\def\@settitle{\begin{flushleft}%
  \baselineskip14\p@\relax
    \normalfont\Large\bf
  \@title
  \end{flushleft}%
}
\def\section{\@startsection{section}{1}%
  \z@{.7\linespacing\@plus\linespacing}{.5\linespacing}%
  {\normalfont\bf}}
\def\@setauthors{%
  \begingroup
  \def\thanks{\protect\thanks@warning}%
  \trivlist
  \footnotesize \@topsep30\p@\relax
  \advance\@topsep by -\baselineskip
  \item\relax
  \author@andify\authors
  \def\\{\protect\linebreak}%
  \larger\sc\authors%
  \ifx\@empty\contribs
  \else
    ,\penalty-3 \space \@setcontribs
    \@closetoccontribs
  \fi
  \endtrivlist
  \endgroup
}
\theoremstyle{plain}
\newtheorem{theorem}{Theorem}[section]
\newtheorem{lemma}[theorem]{Lemma}
\newtheorem{proposition}[theorem]{Proposition}
\newtheorem{corollary}[theorem]{Corollary}
\theoremstyle{definition}
\newtheorem{definition}[theorem]{Definition}
\newtheorem{remark}[theorem]{Remark}
\newtheorem{example}[theorem]{Example}
\newcommand{\CC}{\mathbb{C}}
\newcommand{\ZZ}{\mathbb{Z}}
\newcommand{\QQ}{\mathbb{Q}}
\newcommand{\NN}{\mathbb{N}}
\newcommand{\fg}{\mathfrak{g}}
\DeclareMathOperator{\Hom}{Hom}
\DeclareMathOperator{\End}{End}
\DeclareMathOperator{\Span}{span}
\newcommand{\Chi}{{\mathcal X}}
\newcommand{\A}{{\mathbf A}}
\newcommand{\V}{{\mathcal V}}
\DeclareMathOperator{\rk}{rk}
\newcommand{\SL}{\mathrm{SL}}
\newcommand{\GL}{\mathrm{GL}}
\newcommand{\Sp}{\mathrm{Sp}}
\newcommand{\SO}{\mathrm{SO}}
\newcommand{\Spin}{\mathrm{Spin}}
\newcommand{\PGL}{\mathrm{PGL}}
\newcommand{\PSO}{\mathrm{PSO}}
\newcommand{\inv}{^{-1}}
\newcommand{\<}{\langle}
\renewcommand{\>}{\rangle}
\newcommand{\laso}{\mathfrak{so}}
\newcommand{\bbC}{\mathbb{C}}
\newcommand{\bbZ}{\mathbb{Z}}
\newcommand{\lag}{\mathfrak{g}}
\newcommand{\lah}{\mathfrak{h}}
\newcommand{\lap}{\mathfrak{p}}
\newcommand{\calB}{\mathcal{B}}
\newcommand{\calX}{\mathcal{X}}
\newcommand{\Ind}{\mathrm{Ind}}
\newcommand{\eps}{\epsilon}
\newcommand{\diag}{\mathrm{diag}}
\begin{document}
\title{On the extended weight monoid and its applications to orthogonal polynomials}
\author[GP]{Guido Pezzini}
\author[MvP]{Maarten van Pruijssen}
\email[GP]{pezzini@mat.uniroma1.it}
\email[MvP]{m.vanpruijssen@math.ru.nl}
\subjclass[2000]{14M27,33C45}
\keywords{Matrix-valued orthogonal polynomials, spherical varieties, multiplicity free induction}

\begin{abstract}
Given a connected simply connected semisimple group $G$ and a connected spherical subgroup $K\subseteq G$ we determine the generators of the extended weight monoid of $G/K$, based on the homogeneous spherical datum of $G/K$.

Let $H\subseteq G$ be a reductive subgroup and let $P\subseteq H$ be a parabolic subgroup for which $G/P$ is spherical. A triple $(G,H,P)$ with this property is called multiplicity free system and we determine the generators of the extended weight monoid of $G/P$ explicitly in the cases where $(G,H)$ is strictly indecomposable.

The extended weight monoid of $G/P$ describes the induction from $H$ to $G$ of an irreducible $H$-representation $\pi:H\to\GL(V)$ whose lowest weight is a character of $P$.
The space of regular $\End(V)$-valued functions on $G$ that satisfy $F(h_{1}gh_{2})=\pi(h_{1})F(g)\pi(h_{2})$ for all $h_{1},h_{2}\in H$ and all $g\in G$, is a module over the algebra of $H$-biinvariant regular functions on $G$. We show that under a mild assumption this module is freely and finitely generated. As a consequence the spherical functions of such a type $\pi$ can be described as a family of matrix-valued orthogonal polynomials.
\end{abstract}

\maketitle

\tableofcontents


\section{Introduction}\label{section:Introduction}

\subsection{Context}

Let $G$ be a connected simply connected semisimple group over the field of complex numbers $\bbC$. The interpretation of the zonal spherical functions on a symmetric space $G/H$ as Heckman-Opdam polynomials with geometric parameters establishes an important connection between the representation theory of complex reductive groups on the one hand and orthogonal polynomials on the other. This interpretation is roughly as follows. The zonal spherical functions are matrix coefficients of spherical representations, i.e.~of irreducible representations of $G$ with an $H$-fixed vector. The polynomial nature of these functions can be explained by the decomposition of tensor products of appropriate $G$-representations. The orthogonality comes from Schur-orthogonality and the algebra of differential operators that determines the Heckman-Opdam polynomials up to scaling as simultaneous eigenfunctions is a subquotient of the universal enveloping algebra of $\lag=\mathrm{Lie}(G)$ that is understood via the Harish-Chandra isomorphism as described by Heckman in \cite[Ch.5]{MR1313912}.

The heuristic argument behind this nice description of the zonal spherical functions is the multiplicity freeness of the induction of the trivial $H$-representation to $G$. It accounts for two important facts, namely that the space of $H$-biinvariant regular functions on $G$ is spanned by the zonal spherical functions, and that the algebra of differential operators mentioned above is commutative.

In the context of multiplicity free systems, a notion we recall below, there are several examples in which more general spherical functions are described as polynomials, see e.g.~\cite{MR1883412,MR783984,MR3550926,MR3006173,MR3085105,MR4053617,MR3801483}.
In these cases, instead of the trivial $H$-representation, another irreducible $H$-representation with suitable properties is considered. The polynomials are now vector-valued orthogonal polynomials and in the examples they can be grouped into matrix-valued orthogonal polynomials. As such, they enjoy properties similar to those of the Heckman-Opdam polynomials. 
For example, the polynomials are simultaneous eigenfunctions of a commutative algebra of differential operators. In the case that $(G,H)$ is symmetric, the polynomials are even determined by this property and we obtain a system of hypergeometric differential equations in the sense of Heckman \cite[Definition 4.2]{MR1313912}.

One of the earliest examples of these vector-valued orthogonal polynomials was found by Koornwinder \cite{MR783984}, related to the pair $(\SL(2)\times\SL(2),\diag(\SL(2)))$. Gr\"unbaum, Pacharoni and Tirao found examples related to the pair $(\SL(n+1),\GL(n))$ in \cite{MR1883412} and subsequent papers, essentially by investigating differential operators. Koelink, van Pruijssen and Rom\'an \cite{MR3550926,MR3006173} added, among other things, the differential properties to Koornwinder's original class of examples. Heckman and van Pruijssen generalized both classes of examples to a general construction of vector-valued orthogonal polynomials for compact Gelfand pairs of rank one.
The existence of multivariable vector-valued orthogonal polynomials with properties similar to the Heckman-Opdam polynomials, was shown by van Pruijssen \cite{MR3801483}, after which a concrete example was worked out by Koelink, van Pruijssen and Rom\'an \cite{MR4053617}.

The main difficulty for the vector-valued polynomials is the determination of all irreducible $G$-representations that contain an appropriate irreducible $H$-representation upon restriction. This problem can also be thought of as inverting the branching law from $G$ to $H$ for appropriate irreducible $H$-representations. 

In this paper we solve this problem in the generality of multiplicity free systems and for each strictly indecomposable multiplicity free system we provide an explicit solution for the inverse branching law for the appropriate irreducible $H$-representations. Moreover, we show that under mild conditions the spherical functions can be described by families of vector-valued orthogonal polynomials, possibly in several variables, and that they can be grouped into families of matrix-valued orthogonal polynomials. To establish these results we use spherical varieties and in particular the combinatorics of homogeneous spherical data.

\subsection{Results}
We fix a maximal torus and a Borel subgroup $T_{G}\subseteq B_{G}\subseteq G$, and let $K$ be a closed subgroup of $G$. We assume that $K$ is connected and {\em spherical}, i.e., a Borel subgroup of $G$ has an open orbit on $G/K$. The definition of a spherical variety along with some facts, notations and conventions will be recalled in Sections \ref{Section: Notations} and \ref{Section: Spherical varieties}. We will also recall the notion of {\em homogeneous spherical data}, which are combinatorial objects used in the classification of such varieties.

The {\em extended weight monoid} $\widetilde{\Gamma}(G/K)$ of the homogeneous space $G/K$ is the set of all pairs $(\omega,\chi)$, where $\omega$ is a character of $B_G$ and $\chi$ is a character of $K$, for which there exists a non-zero function $f\in\bbC[G]$ satisfying
$$f(b\inv gp)=\omega(b)f(g)\chi(p),\quad\mbox{for all $(b,g,p)\in B_{G}\times G\times K$},$$
see Definition \ref{def: EWS}. The assumption that $G$ be simply connected implies that $\widetilde{\Gamma}(G/K)$ is a freely and finitely generated monoid (see Proposition~\ref{prop:generators} below).

Our \textbf{first main result} is Theorem \ref{thm:howto}, where we give a general combinatorial procedure to derive the generators of $\widetilde{\Gamma}(G/K)$ from the homogeneous spherical datum of $G/K$. Computations of extended weight monoids are found in the literature for particular spherical subgroups, and carried out with ad-hoc methods. Reductive ones are dealt with by Kr\"amer~\cite{MR528837} (for $G$ simple) and by Avdeev~\cite{MR2779106} (for $G$ not simple); the approach is case-by-case using representation theory and explicit computations. The case of $K$ solvable is found in Avdeev~\cite{MR3366923}.

While this paper was being finalized, Avdeev's preprint~\cite{avdeev-preprint} appeared, where another way to compute the extended weight monoid of spherical homogeneous spaces $G/K$ in general is proposed. It is based on a description of $K$ as a subgroup of a suitably chosen parabolic subgroup $P$ of $G$. There is some overlap with our techniques: in particular, we compute in the same way some generators coming easily from the natural morphism $G/K\to G/P$. For the other generators our approach is different, and uses only the homogeneous spherical datum of $G/K$.

Let now $H\subseteq G$ be a reductive subgroup and $P\subseteq H$ a parabolic subgroup. The triple $(G,H,P)$ is called a {\em multiplicity free system} if $P$ is a spherical subgroup of $G$. This notion is related to the representation theory of $G$ and $H$ in the following way. We choose a maximal torus and a Borel subgroup $T_{H}\subseteq B_{H}\subseteq H$ in a compatible way, meaning that $B_{H}\subseteq B_{G}$ and $T_{H}\subseteq T_{G}$. The opposite Borel subgroups with respect to $T_{G}$ and $T_{H}$ are denoted by $B_{G}^{-}$ and $B_{H}^{-}$ respectively. Furthermore we arrange $B_{H}^{-}\subseteq P$. Let $\calX(T_{G})$ denote the character group of $T_{G}$ and denote by $\calX_{+}(T_{G})$ the monoid of dominant characters with respect to $B_{G}$. We recall that they are also called dominant weights and they parametrize the equivalence classes of the irreducible $G$-representations, by taking the highest weight appearing in the representation. In the same way the dominant characters in $\calX_{+}(T_{H})\subseteq\calX(T_{H})$ parametrize the equivalence classes of irreducible $H$-representations. Let $\calX(P)\subseteq\calX(T_{H})$ denote the group of characters of $T_{H}$ that can be extended to a character of $P$. If $\chi\in\calX_{+}(T_{H})$ then we denote the corresponding irreducible $H$-representation of highest weight $\chi$ by $\pi_{H,\chi}:H\to\GL(V_{H,\chi})$.

Let $\chi\in\calX_{+}(T_{H})\cap\calX(P)$. Following the Borel-Weil Theorem we see that $\Ind^{H}_{P}(\chi)$ is an irreducible $H$-representation of lowest weight $w_{0}^{H}(\chi)$, where $w_{0}^{H}\in W_{H}=N_{H}(T_{H})/T_{H}$ is the longest Weyl group element, and hence $\Ind^{H}_{P}(\chi)=\pi_{H,\chi}$. Induction in stages yields $\Ind^{G}_{H}(\pi_{H,\chi})=\Ind^{G}_{P}(\chi)$ and the latter space is a $G$-representation whose decomposition into irreducible $G$-representations is multiplicity free. Indeed, $\Ind^{G}_{P}(\chi)$ is the space of regular sections of a $G$-line bundle over the space $G/P$. Since $G/P$ is a spherical variety, the decomposition is multiplicity free, see e.g.~\cite[Thm.25.1]{MR2797018}. This means in particular that $\dim\Hom_{H}(V_{H,\chi},V_{G,\lambda})\le1$ for all $\lambda\in\calX_{+}(T_{G})$. In this context it is natural to study the set $\Chi(T_{G};T_{H},\chi):=\{\lambda\in\calX_{+}(T_{G}):\dim\Hom_{H}(V_{H,\chi},V_{G,\lambda})=1\}$, which we call the $\chi$-well. It is directly related to the extended weight monoid, indeed $\lambda\in\Chi(T_{G};T_{H},\chi)$ if and only if $(\lambda,-\chi)\in\widetilde{\Gamma}(G/P)$. 

If $(G,H,P)$ is a multiplicity free system with $G$ not simply connected and $\chi\in\calX_{+}(T_{H})\cap\calX(P)$, then $\Chi(T_{G};T_{H},\chi)$ can be obtained from $\Chi(T_{\widetilde{G}};T_{\widetilde{H}},\widetilde{\chi})$, where $\psi:\widetilde{G}\to G$ is the simply connected cover, $\widetilde{H}$ is the connected component of $\psi^{-1}(H)$ containing the identity and $\widetilde{\chi}$ is the highest weight of the irreducible representation $\pi^{H}_{\chi}\circ(\psi|_{\widetilde{H}})$. Indeed, $\Chi(T_{G};T_{H},\chi)$ consists of weights of $\Chi(T_{\widetilde{G}};T_{\widetilde{H}},\widetilde{\chi})$ for which the corresponding irreducible representation descends to an irreducible representation of $G$. Hence it is not a restriction to assume $G$ simply connected.

A multiplicity free system $(G,H,P)$ is called {\em strictly indecomposable} if $(G,H)$ is strictly indecomposable, meaning that pair $(G,[H,H])$ is not isogenous to a product $(G_{1}\times G_{2},H_{1}\times H_{2})$ with $H_{i}\subsetneq G_{i}$ non-trivial. The strictly indecomposable multiplicity free systems $(G,H,P)$ with $G$ semisimple are classified by He, Nishiyama, Ochiai, and Oshima in \cite{MR3127988} for $(G,H)$ symmetric and by van Pruijssen in \cite{MR3801483} for $(G,H)$ spherical but not symmetric. Our \textbf{second main result} is the calculation of the generators for each strictly indecomposable multiplicity free system $(G,H,P)$ with $G$ semisimple and simply connected, in Appendices~\ref{s:sym} and~\ref{s:sph}.

In Section \ref{s:ops} we recall the notion of a zonal spherical function and of a spherical function of type $\chi\in\calX_{+}(T_{H})\cap\calX(P)$ for a multiplicity free system $(G,H,P)$. The zonal spherical functions constitute a basis of the vector space $E^{0}:=\bbC[G]^{H\times H}$ of $H$-biinvariant regular functions on $G$. Similarly, the spherical functions of type $\chi$ constitute a basis of the vector space
$$E^{\chi}:=\left(\bbC[G]\otimes\End(V_{H,\chi})\right)^{H\times H}.$$
Let $(G,H,P)$ be a strictly indecomposable multiplicity free system with $G$ simply connected and assume that $E^{0}$ is a polynomial algebra. This is a mild assumption, because it is satisfied in all cases $(G,H,P)$ where $P\subset H$ is proper. Under this assumption, our \textbf{third main result} is that $E^{\chi}$, with $\chi\in\calX_{+}(T_{H})\cap\calX(P)$, is freely and finitely generated as module over $E^{0}$. As a consequence the spherical functions of type $\chi$ can be described as a family of vector-valued orthogonal polynomials. Moreover, these polynomials can be grouped together in a family of matrix-valued orthogonal polynomials. As such, the families of polynomials fit into the framework of \cite{MR4053617}, where also the differential properties are discussed and where the example $(\SL(n)\times\SL(n),\diag(\SL(n)),\diag(P))$, with $P\subseteq\SL(n)$ a parabolic with Levi subgroup isomorphic to $\GL(n-1)$, is discussed in great detail. 

\subsection{Acknowledgements}
We thank Erik Koelink and Friedrich Knop for useful remarks and corrections on a previous version of the paper.


\section{Notations}\label{Section: Notations}

We fix a simply connected semisimple group $G$ over $\CC$. As above, we fix a choice of a Borel subgroup $B_G\subseteq G$ and a maximal torus $T_G\subseteq B_G$, we denote by $S_G$ the corresponding set of simple roots, by $B^-_G$ the opposite Borel subgroup of $B$, and by $W_G$ the Weyl group of $G$. When no confusion arises, for simplicity they will be also denoted by $B$, $T$, $S$, $B^-$, $W$ respectively.

In general, if $K$ is an algebraic group, the group of its characters is denoted by $\Chi(K)$, and the connected component of $K$ containing the neutral element by $K^\circ$. The commutator of $K$ is denoted by $(K,K)$, the center by $Z(K)$, and all subgroups of $K$ are assumed to be closed. The group $\CC\smallsetminus\{0\}$ will be denoted by $\CC^\times$.

We recall the following definition.

\begin{definition}\label{def:vr}
A subgroup $L$ of a connected reductive group $M$ is called {\em very reductive} if $L$ is not contained in any proper parabolic subgroup of $M$.
\end{definition}


\section{Spherical varieties}\label{Section: Spherical varieties}

In this section we recall some basic notations and facts on spherical varieties.

\begin{definition}
A {\em spherical $G$-variety} (or simply a {\em spherical variety}) is a normal irreducible $G$-variety having an open $B$-orbit. If $K\subseteq G$ is a closed subgroup such that $G/K$ is a spherical variety, then $K$ is called a {\em spherical subgroup} of $G$.
\end{definition}

Let $X$ be a spherical variety. We denote by $\CC(X)^{(B)}$ the multiplicative subgroup of non-zero rational functions on $X$ that are eigenvectors for the action of $B$-translation. The eigenvalues of such eigenvectors form a lattice of characters of $B$, denoted by $\Xi(X)$. Its rank is by definition the {\em rank} of $X$, and we will denote the vector space $\Xi(X)\otimes_\ZZ\QQ$ simply by $\Xi(X)_\QQ$. We also associate with $X$ the vector space
\[
N(X) =  \Hom_\ZZ(\Xi(X),\QQ).
\]
The natural pairing between $\Xi(X)$ (or $\Xi(X)_\QQ$) and $N(X)$ is denoted by $\langle-,-\rangle$.

By a {\em discrete valuation} on $\CC(X)$ we mean a map $v\colon\CC(X)\smallsetminus\{0\}\to \QQ$ such that $v(fg) = v(f)+v(g)$ for all $f,g\in \CC(X)\smallsetminus\{0\}$, such that $v(f+g)\geq \min\{v(f),v(g)\}$ whenever $f,g,f+g\in \CC(X)\smallsetminus\{0\}$, such that $v(f)=0$ if $f$ is constant, and such that the image of $v$ is a discrete (additive) subgroup of $\QQ$.

Any discrete valuation $v$ of $\CC(X)$ can be restricted to $\CC(X)^{(B)}$. Since $X$ is spherical, this yields a well-defined element $\rho(v)$ of $N(X)$, by identifying $\Xi(X)$ with the multiplicative group $\CC(X)^{(B)}$ modulo the constant functions. This applies in particular to the valuation associated with any $B$-stable prime divisor $D$ of $X$, and in this case we denote simply by $\rho(D)$ (or $\rho_X(D)$) the corresponding element of $N(X)$.

The $B$-stable but not $G$-stable prime divisors of $X$ are called {\em colors}, and their set is denoted by $\Delta(X)$. Given a color $D\in \Delta(X)$ and a simple root $\alpha\in S$, we say that {\em $\alpha$ moves $D$} if $D$ is not stable under the minimal parabolic subgroup of $G$ strictly containing $B$ corresponding to $\alpha$.

There exists a minimal set $\Sigma(X)$ of primitive elements of $\Xi(X)$ such that the set
\[
\V(X)=\{ \eta\in N(X)\;|\; \langle \eta,\sigma\rangle \leq 0 \;\forall\sigma\in\Sigma(X)\}
\]
is equal to the set of the elements $\rho(\nu)$ for $\nu$ any $G$-stable discrete valuation of $\CC(X)$. The elements of $\Sigma(X)$ are called the {\em spherical roots} of $X$.

A simple root $\alpha\in S$ can move up to two colors of $X$, and it moves two colors if and only if $\alpha\in S\cap \Sigma(X)$. The set of simple roots moving no color is denoted by $S^p(X)$, and the set of colors moved by some simple roots in $S\cap \Sigma(X)$ is denoted by $\A(X)$.

The {\em homogeneous spherical datum} of $X$ is defined as the quintuple
\[
(S^p(X),\Sigma(X),\A(X),\Xi(X),\rho_X\colon \Delta(X)\to N(X)).
\]
The pairing $\ZZ\Delta(X)\times\Xi(X)\to \ZZ$ induced by $(D,\xi) \mapsto \langle\rho_X(D),\xi\rangle$ for $D\in \Delta(X)$ and $\xi\in\Xi(X)$ is also denoted by $c(-,-)$ (or $c_X(-,-)$), and it is called the {\em Cartan pairing} of $X$.

We recall that spherical homogeneous spaces are classified up to $G$-equivariant isomorphisms by their homogeneous spherical data, which can be defined as purely combinatorial objects satisfying the axioms given by Luna in~\cite[Section~2]{MR1896179}.

Many of the spherical subgroups encountered in this paper are {\em wonderful}, so we recall the definition of this notion.

\begin{definition}
A spherical subgroup $K$ of $G$ is {\em wonderful} if $\Sigma(G/K)$ is a basis of the lattice $\Xi(G/K)$.
\end{definition}

\section{Spherical closure and morphisms}

In this section we recall some combinatorial constructions and results on equivariant morphisms between spherical homogeneous spaces. Our goal is to prove Proposition~\ref{prop:morphisms} and Corollary~\ref{cor:morphisms} below, which are reformulations of well-known facts. They are formulated with the specific goal of being easy to apply in examples, when computing extended weight monoids of specific spherical homogeneous spaces, so to require as few verifications as possible. For this reason, in this form they are not found in the literature, and we need to recall several facts in order to show how our statements derive from known results.

We first need the following standard definition.

\begin{definition}
Let $K$ be a spherical subgroup of $G$. The {\em spherical closure} $\overline K$ is defined as the kernel of the action of the normalizer $N_GK$ on $\Delta(G/K)$, induced by the natural action of $N_GK/K$ on $G/K$ by $G$-equivariant automorphisms. The {\em spherically closed spherical roots} of $G/K$, or of any spherical variety $X$ with open $G$-orbit $G/K$, are the spherical roots of $G/\overline K$. Their set is denoted by $\Sigma^{sc}(X)$.
\end{definition}

\begin{remark}\label{rem:sphericalclosure}
\begin{enumerate}
\item From its definition it is obvious that $\overline K$ contains $K$, hence $\overline K$ is a spherical subgroup of $G$. Also, for any subgroup $J$ such that $K\subseteq J\subseteq \overline K$, the spherical homogeneous spaces $G/K$ and $G/J$ ``have the same colors'', in the sense that the natural morphism $G/K\to G/J$ induces a bijection $\Delta(G/J)\to\Delta(G/K)$ compatible with the Cartan pairing, and respecting the property of a color of being moved by any given simple root.
\item\label{rem:sphericalclosure:doubling} The spherically closed spherical roots of a spherical variety $X$ are easily deduced from $\Sigma(X)$. Indeed, the elements of $\Sigma^{sc}(X)$ are equal to the elements of $\Sigma(X)$, except for the fact that any $\sigma\in\Sigma(X)$ is replaced by $2\sigma$, if $\sigma\notin S$ and $2\sigma$ is a spherical root of some spherical $G$-variety. For a proof of this fact see Pezzini and Van Steirteghem \cite[Proposition~2.7]{PVS}, where also a more explicit combinatorial description of $\Sigma^{sc}(X)$ can be found.
\item The spherical closure of any spherical subgroup is wonderful, thanks to a deep result by Knop~\cite[Corollary~7.6]{MR1311823}.
\end{enumerate}
\end{remark}

We also recall the following proposition, part of Luna's theory of {\em augmentations}.

\begin{proposition}[{\cite[Lemme~6.3.1]{MR1896179}, see also~\cite[Section~3]{MR3317802}}]\label{prop:augmentations}
Let $K$ be a spherical subgroup of $G$. Then $J\mapsto \Xi(G/J)$ is a bijection between the set of subgroups $J$ of $G$ satisfying $K\subseteq J\subseteq \overline K$ and the set of lattices $\Xi'$ such that $\Xi(G/K)\supseteq \Xi'\supseteq\Xi(G/\overline K)=\Span_\ZZ(\Sigma^{sc}(G/K))$.
\end{proposition}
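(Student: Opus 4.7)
The plan is to reduce the bijection to Pontryagin duality for finite abelian groups, by identifying the quotient $\Xi(G/K)/\Xi(G/\overline K)$ with the character group $\Chi(\overline K/K)$, and identifying the set of intermediate subgroups $K\subseteq J\subseteq \overline K$ with the set of subgroups of $\overline K/K$.

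First I would check that $J\mapsto \Xi(G/J)$ is well-defined as a map into the stated set of lattices. The $G$-equivariant projections $G/K\to G/J\to G/\overline K$ induce pullbacks on $B$-semi-invariant rational functions, producing the containments $\Xi(G/\overline K)\subseteq \Xi(G/J)\subseteq \Xi(G/K)$. The equality $\Xi(G/\overline K)=\Span_{\ZZ}(\Sigma^{sc}(G/K))$ is immediate from the wonderfulness of $\overline K$ (Remark~\ref{rem:sphericalclosure}(3)), since by definition the spherical roots of a wonderful subgroup form a basis of its weight lattice, and $\Sigma(G/\overline K)=\Sigma^{sc}(G/K)$.

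Next, since $\overline K$ is wonderful, $\overline K/K$ is finite; moreover it embeds into $N_G(K)/K$, which is diagonalizable by a theorem of Knop, so $\overline K/K$ is finite abelian. The subgroups $J$ with $K\subseteq J\subseteq \overline K$ then correspond bijectively to subgroups $L=J/K\subseteq \overline K/K$. The key step is to construct a natural isomorphism $\Xi(G/K)/\Xi(G/\overline K)\xrightarrow{\sim}\Chi(\overline K/K)$. The action of $\overline K/K$ on $G/K$ by $G$-equivariant automorphisms commutes with the $B$-action and hence preserves each weight space of $\CC(G/K)^{(B)}$; by sphericality each such weight space is one-dimensional over $\CC$, so the action on the line of weight $\omega$ is given by a character $\chi_\omega\in\Chi(\overline K/K)$. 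The assignment $\omega\mapsto\chi_\omega$ is a group homomorphism whose kernel consists precisely of those $\omega$ whose eigenvector descends to $G/\overline K$, namely $\Xi(G/\overline K)$, giving an injection $\Xi(G/K)/\Xi(G/\overline K)\hookrightarrow \Chi(\overline K/K)$.

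The main obstacle is the surjectivity of this last map: one needs every character of $\overline K/K$ to be realized by some $B$-semi-invariant on $G/K$. I would obtain it by comparing cardinalities, showing that $[\Xi(G/K):\Xi(G/\overline K)]=|\overline K/K|$, which can be reduced by a composition series argument to the case where $\overline K/K$ has prime order, or alternatively by identifying characters with isomorphism classes of $G$-equivariant line bundles on $G/\overline K$ that become trivial after pullback to $G/K$. Once the isomorphism is established, the sublattice of $L$-invariant weights in $\Xi(G/K)$ is exactly the annihilator of $L$ under the duality pairing, and this coincides with $\Xi(G/J)$ for $J$ the preimage of $L$ in $\overline K$. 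Thus $J\mapsto \Xi(G/J)$ realizes the bijection between intermediate subgroups and lattices, as claimed.
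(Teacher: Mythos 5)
First, note that the paper does not prove this proposition at all: it is quoted from Luna \cite{MR1896179} with a pointer to \cite{MR3317802}, so there is no in-paper argument to compare against. Your overall strategy --- letting $\overline K/K$ act on the one-dimensional $B$-weight spaces of $\CC(G/K)$, obtaining a homomorphism $\Xi(G/K)\to\Chi(\overline K/K)$ with kernel $\Xi(G/\overline K)$, identifying $\Xi(G/J)$ with the annihilator of $J/K$, and concluding by duality --- is indeed the standard route and is correct in outline.

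There is, however, a genuine error that breaks your proof of the key surjectivity step: $\overline K/K$ is \emph{not} finite in general. For example, take $G=\SL(2)$ and $K=U$ a maximal unipotent subgroup: then $N_GK=B$, the single color of $G/U$ is preserved by the right $T$-action, so $\overline K=B$ and $\overline K/K\cong\CC^\times$; correspondingly $\Xi(G/K)/\Xi(G/\overline K)\cong\ZZ$. (Wonderfulness of $\overline K$ only says $\Sigma(G/\overline K)$ is a basis of $\Xi(G/\overline K)$; it puts no bound on $\overline K/K$.) Consequently the reduction to Pontryagin duality for \emph{finite} abelian groups, and in particular the argument for surjectivity of $\Xi(G/K)/\Xi(G/\overline K)\hookrightarrow\Chi(\overline K/K)$ by comparing cardinalities and running a composition series of prime-order steps, cannot work as stated. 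The repair is to treat $\overline K/K$ as a closed subgroup of the \emph{diagonalizable} group $N_GK/K$ and use the anti-equivalence between diagonalizable groups and finitely generated abelian groups: surjectivity of $\Xi(G/K)\to\Chi(\overline K/K)$ is equivalent to the homomorphism $\overline K/K\to\Hom_\ZZ(\Xi(G/K),\CC^\times)$ being a closed immersion, i.e.\ to the faithfulness of the action of $N_GK/K$ on $\CC(G/K)^{(B)}$ modulo constants. That faithfulness (due to Knop, and also underlying the diagonalizability you invoke) is the substantive input; your proposal never isolates it, and even in the finite case your prime-order reduction would still need it to show each step contributes a nontrivial character. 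With that ingredient supplied, the rest of your argument (annihilator duality between closed subgroups of $\overline K/K$ and subgroups of its character group) goes through for infinite $\overline K/K$ as well.
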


We come to equivariant morphism between spherical varieties. The following definitions and proposition are well-known.

\begin{definition}
A subset $\Delta'\subseteq \Delta(X)$, is called {\em distinguished} if there is a linear combination of elements of $\rho_X(\Delta')$, with non-negative rational coefficients, that is non-negative on all spherical roots of $X$. In addition, $\Delta'$ is called {\em parabolic} if there is such a linear combination that is strictly positive on all spherical roots.
\end{definition}

\begin{definition}\label{def:morphisms}
Let $\varphi\colon X\to Y$ be a dominant $G$-equivariant morphism with connected fibers between two spherical $G$-varieties $X$, $Y$. We denote by $\Delta_\varphi$ the set of colors of $X$ mapped dominantly to $Y$.
\end{definition}

\begin{proposition}\label{prop:morphisms}
Let $\varphi$, $X$, and $Y$ be as in Definition~\ref{def:morphisms}. Then $\Delta_\varphi$ is a distinguished subset of $\Delta(X)$, and we have
\begin{equation}\label{eq:Sigmaquotient}
\Span_{\QQ_{\geq0}}\Sigma(Y) = \left(\Span_{\QQ_{\geq0}}\Sigma(X)\right)\cap\left(\bigcap_{D\in\Delta_\varphi} \ker\rho_X(D)\right).
\end{equation}
If in addition $Y$ is homogeneous and complete, then $\Delta_\varphi$ is parabolic. Finally, given $X$ homogeneous and a distinguished subset $\Delta'$ of $\Delta(X)$, there exist unique (up to $G$-equivariant isomorphism) $Y$ and $\varphi$ such that $Y$ is a homogeneous $G$-variety $Y$ with $\Span_\QQ\Sigma(Y)=\Xi(Y)_\QQ$, and $\varphi\colon X\to Y$ is a $G$-equivariant morphism with connected fibers such that $\Delta_\varphi=\Delta'$. If $\Delta'$ is parabolic, then $Y$ is complete.
\end{proposition}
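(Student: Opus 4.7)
This proposition collects standard results from the Luna-Vust theory of spherical embeddings and the Luna-Knop classification of equivariant morphisms with connected fibers; I would deduce it from Knop~\cite{MR1311823} and Luna~\cite{MR1896179}. My plan is to first reduce to the homogeneous case, then prove the direct statements via Knop's extension theorem for $G$-valuations, and finally handle the converse via the augmentation bijection of Proposition~\ref{prop:augmentations}.

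For the reduction, I would restrict $\varphi$ to the open $G$-orbits $X^\circ\subseteq X$ and $Y^\circ\subseteq Y$; after choosing compatible basepoints write $X^\circ=G/K$ and $Y^\circ=G/J$ with $K\subseteq J$, where $J/K$ is connected because $\varphi$ has connected fibers. Since $\Xi$, $\Sigma$, $\Delta$ and $\rho$ of $X$ (resp.\ $Y$) coincide with those of the open $G$-orbit, both the distinguishedness of $\Delta_\varphi$ and formula~\eqref{eq:Sigmaquotient} reduce to the homogeneous setting. In this setting pullback of $B$-semi-invariants embeds $\Xi(Y)_\QQ\hookrightarrow\Xi(X)_\QQ$; for $D\in\Delta_\varphi$ and $\chi\in\Xi(Y)$ represented by $f\in\CC(Y)^{(B)}$, the divisor $\Divf(f)$ is a proper subvariety of $Y$ and hence cannot contain the dense set $\varphi(D)$, so $v_D(\varphi^*f)=\langle\rho_X(D),\chi\rangle=0$; this gives $\Xi(Y)_\QQ\subseteq\bigcap_{D\in\Delta_\varphi}\ker\rho_X(D)$. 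By Knop's extension theorem for $G$-valuations~\cite[Cor.~4.2]{MR1311823}, the projection $N(X)_\QQ\twoheadrightarrow N(Y)_\QQ$ dual to this inclusion sends $\V(X)$ onto $\V(Y)$; dualizing yields $\Span_{\QQ_{\geq 0}}\Sigma(Y)=\Span_{\QQ_{\geq 0}}\Sigma(X)\cap\Xi(Y)_\QQ$. Combining the last two observations with the automatic inclusion $\Span_{\QQ_{\geq 0}}\Sigma(X)\subseteq\Xi(G/\overline K)_\QQ\subseteq\Xi(Y)_\QQ$ (using Remark~\ref{rem:sphericalclosure}) yields formula~\eqref{eq:Sigmaquotient}. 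Distinguishedness of $\Delta_\varphi$, and parabolicity in the case when $Y$ is complete and homogeneous, then follow from the standard Luna-Vust correspondence by lifting suitable elements of $\V(Y)$ back to $\V(X)$ and reading off a $\QQ_{\geq 0}$-combination of the $\rho_X(D)$, $D\in\Delta_\varphi$, that is non-negative (respectively, strictly positive) on all spherical roots of $X$.

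For the converse, given $X=G/K$ homogeneous and a distinguished $\Delta'\subseteq\Delta(X)$, the Luna-Vust correspondence produces a sublattice $\Xi'\subseteq\Xi(X)$ associated to $\Delta'$ with $\Xi'\supseteq\Xi(G/\overline K)=\Span_\ZZ\Sigma^{sc}(X)$; Proposition~\ref{prop:augmentations} then yields a unique intermediate subgroup $K\subseteq J\subseteq\overline K$ with $\Xi(G/J)=\Xi'$, and $Y=G/J$ is the required homogeneous target, with the condition $\Span_\QQ\Sigma(Y)=\Xi(Y)_\QQ$ built into the construction. Uniqueness follows from the same bijection. If moreover $\Delta'$ is parabolic, a complete embedding of $Y$ is produced by the colored fan with single maximal colored cone obtained from the strictly positive combination guaranteed by parabolicity, via the standard completeness criterion for colored fans. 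The main obstacle is the precise matching between distinguished subsets and intermediate sublattices in the converse direction; here both Knop's extension theorem and the Luna-Vust cone-to-subgroup correspondence are needed in a careful interplay.
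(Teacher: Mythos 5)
Your reduction to the homogeneous case, the inclusion $\Xi(Y)_\QQ\subseteq\bigcap_{D\in\Delta_\varphi}\ker\rho_X(D)$ via pull-back of divisors, and the identity $\Span_{\QQ_{\geq0}}\Sigma(Y)=\Span_{\QQ_{\geq0}}\Sigma(X)\cap\Xi(Y)_\QQ$ (from the surjectivity of $\V(X)\to\V(Y)$) are all sound and match the paper. But these facts only give the inclusion ``$\subseteq$'' in~\eqref{eq:Sigmaquotient}. For ``$\supseteq$'' you must show that an element of $\Span_{\QQ_{\geq0}}\Sigma(X)$ annihilated by every $\rho_X(D)$, $D\in\Delta_\varphi$, already lies in $\Xi(Y)_\QQ$, and your justification --- the chain $\Span_{\QQ_{\geq 0}}\Sigma(X)\subseteq\Xi(G/\overline K)_\QQ\subseteq\Xi(Y)_\QQ$ --- is false: $Y=G/J$ with $K\subseteq J$, but $J$ need not be contained in $\overline K$, so there is no inclusion $\Xi(G/\overline K)_\QQ\subseteq\Xi(Y)_\QQ$ (take $Y=G/Q$ a flag variety, where $\Xi(Y)=0$). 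Worse, if that chain held, your formula would collapse to $\Span_{\QQ_{\geq0}}\Sigma(Y)=\Span_{\QQ_{\geq0}}\Sigma(X)$ for \emph{every} quotient, which is absurd. The missing ingredient is precisely the paper's equation~\eqref{eq:XiY}: $\Xi(Y)_\QQ$ is cut out, inside a subspace $U\supseteq\Span_\QQ\Sigma(X)$, by the kernels of the $\rho_X(D)$ for $D\in\Delta_\varphi$; this rests on the fact (Knop, \cite[Lemma~4.3~b)]{MR1131314}) that $N_\varphi$ is spanned by $\rho_X(\Delta_\varphi)$ together with $N_\varphi\cap\V(X)_{\mathrm{lin}}$, which is also what makes $\Delta_\varphi$ distinguished.

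The converse direction has a second, structural problem: Proposition~\ref{prop:augmentations} cannot produce the quotient attached to a distinguished subset $\Delta'$. The augmentation bijection only ranges over subgroups $J$ with $K\subseteq J\subseteq\overline K$, and all such $G/J$ have \emph{the same colors} as $G/K$ (only the lattice varies between $\Xi(G/K)$ and $\Span_\ZZ\Sigma^{sc}(G/K)$); for the corresponding morphisms one always has $\Delta_\varphi=\varnothing$. The quotient by a nonempty $\Delta'$ genuinely kills colors, and the resulting subgroup is in general not contained in $\overline K$ (e.g.\ the quotient of $G/B$ by all its colors is a point). The existence and uniqueness of $(Y,\varphi)$ with $\Delta_\varphi=\Delta'$ and $\Span_\QQ\Sigma(Y)=\Xi(Y)_\QQ$ must instead be taken from Knop's Theorem~4.4 in~\cite{MR1131314}, as the paper does; the augmentation machinery plays no role in this proposition (it is used later, in Corollary~\ref{cor:morphisms}).
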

\begin{proof}
We show how to reduce the proposition to known results in the literature. First, one may assume $X$ and $Y$ homogeneous by replacing them with their respective open $G$-orbits. Also, notice that $\Xi(Y)\subseteq \Xi(X)$ by pulling back $B$-semiinvariant rational functions from $Y$ to $X$.

The subset $\Delta_\varphi$ of $\Delta(X)$ is distinguished because the convex cone generated by $\rho_X(\Delta_\varphi)$ together with $\V(X)\cap N_\varphi$ is a vector subspace of $N(X)$. Here $N_\varphi$ is the vector subspace of $N(X)$ of the elements that are zero on $\Xi(Y)$, and this result follows from~\cite[Lemma~4.3, part~b)]{MR1131314}.

In particular $N_\varphi$ is generated, as a vector space, by $\rho_X(\Delta_\varphi)$ and $N_\varphi\cap \V(X)_{\text{lin}}$, where $\V(X)_{\text{lin}}$ is the linear part of the convex cone $\V(X)$. This implies that
\begin{equation}\label{eq:XiY}
\Xi(Y)_\QQ = U \cap \left(\bigcap_{D\in\Delta_\varphi} \ker\rho_X(D)\right)
\end{equation}
where $U$ is the common kernel of all elements of $N_\varphi\cap \V(X)_{\text{lin}}$. Notice that
\begin{equation}\label{eq:U}
U\supseteq \Span_{\QQ}\Sigma(X).
\end{equation}

The last assertions of Knop's Theorem~4.4 in~\cite{MR1131314} also imply that
\[
\Span_{\QQ_{\geq0}}\Sigma(Y) = \left(\Span_{\QQ_{\geq0}}\Sigma(X)\right)\cap\Xi(Y)_\QQ,
\]
which yields~(\ref{eq:Sigmaquotient}) thanks to~(\ref{eq:XiY}) and~(\ref{eq:U}).

Viceversa, let $X$ be a spherical homogeneous space and $\Delta'$ a distinguished subset of $\Delta(X)$, and denote by $N'$ the vector subspace of $N(X)$ of the elements vanishing on
\[
M'=\left(\Span_{\QQ}\Sigma(X)\right)\cap\left(\bigcap_{D\in\Delta_\varphi} \ker\rho_X(D)\right).
\]
Theorem~4.4 in loc.cit.\ implies that there exist a spherical homogeneous space $Y$ and a dominant $G$-equivariant morphism $\varphi\colon X\to Y$ with connected fibers, such that $N_\varphi=N'$ (i.e.\ $M'=\Xi(Y)_\QQ$), and $\Delta_\varphi=\Delta'$. The same theorem also states that $Y$ and $\varphi$ are uniquely determined by $\Delta_\varphi$ and $N_\varphi$, under the assumptions that $Y$ is homogeneous and that $\varphi$ is dominant with connected fibers. So $Y$ and $\varphi$ are uniquely determined by $\Delta_\varphi$ if we fix the subspace $N_\varphi=N'$.

Finally, it is well-known that a spherical homogeneous space $Y$ is complete if and only if $\Xi(Y)=\{0\}$, which is equivalent to $N'=N(X)$, which is equivalent to $\Delta'$ being parabolic.
\end{proof}

\begin{remark}
\begin{enumerate}
\item Notice that $\Sigma(Y)$ is uniquely determined by the equality~(\ref{eq:Sigmaquotient}) and $\Xi(Y)$, because by definition its elements are primitive in the lattice $\Xi(Y)$.
\item Equality~(\ref{eq:Sigmaquotient}) holds even if $\varphi$ does not have connected fibers. Indeed, in this case we replace $X$ and $Y$ by their open $G$-orbits, say resp.\ $G/K$ and $G/J$ with $K\subseteq J$, and we may assume that $\varphi$ is the natural map $G/K\to G/J$. Let $\widetilde J$ be the union of those connected components of $J$ that intersect $K$, and consider the factorization of $\varphi$ given by the natural morphisms $G/K\to G/\widetilde J\to G/J$. Then we apply equality~(\ref{eq:Sigmaquotient}) to the first morphism (which has connected fibers), and observe that $\widetilde J^\circ= J^\circ$. This implies by~\cite[Theorem~4.4 and Proof of Theorem~6.1]{MR1131314} that $G/J$ and $G/\widetilde J$ have the same spherical roots, up to replacing some elements with positive rational multiples.
\end{enumerate}
\end{remark}

\begin{corollary}\label{cor:morphisms}
Let $K,J$ be spherical subgroups of $G$ and set $X=G/K$, $Y=G/J$. Suppose that $\Span_\QQ\Sigma(Y)=\Xi(Y)_\QQ$, and that there exist
\begin{enumerate}
\item a distinguished subset $\Delta'\subseteq\Delta(X)$ such that equality~(\ref{eq:Sigmaquotient}) holds, where we substitute $\Delta'$ for $\Delta_\varphi$, and
\item a bijection $\psi\colon\Delta(Y)\to \Delta(X)\smallsetminus \Delta'$ such that $\rho_Y(\psi(D))=\rho_X(D)|_{\Xi(Y)_\QQ}$ for all $D\in\Delta(X)\smallsetminus\Delta'$, and such that any simple root moves $D$ if and only if it moves $\psi(D)$.
\end{enumerate}
If $K$ is connected, then $K$ is conjugated in $G$ to a subgroup of $J$.
\end{corollary}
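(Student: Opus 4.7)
My plan is to apply the last part of Proposition~\ref{prop:morphisms} to the distinguished subset $\Delta'\subseteq\Delta(X)$ of hypothesis~(1). This produces a spherical homogeneous $G$-variety $Y^\ast$ with $\Span_\QQ\Sigma(Y^\ast)=\Xi(Y^\ast)_\QQ$, together with a $G$-equivariant morphism $\varphi\colon X\to Y^\ast$ with connected fibers and $\Delta_\varphi=\Delta'$. The core of the argument is then to identify $Y^\ast$ with the given $Y$ using the classification of spherical homogeneous spaces by their homogeneous spherical data; once this is done, the morphism $\varphi$ will directly exhibit $K$ as a subgroup of a conjugate of $J$.

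To match the data of $Y^\ast$ and $Y$ I would argue as follows. Equation~(\ref{eq:Sigmaquotient}) applied to $\varphi$, combined with hypothesis~(1), gives
\[
\Span_{\QQ_{\geq0}}\Sigma(Y^\ast)=\Span_{\QQ_{\geq0}}\Sigma(Y)
\]
inside $\Xi(X)_\QQ$; together with the hypothesis $\Span_\QQ\Sigma=\Xi_\QQ$ imposed on both $Y^\ast$ and $Y$, this yields $\Xi(Y^\ast)_\QQ=\Xi(Y)_\QQ$. The colors of $Y^\ast$ are in canonical bijection with $\Delta(X)\smallsetminus\Delta'$ via $\varphi$, with $\rho_{Y^\ast}$ obtained by restricting $\rho_X$ and with the property of being moved by any given simple root preserved. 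Composing this canonical bijection with $\psi$ from hypothesis~(2) produces a bijection $\Delta(Y)\to\Delta(Y^\ast)$ compatible with $\rho$ and with the action of the simple roots, which forces $\Sigma(Y)=\Sigma(Y^\ast)$, $S^p(Y)=S^p(Y^\ast)$ and $\A(Y)=\A(Y^\ast)$. Invoking the classification of spherical homogeneous spaces by their homogeneous spherical data then yields $Y^\ast\cong Y$ as $G$-varieties.

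Writing $Y^\ast=G/J'$ with $J'$ a conjugate of $J$ in $G$, the morphism $\varphi\colon G/K\to G/J'$ sends $eK$ to some coset $g_0J'$, and $G$-equivariance forces $g_0^{-1}Kg_0\subseteq J'$; composing with a conjugation sending $J'$ to $J$ exhibits $K$ as conjugated in $G$ to a subgroup of $J$.

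The main obstacle I anticipate is matching the lattices $\Xi(Y)$ and $\Xi(Y^\ast)$ (not merely their $\QQ$-spans), which is what is really needed to invoke the classification. The rigidity should come from the integrality of the Cartan pairings $c_Y$ and $c_{Y^\ast}$: hypothesis~(2) ensures that these pairings coincide via $\psi$, and combined with the primitivity of the spherical roots this should pin down the two lattices uniquely once their rational spans agree. The connectedness hypothesis on $K$ enters at the classification step, where (connected) spherical subgroups of $G$ are parametrized up to conjugation by their homogeneous spherical data.
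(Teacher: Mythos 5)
Your skeleton is the same as the paper's: apply the last part of Proposition~\ref{prop:morphisms} to $\Delta'$ to get $\varphi\colon X\to Y^\ast$ with connected fibers and $\Delta_\varphi=\Delta'$, then compare $Y^\ast$ with $Y$. The gap is exactly at the point you flag as "the main obstacle": your proposed resolution does not work. Integrality of the Cartan pairing together with primitivity of the spherical roots does \emph{not} pin down the lattice once the rational spans agree. Indeed, by Proposition~\ref{prop:augmentations} every subgroup $J'$ with $J\subseteq J'\subseteq\overline J$ yields a homogeneous space with the same colors, the same Cartan pairing (Remark~\ref{rem:sphericalclosure}), and the same rational span of the lattice, but with a genuinely different lattice; moreover a spherical root that is primitive in a lattice remains primitive in any sublattice, and a valuation that is integral on a lattice is integral on any sublattice, so neither condition distinguishes these cases. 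Worse, since spherical roots are by definition primitive in the lattice, changing the lattice can rescale them by positive rationals (e.g.\ $\alpha$ versus $2\alpha$ for $\SL(2)/T$ versus $\SL(2)/N(T)$), so you cannot even conclude $\Sigma(Y)=\Sigma(Y^\ast)$ on the nose. Consequently the classification of spherical homogeneous spaces cannot be invoked to get $Y^\ast\cong Y$; with the given hypotheses this isomorphism is simply false in general.

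A symptom of the problem is your last remark: if $Y^\ast\cong Y$ held, the connectedness of $K$ would be superfluous, since $\varphi$ would exhibit $K$ inside a conjugate of $J$ directly. The paper instead concludes only that $Y^\ast$ and $Y$ have the same datum \emph{up to} the lattice (both of full rank in the same $\QQ$-space) and up to positive rational rescaling of spherical roots. It then shows $\Sigma^{sc}(Y)=\Sigma^{sc}(Y^\ast)$, uses Luna's augmentations (Proposition~\ref{prop:augmentations}) to enlarge both $J$ and the stabilizer $\widetilde J\supseteq K$ of $Y^\ast$ to subgroups $J_0$, $\widetilde J_0$ with the common lattice $\Xi'=\Xi(Y)\cap\Xi(Y^\ast)$, applies Gandini's lemma to see that these finite-index lattice changes do not alter the identity components, and finally invokes Losev's uniqueness theorem to conjugate $J_0$ to $\widetilde J_0$. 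This only conjugates $J^\circ$ to $\widetilde J^\circ$, which is why the connectedness of $K$ is needed: it gives $K\subseteq\widetilde J^\circ$. To repair your proof you would need to replace the "integrality pins down the lattice" step with an argument of this kind.
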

\begin{proof}
By Proposition~\ref{prop:morphisms} there exists a spherical homogeneous space $\widetilde Y$ and a dominant $G$-equivariant morphism $\varphi\colon X\to \widetilde Y$ with connected fibers, such that $\Delta_\varphi=\Delta'$ and $\Span_\QQ\Sigma(Y)=\Span_\QQ\Sigma(\widetilde Y)$, hence $\Xi(Y)_\QQ=\Xi(\widetilde Y)_\QQ$. Let $\widetilde J$ be such that $\widetilde Y\cong G/\widetilde J$ and the point $eK\in X$ is sent to $e\widetilde J$ by $\varphi$, i.e.\ $K\subseteq \widetilde J$. Pullling back along $\varphi$ induces a bijection $\widetilde\psi\colon \Delta(\widetilde Y)\to \Delta(X)\smallsetminus\Delta'$ with the same above properties stated for $\psi$, where we replace $\Xi(Y)$ with $\Xi(\widetilde Y)$ (notice that the inverse image of a color of $\widetilde Y$ is a single color of $X$ because $\varphi$ has connected fibers).

We conclude that $\widetilde Y$ and $Y$ have the same homogeneous spherical datum, up to the fact that the lattices $\Xi(Y)$ and $\Xi(\widetilde Y)$ may be different (but both have both maximal rank in the same $\QQ$-vector space), and the spherical roots may differ by the rescaling with positive rational numbers. We claim that then the groups $\widetilde J^\circ$ and $J^\circ$ are conjugated in $G$, which would conclude the proof.

Let us show the claim. First we observe that $\Sigma(Y)\cap S=\Sigma(\widetilde Y)\cap S$, because they are equal up to rescaling, and we have a bijection between the colors of $Y$ and of $\widetilde Y$ respecting the property of being moved by any given simple root. This implies that the simple roots that move two colors each correspond, and that's exactly the simple roots that are spherical roots.

From Remark~\ref{rem:sphericalclosure}, part~(\ref{rem:sphericalclosure:doubling}), we conclude that $\Sigma^{sc}(Y)=\Sigma^{sc}(\widetilde Y)$, in particular $\Sigma^{sc}(Y)$ and $\Sigma^{sc}(\widetilde Y)$ are both contained in $\Xi'=\Xi(\widetilde Y)\cap \Xi(Y)$.

By Proposition~\ref{prop:augmentations}, the inclusions $\Xi(Y)\supseteq\Xi'\supseteq \Span_\ZZ\Sigma^{sc}(Y)$ induces an inclusion $J\subseteq J_0$ where $J_0$ is a subgroup of $G$ such that $G/J_0$ has the same homogeneous spherical datum of $Y$, except for the lattice $\Xi(G/J_0)=\Xi'$, and as above the spherical roots may differ by some rescaling. The same applies to $\widetilde J$, yielding another inclusion $\widetilde J\subseteq \widetilde J_0$ with $\Xi(G/\widetilde J_0)=\Xi'$. Since $\Xi'$ has finite index in both $\Xi(Y)$ and $\Xi(\widetilde J)$, by a lemma of Gandini's~\cite[Lemma~2.4]{MR2785497} we conclude that $J^\circ = J^\circ_0$ and $\widetilde J^\circ = \widetilde J^\circ_0$.

Finally, by Losev's uniqueness theorem~\cite[Theorem~1]{MR2495078}, the subgroups $J_0$ and $\widetilde J_0$ are conjugated in $G$, concluding the proof of the claim.
\end{proof}


\section{The extended weight monoid}\label{s:extwm}

Let $K\subseteq G$ be a subgroup. If a regular function $f\in\CC[G]$ is simultaneously an eigenvector for the left translation action of $B$ on $\CC[G]$ and an eigenvector for the right translation action of $K$, then we denote by $\omega_f\in\Chi(B)$ and $\chi_f\in\Chi(K)$ the $B$-eigenvalue and the $K$-eigenvalue respectively.

\begin{definition}\label{def: EWS}
Let $K$ be a spherical subgroup of $G$. The {\em extended weight monoid}%
\footnote{It is also called the {\em extended weight semigroup}.}
of $X=G/K$ is the set of couples $(\omega_f,\chi_f)$, for $f\in\CC[G]$ varying in the set of $B$-eigenvectors for the left translation action and $K$-eigenvectors for the right translation action. It is denoted by $\widetilde\Gamma(X)$.
\end{definition}

Consider $X=G/K$ as in the definition, and $D\in\Delta(X)$. Since $G$ is simply connected, the inverse image $\widetilde D$ under the natural projection $G\to G/K$ has a global equation $f_D$, unique up to a non-zero multiplicative constant. Therefore it is well defined the element $(\omega_D,\chi_D)=(\omega_{f_D},\chi_{f_D})$.

We recall the following well-known proposition.

\begin{proposition}\label{prop:generators}
Let $X=G/K$ be a spherical homogeneous space. Then the monoid $\widetilde\Gamma(X)$ is freely generated by the elements $(\omega_D,\chi_D)$ for $D$ varying in $\Delta(X)$.
\end{proposition}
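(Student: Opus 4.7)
The plan is to identify $\widetilde\Gamma(X)$ with the monoid of effective $B\times K$-stable divisors on $G$, which is free on the set of $B\times K$-stable prime divisors, namely the $\widetilde D$ for $D\in \Delta(X)$. Two facts about a simply connected semisimple group do all the work: (i) $\Chi(G)=0$, so that $\CC[G]^\times=\CC^\times$; and (ii) $\mathrm{Pic}(G)=0$, so that every Weil divisor on $G$ is the divisor of a global regular function, unique up to a scalar. Fact (ii) is what makes the $f_D$, and hence the elements $(\omega_D,\chi_D)$, well-defined to begin with.

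For surjectivity of the map $\bigoplus_{D\in\Delta(X)}\NN\to \widetilde\Gamma(X)$, $(a_D)\mapsto \sum a_D(\omega_D,\chi_D)$, I would take $f\in\CC[G]$ a $B\times K$-eigenvector with eigenvalue pair $(\omega,\chi)$, and examine $\Divf(f)$. This divisor is effective and $B\times K$-stable, hence a non-negative integer combination of the $B\times K$-stable prime divisors of $G$. Via the projection $\pi\colon G\to G/K$, these correspond to the $B$-stable prime divisors of $G/K$, and since $G/K$ is homogeneous there are no $G$-stable ones, so they are precisely the colors. Thus $\Divf(f)=\sum a_D\widetilde D$ for some $a_D\in\NN$, and then $f/\prod f_D^{a_D}$ is regular and nowhere vanishing on $G$, hence a nonzero constant by (i). This yields $(\omega,\chi)=\sum a_D(\omega_D,\chi_D)$.

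For freeness I would show that for every $(\omega,\chi)\in\widetilde\Gamma(X)$ the corresponding eigenspace in $\CC[G]$ is at most one-dimensional: the ratio of two such eigenvectors is a $B\times K$-invariant rational function on $G$, i.e.\ a $B$-invariant rational function on $X$, and since $X$ is spherical the $B$-action has a dense orbit, forcing this ratio to be constant. If now $\sum a_D(\omega_D,\chi_D)=\sum b_D(\omega_D,\chi_D)$, then $\prod f_D^{a_D}$ and $\prod f_D^{b_D}$ differ only by a scalar, and comparing their divisors on $G$ (which are $\sum a_D\widetilde D$ and $\sum b_D\widetilde D$ respectively, with the $\widetilde D$ distinct prime divisors) gives $a_D=b_D$ for every $D$.

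I do not expect any genuine obstacle: the argument is a clean three-line chain once (i), (ii), and the description of $B$-stable prime divisors on a spherical homogeneous space are in hand. The only point that deserves care is the invocation of $\mathrm{Pic}(G)=0$ for $G$ simply connected semisimple, which is a classical theorem I would simply cite; and, if one allows $K$ non-connected so that $\widetilde D=\pi\inv(D)$ can split into several irreducible components, the same argument still goes through after indexing by the components of $\pi\inv(D)$, but for $K$ connected (as assumed elsewhere in the paper) $\widetilde D$ is already irreducible.
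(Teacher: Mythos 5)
Your proof is correct, and the surjectivity half is essentially identical to the paper's: both decompose the divisor of a $B\times K$-eigenvector $f\in\CC[G]$ as a non-negative integral combination of the pullbacks $\widetilde D$ of the colors (using that a homogeneous space has no $G$-stable divisors and that $\mathrm{Pic}(G)=0$ furnishes the global equations $f_D$), and then conclude $f$ is a scalar multiple of $\prod f_D^{a_D}$ because the quotient is an invertible regular function on the semisimple simply connected group $G$.

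The one place you diverge is freeness. The paper disposes of this by citing Brion's Lemma~2.1.1 of~\cite{MR2326138}, which asserts linear independence of the weights $(\omega_D,\chi_D)$. You instead prove it directly: sphericality gives a dense $B$-orbit on $X$, hence each $(B\times K)$-eigenspace of $\CC[G]$ is at most one-dimensional, so two monomials $\prod f_D^{a_D}$ and $\prod f_D^{b_D}$ with the same eigenvalue pair agree up to a scalar, and comparing divisors (supported on distinct prime divisors $\widetilde D$) forces $a_D=b_D$. This is a perfectly sound and self-contained replacement for the citation, and it is exactly where the hypothesis that $X$ is \emph{spherical} enters; it yields freeness of the monoid directly, whereas Brion's lemma gives the slightly stronger $\ZZ$-linear independence. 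Either route is fine here.
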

\begin{proof}
The weights $(\omega_D,\chi_D)$ are linearly independent, see Brion~\cite[Lemma~2.1.1]{MR2326138}. We prove they generate the monoid $\widetilde\Gamma(X)$, so let $f\in\CC[G]$ be a left-$B$-eigenvector and a right-$K$-eigenvector. Its divisor is invariant under left translation by $B$ and right translation by $K$, therefore it is the pull-back along the projection $G\to G/K$ of a $B$-stable divisor $\delta$ on $G/K$.

Since $B$ is connected, we have that $\delta$ is a linear combination of $B$-stable prime divisors of $G/K$, i.e.\ of colors:
\[
\delta= \sum_{D\in\Delta(X)} n_D D
\]
where $n_D$ is a non-negative integer for all $D$. Then the product $F=\prod_{D\in\Delta(X)}f_D^{n_D}$ is a regular function on $G$ having the same divisor of $f$. It follows that $F$ is a scalar multiple of $f$, yielding $(\omega_F,\chi_F)=(\omega_f,\chi_f)$. It remains to observe that by construction
\[
(\omega_F,\chi_F) = \sum_{D\in\Delta(X)} n_D(\omega_D,\chi_D).
\]
\end{proof}

The following lemma is an easy generalization of a result of Foschi's~\cite{foschi-thesis}. It provides an explicit formula for $\omega_D$ for any color $D$.

\begin{lemma}[{\cite{foschi-thesis}}]\label{lemma:foschi}
Let $X=G/K$ be a spherical homogeneous space. Let $D\in\Delta(X)$ and let $\alpha$ be a simple root moving $D$. Set $\varepsilon=2$ if $2\alpha\in\Sigma(X)$, or $\varepsilon=1$ otherwise. Then
\[
\omega_D = \varepsilon\sum_{\beta} \omega_\beta,
\]
where $\beta$ varies in the set of simple roots that move $D$.
\end{lemma}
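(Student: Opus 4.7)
The plan is to compute the coefficient $\langle\omega_D,\gamma^\vee\rangle$ for every simple root $\gamma\in S$ and thereby recover $\omega_D$ as $\sum_\gamma\langle\omega_D,\gamma^\vee\rangle\omega_\gamma$, using that $G$ is simply connected (so the fundamental weights $\omega_\gamma$ freely generate the weight lattice of $T$) and that $\omega_D$ is dominant, since $f_D$ is a left-$B$-highest-weight vector in $\CC[G]$. It then suffices to establish that $\langle\omega_D,\gamma^\vee\rangle=0$ whenever $\gamma$ does not move $D$, and $\langle\omega_D,\gamma^\vee\rangle=\varepsilon$ whenever $\gamma$ moves $D$.

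For the vanishing case, I would argue as follows. If $\gamma$ does not move $D$, then the $B$-stable prime divisor $\widetilde D\subseteq G$ is in fact stable under left translation by the minimal parabolic $P_\gamma\supseteq B$. Hence the principal ideal $(f_D)\subseteq\CC[G]$ is $P_\gamma$-stable under left translation, which forces $f_D$ to be an eigenvector for $P_\gamma$, of some eigenvalue $\chi\in\Chi(P_\gamma)$. But every character of $P_\gamma$ is trivial on the derived subgroup of the Levi $L_\gamma=T\cdot\SL_2^\gamma$, hence pairs to zero with $\gamma^\vee$; since $\chi|_T=\omega_D$, we conclude $\langle\omega_D,\gamma^\vee\rangle=0$.

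For the non-vanishing case, I would localize to a rank-one calculation. Suppose $\gamma$ moves $D$ and pick a point $y$ of the open $B$-orbit of $D$; let $Q=(P_\gamma)_y$. Since $\gamma$ moves $D$, $Q$ has codimension one in $P_\gamma$, so the orbit map $P_\gamma\to X$, $p\mapsto py$, factors through an injective morphism $P_\gamma/Q\simeq\PP^1\to X$ whose image meets $\widetilde D$ transversally (at worst) at $y$. Pulling $f_D$ back along this map gives a nonzero section of a $P_\gamma$-equivariant line bundle on $\PP^1$ whose degree equals $\langle\omega_D,\gamma^\vee\rangle$, and which equals the intersection multiplicity of this $\PP^1$ with $\widetilde D$ at $y$. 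By analysing the rank-one spherical $L_\gamma$-variety given by the closure of $P_\gamma y$ (equivalently invoking the classification of rank-one wonderful $L_\gamma$-varieties), the intersection multiplicity is $1$ in the generic case (when $\gamma\notin\Sigma(X)$ and $2\gamma\notin\Sigma(X)$, and also when $\gamma\in S\cap\Sigma(X)$), and equals $2$ precisely when $2\gamma\in\Sigma(X)$; this is exactly the value $\varepsilon$.

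The main obstacle will be the rank-one step, that is, proving cleanly the dichotomy $\varepsilon\in\{1,2\}$ and pinning down when each value occurs. This requires a careful local analysis of the $\SL_2^\gamma$-action at a generic point of $D$, via Luna's classification of rank-one spherical varieties and the distinction between colors of type $b$ (giving $\varepsilon=1$), type $a$ (giving $\varepsilon=1$, with two colors involved), and type $2a$ (giving $\varepsilon=2$). A secondary point to verify is that any two simple roots $\alpha,\beta$ moving the same color $D$ yield the same value of $\varepsilon$, so the statement of the lemma is well-posed; this consistency again follows from the same rank-one classification. Assembling the two cases gives
\[
\omega_D=\sum_{\gamma\in S}\langle\omega_D,\gamma^\vee\rangle\omega_\gamma=\varepsilon\sum_{\beta\text{ moves }D}\omega_\beta,
\]
which is the claimed formula.
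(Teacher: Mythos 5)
Your overall strategy is the standard one behind the result the paper cites (the paper gives no argument of its own, deferring to Foschi via \cite[Lemma 30.24]{MR2797018}): since $G$ is simply connected and $\omega_D$ is dominant, recover $\omega_D=\sum_\gamma\langle\omega_D,\gamma^\vee\rangle\,\omega_\gamma$, show the coefficient vanishes at non-moving roots, and compute the remaining coefficients by a rank-one analysis governed by the type ($a$, $2a$, $b$) of the color. Your vanishing argument is correct as stated.

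The genuine gap is in how you produce the rank-one curve. For $y$ in the open $B$-orbit of $D$, the stabilizer $Q=(P_\gamma)_y$ does \emph{not} have codimension one in $P_\gamma$: because $\gamma$ moves $D$, the orbit $P_\gamma y$ strictly contains the open $B$-orbit $By$ of the divisor $D$, hence has dimension greater than $\dim X-1$; so $P_\gamma y$ is the open $P_\gamma$-orbit of $X$ and $Q$ has codimension $\dim X$ in $P_\gamma$. There is therefore no injective morphism $\PP^1\simeq P_\gamma/Q\to X$, and the degree/intersection computation built on it has no meaning as written. The correct reduction is different: restrict $f_D$ to $P_\gamma g_0$ with $g_0$ lying over a point $x_0$ of the open $B$-orbit of $X$; this restriction is left-$B$-semiinvariant of weight $\omega_D$, so its zero divisor $\widetilde D\cap P_\gamma g_0$ descends to a divisor of \emph{total} degree $\langle\omega_D,\gamma^\vee\rangle$ on $B\backslash P_\gamma\simeq\PP^1$, which one then identifies by factoring $P_\gamma x_0\cong P_\gamma/(P_\gamma\cap G_{x_0})$ through a rank-one homogeneous space $\SL(2)/H'$ with $H'$ among $T$, $N(T)$, $\mu_nU$ --- this is where Luna's classification enters and gives degree $1$ in types $a$ and $b$ and degree $2$ in type $2a$. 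Note that even after this repair the relevant quantity is a total degree and not a local intersection multiplicity at a single point: in type $2a$ (model case $\SL(2)/N(T)$, equation $f_D=cd$) the degree-two divisor on $\PP^1$ consists of two distinct reduced points. Finally, the well-posedness issue you flag --- that all simple roots moving $D$ yield the same $\varepsilon$ --- is a genuine obligation and requires the same classification; it is not supplied by your setup.
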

\begin{proof}
See~\cite[Lemma 30.24]{MR2797018}.
\end{proof}


\section{Computing the extended weight monoid}\label{s:howto}

Let $X=G/K$ be a spherical homogeneous space. In this section we explain how to compute explicitly the extended weight monoid of $X$, using the homogeneous spherical datum of $X$. For this we refer to the generators of $\widetilde \Gamma(X)$ appearing in Proposition~\ref{prop:generators}. The weights $\omega_D$ of that proposition are given in Lemma~\ref{lemma:foschi}. Here we will show how to derive the weights $\chi_D$, expressing them as restrictions of certain weights of $T$, provided we choose $K$ in its conjugacy class in a suitable way.

Let us fix a parabolic subgroup $Q$ of $G$ minimal containing $K$. Up to conjugating $Q$ and $K$ in $G$, we may assume that $Q$ contains $K$ and also $B_-$, and we denote by $L_Q$ the Levi subgroup of $Q$ containing $T$. By a standard argument, the group $K$ has a Levi subgroup $L_K$ contained, spherical, and very reductive in $L_Q$, and we have $K^u\subseteq Q^u$.

Let us denote by $\Delta'$ the parabolic subset of colors corresponding to the natural map $\pi\colon X\to G/Q$. Then the colors of $G/Q$ are the images $\pi(D)$ of the colors $D\in\Delta(X)\smallsetminus \Delta'$.

The set $\Delta(G/Q)$ is also identified with $S\smallsetminus S^p(G/Q)$, by associating a color $\pi(D)\in\Delta(G/Q)$ with the (unique) simple root $\alpha$ that moves it. Moreover, for all $D\in\Delta(X)\smallsetminus\Delta'$, a simple application of the Peter-Weyl theorem yields
\begin{equation}\label{eq:colorsofGmQ}
(\omega_{\pi(D)},\chi_{\pi(D)}) = (\omega_\alpha,-\omega^Q_\alpha),
\end{equation}
where we denote here by $\omega^Q_\alpha\colon Q\to\CC^{\times}$ the extension of $\omega_\alpha\colon T\to\CC^{\times}$ to $Q$. Denote by $\omega_\alpha^K$ the restriction $\omega_\alpha^Q|_{K}\colon K\to \CC^{\times}$. 

Consider now a color $D\in\Delta(X)\smallsetminus \Delta'$: equality~(\ref{eq:colorsofGmQ}) yields
\begin{equation}\label{eq:paraboliccolors}
(\omega_D,\chi_D) = (\omega_\alpha,-\omega^K_\alpha).
\end{equation}

To determine the weights of the other colors, we use the fact that the spherical roots of $X$ give condition on these weights, via the Cartan pairing of $X$.

\begin{lemma}\label{lemma:sphroot}
For all $\sigma\in\Sigma(X)$ we have
\begin{equation}\label{eq:sphroot}
(\sigma,0) = \sum_{D\in\Delta(X)} c_X(D,\sigma) \, (\omega_D,\chi_D).
\end{equation}
\end{lemma}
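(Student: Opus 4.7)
The plan is to mimic the divisor argument used in the proof of Proposition~\ref{prop:generators}, but applied to a $B$-semi-invariant rational (rather than regular) function of weight $\sigma$. Since $\sigma\in\Sigma(X)\subseteq \Xi(X)$, there exists a rational function $f_\sigma\in\CC(X)^{(B)}$ of $B$-weight $\sigma$, unique up to a nonzero scalar. Let $\widetilde f_\sigma\in\CC(G)$ denote its pullback along the natural projection $\pi\colon G\to G/K=X$. The goal is to write $\widetilde f_\sigma$ as an explicit product of the $f_D$'s and then read off weights.

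First I would compute $\mathrm{div}(f_\sigma)$ on $X$. Since $X$ is $G$-homogeneous it has no $G$-stable prime divisors, so every $B$-stable prime divisor is a color and
\[
\mathrm{div}(f_\sigma)=\sum_{D\in\Delta(X)} v_D(f_\sigma)\,D,
\]
where $v_D$ is the valuation associated with $D$. By the very definition of $\rho_X$ and of the Cartan pairing, $v_D(f_\sigma)=\langle \rho_X(D),\sigma\rangle=c_X(D,\sigma)$, which is an integer because $D$ is a prime divisor. Pulling back along $\pi$ gives $\mathrm{div}(\widetilde f_\sigma)=\sum_{D\in\Delta(X)} c_X(D,\sigma)\,\widetilde D$ on $G$.

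Next, consider the rational function $F=\prod_{D\in\Delta(X)} f_D^{c_X(D,\sigma)}$ on $G$, which is well-defined up to a nonzero scalar since each $f_D$ is. By construction it has the same divisor as $\widetilde f_\sigma$. Because $G$ is simply connected semisimple, its Picard group is trivial and $\CC[G]^\times=\CC^\times$, so any two rational functions on $G$ with the same divisor differ by a nonzero constant; hence $\widetilde f_\sigma=\lambda\,F$ for some $\lambda\in\CC^\times$.

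Finally I would read off the $(B\times K)$-weights of both sides. The pullback $\widetilde f_\sigma$ is right-$K$-invariant and transforms under left $B$-translation by the character $\sigma$, so its weight is $(\sigma,0)$. The function $F$, being the product of the $f_D$ with exponents $c_X(D,\sigma)$, has weight $\sum_{D} c_X(D,\sigma)(\omega_D,\chi_D)$. Equating these two weights yields~(\ref{eq:sphroot}) in the group $\Chi(B)\times \Chi(K)$. The only subtlety to watch is that the coefficients $c_X(D,\sigma)$ may be negative, so the identity lives in the character group rather than in the monoid $\widetilde\Gamma(X)$ itself, and $F$ must be handled as a rational function; this does not create any real obstacle.
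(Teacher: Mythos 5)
Your proof is correct. All the steps check out: since $X=G/K$ is homogeneous it has no $G$-stable prime divisors, so $\mathrm{div}(f_\sigma)$ is supported on the colors with integer coefficients $v_D(f_\sigma)=\langle\rho_X(D),\sigma\rangle=c_X(D,\sigma)$ by the very definition of $\rho_X$ and of the Cartan pairing; the pullback along the smooth projection $G\to G/K$ matches $\mathrm{div}(f_D)=\pi^*(D)$; and $\CC[G]^\times=\CC^\times$ forces two rational functions on $G$ with equal divisors to be proportional, so comparing $(B\times K)$-eigenvalues of $\widetilde f_\sigma$ and of $\prod_D f_D^{c_X(D,\sigma)}$ gives exactly~(\ref{eq:sphroot}).

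The route differs from the paper only in presentation. The paper does not write out an argument: it cites Brion's proof of the formula in the wonderful case and asserts that the same argument extends, and it also offers a reduction of the general case to the wonderful one via the wonderful closure $\widehat K$ (using that $X\to G/\widehat K$ identifies colors, spherical roots, and Cartan pairings). What you have written is essentially that underlying argument, made self-contained; its advantage is that it works uniformly for an arbitrary spherical subgroup $K$ (connected or not, wonderful or not) with no reduction step, and it is the natural ``rational-function'' extension of the divisor argument already used in the paper's proof of Proposition~\ref{prop:generators}, the only new ingredients being that the exponents $c_X(D,\sigma)$ may be negative and that $F$ is only rational. Your closing remark correctly identifies this as the sole point requiring care, and it causes no difficulty.
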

\begin{proof}
The formula is well known. It is proved e.g.\ by Brion in \cite[Section~2.1]{MR2326138}, under the assumption that $K$ is wonderful. The same argument of loc.cit.\ yields the formula for any spherical subgroup $K\subseteq G$.

One can also easily reduce the general case to the one of wonderful subgroups, by using the {\em wonderful closure} $\widehat K$ of $K$, defined in~\cite{MR3317802}. The relevant properties of $\widehat K$ are that $K\subseteq \widehat K\subseteq \overline K$, that $\Sigma(G/\widehat K)=\Sigma(X)$, and that the natural morphism $X\to G/\widehat K$ induces a bijection between $\Delta(X)$ and $\Delta(G/\widehat K)$ compatible with the Cartan pairing. Then the formula of the lemma for $G/K$ is clearly equivalent to the same formula for $G/\widehat K$.
\end{proof}

To show that this is enough to determine the weights of all colors, we will need the following known auxiliary results.

\begin{lemma}\label{lemma:vr}
Let $L$ be a very reductive subgroup of a connected reductive group $M$. Then the restriction map $\Chi(L)\to \Chi(L\cap Z(M)^\circ)$ has finite kernel.
\end{lemma}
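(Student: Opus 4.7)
The plan is to reduce first to the case where $M$ is semisimple, and then to show that in that case $\Chi(L)$ is finite, via a dynamical parabolic construction that exploits very reductivity.

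For the reduction, I would pass to the quotient $\overline M := M/Z(M)^\circ$ (which has trivial connected center) and the image $\overline L := L/A$ of $L$, where $A := L \cap Z(M)^\circ$. Since parabolic subgroups of $M$ all contain $Z(M)^\circ$, they correspond bijectively to parabolic subgroups of $\overline M$ (proper to proper), so $\overline L$ is again very reductive. As $\ker(\Chi(L) \to \Chi(A)) = \Chi(L/A) = \Chi(\overline L)$, it suffices to prove the reduced claim: \emph{if $M$ is semisimple and $L \subseteq M$ is very reductive, then $\Chi(L)$ is finite.}

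For this reduced claim I would first observe that $L^\circ$ must be reductive: otherwise the unipotent radical $R_u(L^\circ)$ would be nontrivial and, by Borel--Tits, its normalizer in $M$ would be contained in some proper parabolic $P$; but $L$ normalizes $L^\circ$ and hence $R_u(L^\circ)$, which would force $L \subseteq P$, contradicting very reductivity. Since $L/L^\circ$ is finite, $\Chi(L) \to \Chi(L^\circ)$ has finite kernel and its image lies in $\Chi(L^\circ)^{L/L^\circ}$; setting $Z := Z(L^\circ)^\circ$, the restriction $\Chi(L^\circ) \hookrightarrow \Chi(Z)$ is $L/L^\circ$-equivariant with finite cokernel. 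So it suffices to show that the identity component $Z_0 := (Z^{L/L^\circ})^\circ$ of the fixed subtorus is trivial.

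The concluding step is a one-parameter subgroup argument. By construction $Z_0$ centralizes $L^\circ$ and is fixed pointwise under conjugation by $L$, so $Z_0 \subseteq C_M(L)$. Hence for any cocharacter $\lambda \colon \Gm \to Z_0$ one has $\lambda(t) g \lambda(t)^{-1} = g$ for all $g \in L$, whence $L \subseteq P(\lambda) := \{g \in M : \lim_{t \to 0} \lambda(t) g \lambda(t)^{-1}\text{ exists}\}$. Very reductivity then forces $P(\lambda) = M$; but the $\Ad$-invariance of the Killing form of the semisimple $M$ makes the $\Ad(\lambda)$-weight decomposition of $\mathrm{Lie}(M)$ symmetric about $0$, so $P(\lambda) = M$ forces $\lambda$ to be trivial. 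Therefore $Z_0$ has no nontrivial cocharacters, $Z_0 = 1$, and the proof concludes.

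The main subtlety is the choice of $Z_0$: using all of $Z(L^\circ)^\circ$ would in general be too large, since the dynamical argument then only places $L^\circ$---not $L$---inside $P(\lambda)$. Restricting to the $L/L^\circ$-fixed subtorus is precisely what forces the cocharacter $\lambda$ to commute with all of $L$; the failure for the ``wrong'' torus is visible already in $L = N_{\SL(2)}(T) \subset \SL(2)$, where $Z(L^\circ)^\circ = T$ is not central in $\SL(2)$ but its $L/L^\circ$-fixed part is trivial.
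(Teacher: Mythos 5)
Your proof is correct, and its core is the same as the paper's: quotient by $Z(M)^\circ$ to reduce to $M$ semisimple, then show that a nontrivial one-parameter subgroup $\lambda$ central in (the relevant part of) $L$ would place $L$ inside the parabolic $P(\lambda)$, which must equal $M$ by very reductivity, forcing $\lambda$ to be trivial since $Z(M)$ is finite. The genuine difference is in how the component group of $L$ is handled, and here your version is the more careful one. The paper reduces to the connected case by replacing $L$ with $L^\circ$, using only that $\Chi(L)\to\Chi(L^\circ)$ has finite kernel; but $L^\circ$ need not be very reductive when $L$ is, and the connected statement can then fail outright --- your own example $L=N_{\SL(2)}(T)$ makes this vivid, since $\Chi(T)\to\Chi(T\cap Z(\SL(2))^\circ)$ has infinite kernel. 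You avoid this by remembering that the image of $\Chi(L)$ in $\Chi(L^\circ)$ lands in the $L/L^\circ$-invariants and by running the dynamical argument only on the fixed subtorus $Z_0=\bigl(Z(L^\circ)^{L/L^\circ}\bigr)^\circ$, which is precisely what guarantees $L\subseteq P(\lambda)$ rather than merely $L^\circ\subseteq P(\lambda)$. Two further points in your favour: you justify via Borel--Tits that $L^\circ$ is reductive (the paper silently uses $\overline L=(\overline L,\overline L)\cdot Z(\overline L)^\circ$, which presupposes this), and your Killing-form argument that $P(\lambda)=M$ forces $\lambda$ trivial is a self-contained substitute for the citation to Springer. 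The only cost is a slightly longer reduction; nothing in your argument needs repair.
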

\begin{proof}
Consider the commutative diagram
\[
\begin{array}{ccc}
\Chi(L) &\to& \Chi(L\cap Z(M)^\circ)\\
\downarrow & & \downarrow \\
\Chi(L^\circ) & \to & \Chi(L^\circ\cap Z(M)^\circ)
\end{array}
\]
given by the restriction maps. Using the fact that the vertical maps have finite kernel, we may replace $L^\circ$ by $L$ if necessary, and assume that $L$ is connected.

Let $\chi$ be a character of $L$ trivial on $L\cap Z(M)^\circ$. Then it descends to a character $\overline \chi$ of the image $\overline L$ via the quotient $M\to \overline M = M/(Z(M)^\circ)$. Notice that $\overline L$ is a connected very reductive subgroup of the semisimple group $\overline M$.

Our lemma follows now from the claim that the character group of $\overline L$ is trivial. To prove this claim, suppose for sake of contradiction that $\Chi(\overline L)$ is non-trivial. Then the center of $\overline L$ has positive dimension, because $\overline L = (\overline L,\overline L)\cdot Z(\overline L)^\circ$. 

Let $\xi\colon\CC^{\times}\to Z(\overline L)^\circ$ be a non-trivial one-parameter subgroup. Then $\overline L$ is contained in the set $P(\xi)$ of elements $m\in M$ such that the limit
\[
\lim_{a\to 0} \xi(a) m \xi(a)^{-1}
\]
exists. But $P(\xi)$ is a proper parabolic subgroup of $\overline M$ (see \cite[Proposition~8.4.5 and its proof]{MR1642713})): contradiction.
\end{proof}

We proceed by giving two useful formulae in the next lemma. The first is well-known; the second was already implicitly given by Bravi and Pezzini in~\cite[Lemma~3.2.1]{MR3198836}, and it turns out to be the core of our procedure for computing extended weight monoids.

\begin{lemma}\label{lemma:eq}
We have
\begin{eqnarray}
\rk\Chi(K) & = &|\Delta(X)|-\rk\Xi(X),\\
\label{eq:kernel} \rk\Xi(X) - \dim(\Span_\QQ\rho_X(\Delta')) & = &  \rk\Chi(Q) - \rk\Chi(K).\label{eq:rankChar}
\end{eqnarray}
\end{lemma}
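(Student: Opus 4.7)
My proof proposal is in three steps. First, I would establish the first equation via the short exact sequence
\[
0 \to \Xi(X) \to \widetilde\Gamma(X)^{\mathrm{gp}} \to \Chi(K) \to 0
\]
with $(\omega_f,\chi_f)\mapsto\chi_f$. Surjectivity uses $\mathrm{Pic}(G/K)\cong\Chi(K)$, which is valid because $G$ is simply connected semisimple (so $\Chi(G)=\mathrm{Pic}(G)=0$), together with the fact that every line bundle on the spherical $G/K$ is of the form $\mathcal O(D)$ for a $\ZZ$-combination of colors. Exactness at $\widetilde\Gamma(X)^{\mathrm{gp}}$ holds because, given $\omega\in\Xi(X)$ with $B$-eigenfunction $f_\omega\in\CC(X)^{(B)}$, writing the pullback $\tilde f_\omega=f_1/f_2$ with $f_1,f_2$ regular $B\times K$-semiinvariants on $G$ forces $\chi_{f_1}=\chi_{f_2}$, so $(\omega,0)\in\widetilde\Gamma(X)^{\mathrm{gp}}$; the converse is immediate. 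Combined with Proposition~\ref{prop:generators}, this yields $|\Delta(X)|=\rk\Xi(X)+\rk\Chi(K)$.

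For the second equation, by~(\ref{eq:colorsofGmQ}) the set $\Delta(X)\smallsetminus\Delta'$ is in natural bijection with $\Delta(G/Q)$, so $|\Delta(X)\smallsetminus\Delta'|=|S\smallsetminus S^p(G/Q)|=\rk\Chi(Q)$. Substituting into the first equation gives $\rk\Xi(X)-|\Delta'|=\rk\Chi(Q)-\rk\Chi(K)$, and so the required identity reduces to the claim
\[
\dim_\QQ\Span_\QQ\rho_X(\Delta')=|\Delta'|,
\]
i.e.\ the linear independence of $\{\rho_X(D)\}_{D\in\Delta'}$ in $N(X)$. Dualising the pairing $\ZZ\Delta'\times\Xi(X)\to\ZZ$, $(D,\omega)\mapsto\langle\rho_X(D),\omega\rangle$, this linear independence is equivalent to the map $\Xi(X)\to\widetilde\Gamma(X)^{\mathrm{gp}}/\widetilde\Gamma_1\cong\ZZ^{\Delta'}$ having image of full rank $|\Delta'|$, where $\widetilde\Gamma_1$ is the subgroup generated by the $(\omega_D,\chi_D)$ for $D\in\Delta(X)\smallsetminus\Delta'$. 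Applying the snake lemma to the inclusion $\widetilde\Gamma_1\hookrightarrow\widetilde\Gamma(X)^{\mathrm{gp}}$ in conjunction with the sequence above identifies the cokernel of this map with $\Chi(K)$ modulo the image of the restriction $\Chi(Q)\to\Chi(K)$, so the linear independence is equivalent to this restriction having image of rank $\rk\Chi(K)$.

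The final step is to deduce this rank equality from the very reductivity of $L_K\subseteq L_Q$ (a consequence of the minimality of $Q$ among parabolics containing $K$), combined with Lemma~\ref{lemma:vr} and the identifications $\Chi(K)=\Chi(L_K)$, $\Chi(Q)=\Chi(L_Q)$. Since $Z(L_Q)^\circ$ centralises $L_K$ one has $L_K\cap Z(L_Q)^\circ\subseteq Z(L_K)^\circ$, whence $\dim(L_K\cap Z(L_Q)^\circ)\leq\rk\Chi(L_K)$; Lemma~\ref{lemma:vr} gives the reverse inequality, so equality holds. The composition $\Chi(L_Q)\to\Chi(L_K)\to\Chi(L_K\cap Z(L_Q)^\circ)$ coincides with the direct restriction $\Chi(L_Q)\to\Chi(L_K\cap Z(L_Q)^\circ)$, which factors through the surjection (up to finite index) of torus character groups $\Chi(Z(L_Q)^\circ)\to\Chi(L_K\cap Z(L_Q)^\circ)$ and hence has image of rank $\dim(L_K\cap Z(L_Q)^\circ)=\rk\Chi(L_K)$. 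Since the second factor has finite kernel (Lemma~\ref{lemma:vr} again), this forces the image of $\Chi(L_Q)\to\Chi(L_K)$ to have rank $\rk\Chi(L_K)$, completing the argument. The main obstacle is the careful bookkeeping of restriction maps and their finite-index kernels; Lemma~\ref{lemma:vr} is the decisive structural ingredient.
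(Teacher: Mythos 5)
Your proposal is correct. The first identity is obtained essentially as in the paper: you use the exact sequence $0\to\Xi(X)\to\widetilde\Gamma(X)^{\mathrm{gp}}\to\Chi(K)\to 0$ together with the fact that the middle term is free of rank $|\Delta(X)|$, while the paper writes the middle term as the fibre product $\Chi(B)\times_{\Chi(B\cap K)}\Chi(K)$ and quotes Brion's rank computation directly; this is the same argument in two dialects. For the second identity, however, your route is genuinely different from the paper's. The paper proves~(\ref{eq:kernel}) by a function-theoretic construction: for each $\gamma\in\bigcap_{D\in\Delta'}\ker\rho_X(D)$ it analyses the pullback $F_\gamma\in\CC(G)$, uses the Peter--Weyl theorem to identify its $Q$-eigenvalue, and shows that restriction to $Z(L_Q)^\circ$ gives an injection with finite cokernel onto the characters of $Z(L_Q)^\circ$ killed by $L_K\cap Z(L_Q)^\circ$. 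You instead combine the first identity with $|\Delta(X)\smallsetminus\Delta'|=|\Delta(G/Q)|=\rk\Chi(Q)$ to reduce~(\ref{eq:kernel}) to the linear independence of $\rho_X(\Delta')$, translate that via the exact sequence into the single claim that $\Chi(Q)\to\Chi(K)$ has image of finite index, and prove this claim purely group-theoretically from Lemma~\ref{lemma:vr} together with the surjectivity of restriction of characters from the torus $Z(L_Q)^\circ$ to its closed subgroup $L_K\cap Z(L_Q)^\circ$. Both proofs pivot on Lemma~\ref{lemma:vr}; yours replaces the eigenfunction analysis by bookkeeping in character groups (the Peter--Weyl input enters only through equality~(\ref{eq:paraboliccolors}), which is established earlier in the section and may be quoted), and it yields $|\Delta'|=\dim\Span_\QQ\rho_X(\Delta')$ along the way --- exactly the consequence the paper later extracts from the lemma in the proof of Theorem~\ref{thm:howto}. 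Two harmless imprecisions: $L_K\cap Z(L_Q)^\circ$ need only lie in $Z(L_K)$, not in $Z(L_K)^\circ$, which is all your dimension bound uses; and the image of $\widetilde\Gamma_1$ in $\Chi(K)$ is generated by the $\omega_\alpha^K$ and hence is a priori only a finite-index subgroup of the image of $\Chi(Q)\to\Chi(K)$, which again suffices for the rank count.
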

\begin{proof}
The first formula is well known, let us give a proof for convenience. We may assume that $BK$ is open in $G$. Then we notice that $\Xi(X)$ is isomorphic to the quotient $\CC(X)^{(B)}/\CC^{\times}$, by associating to $\lambda\in\Xi(X)$ the class $[f]$ (modulo the constant invertible functions) of a $B$-eigenvector $f\in\CC(X)$ of $B$-eigenvalue $\lambda$. Since $BK/K$ is open in $G/K$, we may identify $\CC(X)^{(B)}/\CC^{\times}$ with the group of characters $\Chi(B/(B\cap K))$, which is isomorphic to $\Chi(B)^{B\cap K}$.

By~\cite[Lemma~2.1.1]{MR2326138} (whose proof holds for any spherical subgroup $K\subseteq G$), we have
\[
\rk(\Chi(B)\times_{\Chi(B\cap K)}\Chi(K)) = |\Delta(X)|.
\]
This, together with the exact sequence
\[
0 \to \Chi(B)^{B\cap K} \to \Chi(B)\times_{\Chi(B\cap K)}\Chi(K) \to \Chi(K) \to 0
\]
(the third map is surjective because all characters of $B\cap K$ extend to characters of $B$), yields the first formula of the lemma.

Formula~(\ref{eq:kernel}) follows easily from \cite[Lemma~3.2.1]{MR3198836} under the assumption $\Xi(X)=\Span_\ZZ\Sigma(X)$. The same proof holds without this assumption, let us check the details.

Thanks to Lemma~\ref{lemma:vr}, and the fact that the restriction map $\Chi(K)\to\Chi(L_K)$ is an isomorphism, we may replace $\Chi(K)$ in formula~(\ref{eq:kernel}) with $\Chi(L_K\cap Z(L_Q)^\circ)$. We may also replace $\Chi(Q)$ by $\Chi(Z(L_Q)^\circ)$, since $Q=Q^u\cdot (L_Q,L_Q)\cdot Z(L_Q)^\circ$.

Now, the formula follows if we prove that restricting the elements of $\Xi(X)$ to the subtorus $Z(L_Q)^\circ$ of $T$ induces an injective map
\[
\varphi\colon \bigcap_{ D\in \Delta' } \ker(\rho_X(D)) \to \Chi(Z(L_Q)^\circ)^{L_K\cap Z(L_Q)^\circ}.
\]
with finite cokernel.

Let $\gamma\in\bigcap_{ D\in \Delta' } \ker(\rho_X(D))$, and let $f_\gamma\in\CC(X)$ be a $B$-eigenvector of $B$-eigenvalue $\gamma$. Call $F_\gamma$ the pull-back of $f_\gamma$ on $G$. By our assumptions on $\gamma$, the function $F_\gamma$ has no zero nor pole except possibly for the pull-backs of colors of $G/Q$ along the projection $G\to G/Q$.

This implies that $F_\gamma$ is a $B$-eigenvector, under the action of $G$ on $G$ by left translation, and a $Q$-eigenvector under the action of right translation. Using the Peter-Weyl theorem, it is elementary to deduce that the $Q$-eigenvalue (restricted to $T$) of $F_\gamma$ is $-\gamma$. But $F_\gamma$ is also $K$-invariant under right translation, therefore $L_K$-invariant, and so $-\gamma|_{L_K\cap Z(Q)^\circ}$ is trivial. We deduce that the map $\varphi$ as above is defined.

This argument also shows that $\gamma$ extends to a character of $Q$, and if it is trivial on $Z(L_Q)^\circ$ then it trivial on $Q$. It follows that $\varphi$ is injective.

Finally, consider $\chi\in\Chi(Z(L_Q)^\circ)$, and extend it to a character $\widetilde\chi$ of $Q$. This is, again using the Peter-Weyl theorem, the $Q$-eigenvalue of some $Q$-eigenvector $F\in\CC(G)$ under right translation, such that $F$ is also a $Q_+$-eigenvector of weight $-\widetilde\chi$ under left translation. Here $Q_+$ is the parabolic subgroup of $G$ opposite to $Q$ with respect to $T$.

Suppose that $\chi$ (and hence $\widetilde\chi$) is trivial on $L_K\cap Z(Q)^\circ$. By Lemma~\ref{lemma:vr}, a multiple $n\widetilde \chi$ for some nonzero $n\in\ZZ$ is trivial on $L_K$, and hence on $K$. We deduce that $F^n$ is fixed under right translation by $K$, i.e.\ it descends to a $B$-eigenvector of $\CC(X)$ of $B$-eigenvalue $\gamma=-n\widetilde\chi|_{B}$, with the property that $-\gamma|_{Z(L_Q)^\circ}=n\chi$. This shows that $\varphi$ has finite cokernel, and the proof is complete.
\end{proof}

\begin{theorem}\label{thm:howto}
Let $\Delta'\subseteq\Delta(X)$ be a minimal parabolic subset of colors. Then equalities~(\ref{eq:paraboliccolors}) for all $D\in\Delta'$, and equalities~(\ref{eq:sphroot}) for all $\sigma\in\Sigma(X)$, determine $\chi_D|_{K^\circ}$ uniquely for all $D\in\Delta(X)$.
\end{theorem}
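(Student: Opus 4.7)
The plan is to reduce uniqueness modulo $K^\circ$ to a linear-algebra statement about the Cartan pairing, and then to a convex-geometric consequence of the minimality of $\Delta'$. Existence of coefficients satisfying the stated relations is essentially already in place: interpreting the statement as meaning that~(\ref{eq:paraboliccolors}) is invoked for $D\in\Delta(X)\smallsetminus\Delta'$ (where it actually applies), those equalities are exactly the ones derived in the paragraph preceding the theorem, and Lemma~\ref{lemma:sphroot} furnishes the spherical-root relations.

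For the uniqueness part, I would start by taking two collections $(\chi_D)$ and $(\chi_D')$ satisfying the given equations, setting $y_D=\chi_D-\chi_D'\in\Chi(K)_\QQ$ and $z_D=y_D|_{K^\circ}\in\Chi(K^\circ)_\QQ$. Equation~(\ref{eq:paraboliccolors}) forces $y_D=0$ for $D\in\Delta(X)\smallsetminus\Delta'$, and subtracting the two copies of~(\ref{eq:sphroot}) and restricting to $K^\circ$ leaves the homogeneous relations
\[
\sum_{D\in\Delta'} c_X(D,\sigma)\, z_D = 0 \qquad\text{in $\Chi(K^\circ)_\QQ$, for every $\sigma\in\Sigma(X)$.}
\]
Viewing $(z_D)_{D\in\Delta'}$ as a $\QQ$-linear map $\QQ^{\Delta'}\to\Chi(K^\circ)_\QQ$, these relations say exactly that this map vanishes on the subspace of $\QQ^{\Delta'}$ spanned by the vectors $\bigl(c_X(D,\sigma)\bigr)_{D\in\Delta'}$ as $\sigma$ runs over $\Sigma(X)$. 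Hence showing $z_D=0$ for all $D\in\Delta'$ amounts to showing that this subspace equals all of $\QQ^{\Delta'}$. Dually, the task is to prove that the restrictions $\rho_X(D)|_{\Span_\QQ\Sigma(X)}$, for $D\in\Delta'$, are linearly independent in $(\Span_\QQ\Sigma(X))^*$, or equivalently—since the annihilator of $\Span_\QQ\Sigma(X)$ in $N(X)_\QQ$ is the linear part $\V(X)_{\text{lin}}$ of the valuation cone—that the images of $\rho_X(D)$ for $D\in\Delta'$ are linearly independent in $N(X)_\QQ/\V(X)_{\text{lin}}$.

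The main step, which I expect to be the only real obstacle, is the convex claim that minimal parabolicity of $\Delta'$ forces precisely this linear independence. My plan is to argue by contradiction: suppose $v:=\sum_{D\in\Delta'}a_D\rho_X(D)\in\V(X)_{\text{lin}}$ with some $a_D\neq 0$. After replacing $a$ by $-a$ I may assume some $a_D>0$. Parabolicity of $\Delta'$ supplies coefficients $c_D>0$ with $\sum_D c_D\rho_X(D)$ strictly positive on $\Sigma(X)$ (strict positivity of the $c_D$ is arranged by a small uniform shift). Setting $t^*=\min\{c_D/a_D:a_D>0\}$, attained at some $D_0$, the vector $c-t^*a$ has all entries $\geq 0$, vanishes at $D_0$, and the combination $\sum_D(c-t^*a)_D\rho_X(D)=\sum_D c_D\rho_X(D)-t^*v$ is still strictly positive on $\Sigma(X)$ because $v$ annihilates every spherical root. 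Thus $\Delta'\smallsetminus\{D_0\}$ is parabolic, contradicting minimality.

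The reduction to linear independence modulo $\V(X)_{\text{lin}}$ is formal linear algebra; the decisive ingredient is the last, convex-geometric step, where minimal parabolicity of $\Delta'$ is used essentially.
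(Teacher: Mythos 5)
Your proof is correct, and the key step is genuinely different from the paper's. Both arguments reduce uniqueness to showing that the matrix $A=\left(c_X(D,\sigma)\right)_{D\in\Delta',\,\sigma\in\Sigma(X)}$ has rank $|\Delta'|$ (your formulation via the vanishing of the map $\QQ^{\Delta'}\to\Chi(K^\circ)_\QQ$ on the span of the rows is the same reduction), and you are right that~(\ref{eq:paraboliccolors}) is meant to be invoked for $D\in\Delta(X)\smallsetminus\Delta'$. The divergence is in how the rank is computed. The paper obtains $|\Delta'|=\dim\Span_\QQ\rho_X(\Delta')$ from the two rank formulas of Lemma~\ref{lemma:eq} (which in turn rest on Lemma~\ref{lemma:vr} and Brion's computation of $\rk(\Chi(B)\times_{\Chi(B\cap K)}\Chi(K))$), needs the hypothesis $\Xi(X)_\QQ=\Span_\QQ\Sigma(X)$ to identify that dimension with $\rk A$, and then disposes of the general case by passing to the wonderful closure $\widehat K$. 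You instead prove directly that minimality of the parabolic subset forces the $\rho_X(D)$, $D\in\Delta'$, to be linearly independent modulo $\V(X)_{\text{lin}}$ (which is indeed the annihilator of $\Span_\QQ\Sigma(X)$), via the exchange argument: a relation $v=\sum_D a_D\rho_X(D)\in\V(X)_{\text{lin}}$ lets you subtract $t^*a$ from the coefficients witnessing parabolicity, killing the coefficient at some $D_0$ while preserving strict positivity on all spherical roots because $v$ annihilates them, so $\Delta'\smallsetminus\{D_0\}$ is parabolic — contradiction. (The preliminary perturbation making all $c_D>0$ is harmless but not even needed: with $c_D\geq 0$ one still has $t^*\geq 0$ and $c-t^*a\geq 0$.) This route is more elementary and self-contained — no character-group rank counts — and, since it controls the restrictions to $\Span_\QQ\Sigma(X)$ directly, it works for an arbitrary lattice $\Xi(X)$, making the wonderful-closure reduction at the end of the paper's proof unnecessary. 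What the paper's route buys in exchange is Lemma~\ref{lemma:eq} itself, which ties $|\Delta'|$ to $\rk\Chi(Q)-\rk\Chi(K)$ and is of independent interest for the explicit computations in the appendices.
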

\begin{proof}
We first prove the theorem under the assumption that $\Xi(X)_\QQ=\Span_\QQ\Sigma(X)$.

Let us denote for brevity $\chi_D|_{K^\circ}$ by $\chi'_D$. Equalities~(\ref{eq:paraboliccolors}) determine $\chi'_D$ for all $D\in\Delta(X)\smallsetminus\Delta'$, it remains to prove that the elements $\chi'_E$ for $E\in\Delta'$ are also uniquely determined. We consider then~(\ref{eq:sphroot}) for $\sigma$ varying in $\Sigma(X)$ as a system of inhomogeneous linear equations with unknowns $\chi'_E$.

These unknowns take values in $\Chi(K^\circ)$, which is a free abelian group of finite rank. It is harmless to consider the unknowns as taking values in $\Chi(K^\circ)_\QQ$, so that our system of equations has vector unknowns, vector constant terms, and scalar coefficients. To prove that it has a unique solution, we must show that the number of unknowns is equal to the rank of the matrix of the homogeneous system, that is, the matrix
\[
A = \left( c_X(D,\sigma) \right)_{D\in\Delta',\sigma\in\Sigma(X)}.
\]

We put together the equalities of Lemma~\ref{lemma:eq}, together with $\rk(Q)=|\Delta(G/Q)| = |\Delta(X)|-|\Delta'|$ (where the first equality follows from the first equality of Lemma~\ref{lemma:eq} applied to the spherical subgroup $Q$ of $G$), and obtain
\begin{equation}\label{eq:last}
|\Delta'| = \dim(\Span_\QQ\rho_X(\Delta')).
\end{equation}
Since $\Sigma(X)$ is a basis $\Xi_\QQ$, the right hand side of~(\ref{eq:last}) is equal to the rank of $A$. Therefore the theorem holds.

To finish the proof, it remains to reduce the general case to the case where $\Xi(X)_\QQ=\Span_\QQ\Sigma(X)$, so let $K$ be any spherical subgroup of $G$. Let us consider again the wonderful closure $\widehat K$ of $K$, and set $Y=G/\widehat K$. Thanks to the recalled properties of $\widehat K$, in particular the given identification between $\Delta(X)$ and $\Delta(Y)$, the pull-back to $G$ of the colors of $X$ and of $Y$ are the same. So, for all $D\in\Delta(X)$, one can take the same global equation to define the weight $\chi_D\in\Chi(K)$ for $G/K$ and the weight for $G/\widehat K$, let us denote it by $\widehat \chi_D$. This means that $\widehat \chi_D|_K=\chi_D$.

The equalities~(\ref{eq:paraboliccolors}) and~(\ref{eq:sphroot}) are the same for $X$ and for $Y$. The first part of the proof, applied to $Y$, yields that these equalities determine uniquely $(\omega_D,\widehat\chi_D|_{(\widehat K)^\circ})$ for all $D$. But $K^\circ$ is contained in $(\widehat K)^\circ$, hence $(\omega_D,\widehat\chi_D|_{K^\circ})$, which is equal to $(\omega_D,\chi_D|_{K^\circ})$, is uniquely determined too.
\end{proof}

We end this section discussing the equalities~(\ref{eq:sphroot}) in case we include into the picture groups that are not simply connected.

Let then $G_0$ be a quotient of $G$ by a finite central subgroup. As before, let $K$ be a spherical subgroup of $G$, denote by $K_0$ the image of $K$ in $G_0$, and let $\pi\colon G/K\to G_0/K_0$ be the natural induced morphism.

\begin{proposition}\label{prop:G0}
Suppose that $\Sigma(G_0/K_0)\cap 2S=\varnothing$. Then $D\mapsto \pi(D)$ is a bijection between $\Delta(G/K)$ and $\Delta(G_0/K_0)$ compatible with the Cartan pairing. Moreover, the equalities~(\ref{eq:sphroot}) hold for $\sigma$ varying in $\Sigma(G_0/K_0)$; if one considers them as a system of equalities in the unknowns $\chi_D$ taking values in $\Chi(K^\circ)_\QQ$, then the system is equivalent to the one where $\sigma$ varies in $\Sigma(G/K)$.
\end{proposition}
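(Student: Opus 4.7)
The plan is to set $G_0 = G/Z$ with $Z$ a finite central subgroup of $G$, so that $K_0 = KZ/Z$ and $G_0/K_0 \cong G/(KZ)$. This makes $\pi$ a finite \'etale $G$-equivariant cover with deck group $\Gamma := Z/(Z \cap K)$; since $Z$ is central in $G$, $\Gamma$ embeds into $N_G(K)/K$ and acts on $G/K$ by $G$-equivariant automorphisms.

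The first and most delicate step is to deduce from the hypothesis that $Z \subseteq \overline K$. By Proposition~\ref{prop:augmentations} this is equivalent to $\Xi(G/\overline K) = \Span_\ZZ \Sigma^{sc}(G/K) \subseteq \Xi(G_0/K_0)$, and I will use the standard fact that for a finite cover of spherical homogeneous spaces the two sets of spherical roots consist of primitive elements of the same rays in the common rational span of the weight lattices, differing by a rescaling factor in $\{1, 2\}$. Picking $\tau \in \Sigma^{sc}(G/K)$ and invoking Remark~\ref{rem:sphericalclosure}, part~(\ref{rem:sphericalclosure:doubling}), I write $\tau = \sigma$ or $\tau = 2\sigma$ for a unique $\sigma \in \Sigma(G/K)$ and check case by case that $\tau \in \Xi(G_0/K_0)$: for $\sigma \in S$ the hypothesis rules out a rescaling factor of $2$ (which would produce $2\sigma \in \Sigma(G_0/K_0) \cap 2S$); for $\sigma \notin S$ and $\tau = 2\sigma$ the element $2\sigma$ lies in $\Xi(G_0/K_0)$ whatever the rescaling factor is; and for $\sigma \notin S$ with $\tau = \sigma$ (no doubling in the spherical closure), a rescaling factor of $2$ would force $2\sigma$ to be a spherical root of the $G$-variety $G_0/K_0$, contradicting the definition of $\Sigma^{sc}$.

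Once $Z \subseteq \overline K$ is established, $\Gamma$ acts trivially on $\Delta(G/K)$ by definition of the spherical closure, so $\pi$ induces a bijection $D \mapsto \pi(D)$ between $\Delta(G/K)$ and $\Delta(G_0/K_0)$, compatible with the Cartan pairing by Remark~\ref{rem:sphericalclosure}(1) applied to the chain $K \subseteq KZ \subseteq \overline K$: concretely, the global equation of $\pi(D)$ on $G_0$ pulls back along $G \to G_0$ to that of $D$ on $G$, so $\rho_{G/K}(D)$ and $\rho_{G_0/K_0}(\pi(D))$ agree through the natural inclusions. The equality~(\ref{eq:sphroot}) for $\sigma \in \Sigma(G_0/K_0)$ then follows by applying Lemma~\ref{lemma:sphroot} to $G_0/K_0$ and pulling back via $\Chi(B_0) \hookrightarrow \Chi(B)$ and $\Chi(K_0) \hookrightarrow \Chi(K)$, which send $(\omega_{\pi(D)}, \chi_{\pi(D)})$ to $(\omega_D, \chi_D)$.

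For the equivalence of the two systems when the unknowns $\chi_D$ take values in $\Chi(K^\circ)_\QQ$, I observe that each spherical root of $G_0/K_0$ is a positive rational multiple (namely $1$ or $2$) of a unique spherical root of $G/K$ and vice versa. Since~(\ref{eq:sphroot}) depends bilinearly on $\sigma$ on both sides (as $c(-,-)$ is bilinear), the equation indexed by $k\sigma$ is $k$ times the equation indexed by $\sigma$, so the two systems consist of pairwise proportional equations and are thus equivalent over $\QQ$. The main obstacle throughout is the first step, which requires the combinatorial analysis of how spherical roots behave under finite central quotients.
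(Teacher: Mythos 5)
Your strategy is genuinely different from the paper's. The paper views $G_0/K_0$ as $G/\widetilde K$ with $\widetilde K=KZ$, invokes Knop's Theorems~4.4 and~6.1 of the Luna--Vust paper to see that the spherical roots of $G/K$ and of $G_0/K_0$ agree up to doubling some elements, uses the hypothesis $\Sigma(G_0/K_0)\cap 2S=\varnothing$ only to conclude $\Sigma(G/K)\cap S=\Sigma(G_0/K_0)\cap S$, and then obtains the bijection on colors from Luna's combinatorial description of $\Delta(X)$ in terms of $S^p(X)$ and $\Sigma(X)\cap S$; the second assertion is, exactly as in your last paragraph, linearity of~(\ref{eq:sphroot}) in $\sigma$. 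You instead aim to prove $Z\subseteq\overline K$ and then quote Remark~\ref{rem:sphericalclosure}.

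The gap is in the step you yourself flag as the delicate one. Proposition~\ref{prop:augmentations} is a bijection $J\mapsto\Xi(G/J)$ whose domain is the set of subgroups \emph{already known} to satisfy $K\subseteq J\subseteq\overline K$; it does not assert that a subgroup $J$ with $K\subseteq J\subseteq N_GK$ and $\Xi(G/J)\supseteq\Xi(G/\overline K)$ must be contained in $\overline K$. That reverse implication (with $J=KZ$) is precisely what you need, and as written the reduction is circular: to place $KZ$ in the domain of the bijection you must already know $KZ\subseteq\overline K$. Your subsequent case analysis correctly yields the lattice inclusion $\Span_\ZZ\Sigma^{sc}(G/K)\subseteq\Xi(G_0/K_0)$, but that alone does not give the group inclusion. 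The conclusion $Z\subseteq\overline K$ is nevertheless provable directly and without the lattice detour: for $z\in Z$ the $G$-equivariant automorphism $gK\mapsto gzK$ of $G/K$ coincides with left translation by the central element $z$, which rescales the global equation $f_D$ of every color by a nonzero constant (namely a value of $\omega_D$ at $z$) and hence fixes each color; so $Z$ lies in the kernel of the action of $N_GK$ on $\Delta(G/K)$, i.e.\ in $\overline K$. With that substitution your route goes through. Two smaller points: the rescaling factors lying in $\{1,2\}$ (and not $1/2$) is the same nontrivial input the paper takes from Knop and Wasserman's tables, so it should be cited rather than called standard; and Lemma~\ref{lemma:sphroot} is stated for the simply connected $G$, so~(\ref{eq:sphroot}) for $\sigma\in\Sigma(G_0/K_0)$ is better deduced from the case of $\Sigma(G/K)$ by the linearity you already use than by applying the lemma to $G_0/K_0$ directly.
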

\begin{proof}
We consider $G_0/K_0$ naturally as a $G$-variety, i.e.\ as $G/\widetilde K$ where $\widetilde K$ is the inverse image of $K_0$ in $G$. Since $\widetilde K$ normalizes $K$, by~\cite[Theorem~4.4 and Theorem~6.1]{MR1131314} the spherical roots of $G_0/K_0$ and $G/K$ are the same, up to replacing some elements by positive multiples. Moreover, if two corresponding spherical roots are different, then the spherical root of $G_0/K_0$ is the double of the spherical root of $G/K$ (see e.g.\ Wasserman's tables in~\cite{MR1424449}), and we conclude that $\Sigma(G/K)\cap S = \Sigma(G_0/K_0)\cap S$. Then, by the combinatorial description of the set of colors of a spherical variety in \cite[Section~2.3]{MR1896179}, we obtain the first assertion of the proposition. The second assertion follows from the same considerations, because the equalities~(\ref{eq:sphroot}) are linear in $\sigma$.
\end{proof}


\section{On parabolic subgroups of spherical subgroups}

Let $H$ and $P$ be spherical subgroups of $G$, such that $P\subseteq H$. In this section we prove a sufficient combinatorial condition for $P$ to be a parabolic subgroup of $H$. Set $X=G/P$ and $Y=G/H$.

\begin{proposition}\label{prop:PinH}
Suppose $|\Sigma(X)|=\rk\Xi(X)$ and $|\Sigma(Y)|=\rk\Xi(Y)$, and suppose that for all proper subsets $\Sigma'\subsetneq \Sigma(X)$ we have
\[
\Sigma(Y) \not\subseteq \Span_{\QQ_{\geq 0}}\Sigma'.
\]
Then $P$ is a parabolic subgroup of $H$.
\end{proposition}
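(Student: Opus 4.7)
The plan is to show the fiber $H/P$ of the natural $G$-equivariant morphism $\pi\colon X = G/P \to Y = G/H$ is complete, which is equivalent to $P$ being parabolic in $H$. Assuming $H$ connected (so $\pi$ has connected fibers), I would apply Proposition~\ref{prop:morphisms} to $\pi$ to obtain a distinguished subset $\Delta_\pi\subseteq\Delta(X)$ satisfying
\[
\Span_{\QQ_{\geq 0}}\Sigma(Y) = \Span_{\QQ_{\geq 0}}\Sigma(X)\cap\bigcap_{D\in\Delta_\pi}\ker\rho_X(D). \qquad (\ast)
\]
The first hypothesis makes $\Sigma(X)$ a $\QQ$-basis of $\Xi(X)_\QQ$, so $\V(X)$ is strictly convex (its linear part is trivial), and inspection of the proof of Proposition~\ref{prop:morphisms} yields $\Xi(Y)_\QQ = \bigcap_{D\in\Delta_\pi}\ker\rho_X(D)$ as a subspace of $\Xi(X)_\QQ$.

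The key combinatorial step is to show that the third hypothesis forces $\Delta_\pi = \varnothing$. I would argue by contradiction: suppose there exists a non-zero element $c = \sum_{D\in\Delta_\pi} c_D\rho_X(D)$ with each $c_D\geq 0$ such that $c(\sigma)\geq 0$ for every $\sigma\in\Sigma(X)$. For any $\tau\in\Sigma(Y)$, $(\ast)$ gives $c(\tau) = 0$; expanding $\tau=\sum_\sigma a_{\tau,\sigma}\sigma$ uniquely with $a_{\tau,\sigma}\geq 0$ (using that $\Sigma(X)$ is a basis), the non-negativity of every term in $c(\tau) = \sum_\sigma a_{\tau,\sigma}c(\sigma)$ forces $a_{\tau,\sigma} > 0 \Rightarrow c(\sigma) = 0$. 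Letting $\Sigma' = \{\sigma\in\Sigma(X) : c(\sigma)=0\}$, since $c\neq 0$ and $\Sigma(X)$ is a basis of $\Xi(X)_\QQ$, $\Sigma'$ is a proper subset of $\Sigma(X)$; but $\Sigma(Y)\subseteq\Span_{\QQ_{\geq 0}}\Sigma'$, contradicting the third hypothesis. Hence no such non-zero $c$ exists; under the strict convexity of $\V(X)$, Knop's characterization of distinguished subsets \cite{MR1131314} then forces $\Delta_\pi=\varnothing$.

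With $\Delta_\pi=\varnothing$, equation $(\ast)$ gives $\Xi(X)_\QQ = \Xi(Y)_\QQ$, and the bijection $\Delta(X)\leftrightarrow\Delta(Y)$ from Proposition~\ref{prop:morphisms} identifies the homogeneous spherical data of $G/P$ and $G/H$ up to rescaling of spherical roots and commensurability of lattices. Losev's uniqueness theorem \cite{MR2495078} then yields $P^\circ$ and $H^\circ$ conjugate in $G$; since $P\subseteq H$ and they have equal dimension, $P\supseteq H^\circ$, so $P$ contains a Borel of $H$ and is therefore parabolic in $H$. The main obstacle I expect is the final step of the combinatorial argument—going from ``no non-zero non-negative combination of $\rho_X(\Delta_\pi)$ lies in $-\V(X)$'' to ``$\Delta_\pi = \varnothing$''—which requires interpreting distinguishedness in the strong sense suggested by Knop's characterization, and carefully invoking the strict convexity of $\V(X)$ supplied by the first hypothesis.
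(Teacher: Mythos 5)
Your proof breaks at the claim that the third hypothesis forces $\Delta_\pi=\varnothing$. Your combinatorial computation is correct as far as it goes: it shows that any non-negative combination $c=\sum_{D\in\Delta_\pi}c_D\rho_X(D)$ that is non-negative on $\Sigma(X)$ must be \emph{zero}. But a distinguished subset need not admit a \emph{non-zero} witnessing combination; the combination $\sum c_D\rho_X(D)=0$ with all $c_D>0$ is perfectly admissible, and this is exactly what happens in the situations the proposition is designed for. Concretely, in the example of Section~\ref{s:example} the morphism $G/P\to G/H$ corresponds to the distinguished set $\Delta_\pi=\{D_n,D_{n+1}\}$ with $\rho(D_n)+\rho(D_{n+1})=0$, and all three hypotheses of the proposition hold there. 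More structurally: since $\V(X)$ is strictly convex, the proof of Proposition~\ref{prop:morphisms} gives $N_\pi=\Span_\QQ\rho_X(\Delta_\pi)$, so $\Delta_\pi=\varnothing$ would force $\Xi(X)_\QQ=\Xi(Y)_\QQ$, i.e.\ $\rk\Xi(G/P)=\rk\Xi(G/H)$ --- false whenever $P$ is a proper parabolic of $H$ (in the example the ranks are $3$ and $2$). There is no version of Knop's characterization that bridges ``every witness is zero'' to ``$\Delta_\pi$ is empty''.

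The endgame is also self-defeating: if you did obtain $P^\circ$ conjugate to $H^\circ$ together with $P\subseteq H$, you would get $P\supseteq H^\circ$, i.e.\ $P$ of finite index in $H$, which proves far too much --- the proposition is invoked throughout the appendices precisely for \emph{proper} parabolic subgroups $P\subsetneq H$. The paper's argument is of a different nature: it builds the canonical completions $\overline X$ and $\overline Y$ (possible because $\V(X)$ and $\V(Y)$ are strictly convex by the first two hypotheses), extends $\pi$ to $\overline\varphi\colon\overline X\to\overline Y$, and observes that $\overline\varphi\inv(eH)$ is a completion of $H/P$. If it were strictly larger than $H/P$, some boundary $G$-orbit $Z\subsetneq\overline X$ would dominate $Y$; applying~(\ref{eq:Sigmaquotient}) to $\overline\varphi|_Z$ gives $\Sigma(Y)\subseteq\Span_{\QQ_{\ge0}}\Sigma(Z)$, while Knop's result forces $\Sigma(Z)\subsetneq\Sigma(X)$ --- exactly what the third hypothesis forbids. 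So the hypothesis is used to kill boundary orbits of a completion, not to kill colors of the quotient map.
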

\begin{proof}
Since $|\Sigma(X)|=\rk\Xi(X)$ and $\Sigma(X)$ is linearly independent, the dual cone of $\QQ_{\geq0}\Sigma(X)$ in $\Hom_\ZZ(\Xi(X),\QQ)$ is strictly convex. By \cite[Theorem~3.1]{MR1131314} there exists a $G$-equivariant completion $\overline X$ of $X$ (sometimes called the {\em canonical completion}) with a single closed $G$-orbit, such that no color of $\overline X$ contains any $G$-orbit, and such that $\rho_X(D)$ generate the convex cone $\V(X)$, for $D$ varying among the $G$-stable prime divisors of $\overline X$. Thanks to  Knop~\cite[Proposition~6.1]{MR3218807}, for all $G$-orbits $Z$ of $\overline X$ different from $X$ we have $\Sigma(Z)\subsetneq \Sigma(X)$.

The same results can be applied to $Y$, obtaining a $G$-equivariant completion $\overline Y$ with similar properties. By~\cite[Theorem~4.1]{MR1131314}, the natural morphism $\varphi\colon X\to Y$ extends to a $G$-equivariant morphism $\overline\varphi\colon \overline X\to \overline Y$. Then the fiber $\overline\varphi\inv(eH)$ over the point $eH\in Y\subseteq \overline Y$ is a completion of $H/P$.

We claim that $\overline\varphi\inv(eH)=H/P$, which will conclude the proof because it implies that $H/P$ is a complete variety. To show the claim, we proceed by contradiction. Suppose $\overline\varphi\inv(eH)\supsetneq H/P$, then there exists a $G$-orbit $Z$ contained in $\overline X$ and different from $X$, such that $\overline\varphi(Z)=Y$. For the morphism $\overline\varphi|_Z\colon Z\to Y$, the equality~(\ref{eq:Sigmaquotient}) takes the form
\[
\Span_{\QQ_{\geq0}}\Sigma(Y) = \left(\Span_{\QQ_{\geq0}}\Sigma(Z)\right)\cap\left(\bigcap_{D\in\Delta_{\overline\varphi|_Z}} \ker\rho_Z(D)\right).
\]
In particular $\Sigma(Y)\subseteq\Span_{\QQ_{\geq 0}}\Sigma(Z)$, contradicting our hypotheses and concluding the proof.
\end{proof}


\section{An example}\label{s:example}

In this section we give an example of a multiplicity free system $(G,H,P)$ and the computation of the corresponding extended weight monoid. We fix $G=\Spin(2n+2)$ with $n\geq 3$, and we want to consider all possible spherical homogeneous space $G/P$ where $P$ is a parabolic subgroup of $H$, and the latter is a symmetric subgroup of semisimple type $\mathsf D_n$. Thanks to Proposition~\ref{prop:G0}, it would also be possible to use the classical group $\SO(2n+2)$ instead of its universal cover $G$, as long as one works in the vector spaces $\Chi(T)_\QQ$ and $\Chi(P)_\QQ$.

In $G$ we fix $B$, $B^-$ and $T$ as in the previous sections; we denote the simple roots of $G$ by $\alpha_1,\ldots,\alpha_{n+1}$, numbered as in Bourbaki, with the corresponding fundamental dominant weights denoted by $\omega_1,\ldots,\omega_{n+1}$.

We take $H$ to be the Levi subgroup containing $T$ of the parabolic subgroup of $G$ containing $B$ and obtained by omitting the first simple root $\alpha_{1}$ of $G$. Then $T$ is a maximal torus of $H$, and $B\cap H$ is a Borel subgroup of $H$.

We denote the corresponding simple roots of $H$ by $\beta_1,\ldots,\beta_n$, and by $\varpi_1,\ldots,\varpi_n$ the corresponding fundamental dominant weights. The latter are defined as the elements of $\Chi(T)_\QQ$ that have the correct pairing with the simple coroots of $H$ and are zero on the subspace of $\Chi(T)_\QQ^*$ corresponding to the connected center of $H$. Then we have $\beta_{i}=\alpha_{i+1}$ and $\varpi_{i}=\omega_{i+1}$ for $i\in\{1,\ldots,n\}$. In addition, we set $\varpi_0=\omega_1$, which is the restriction to $T$ of a generator of the character group of $H$.  

The parabolic subgroups of $H$ are taken to contain $B^-\cap H$. Given such a subgroup $P$, we denote by $I$ the set of simple roots of $H$ that are not roots of $P$.

We have the following possibilities for $I$ (see~\cite{MR3127988}):
\begin{enumerate}
\item\label{sym3a} $I=\{\beta_{n-1}\}$,
\item\label{sym3b} $I=\{\beta_n\}$.
\end{enumerate}

The subgroup $P$ of case~(\ref{sym3a})
is obtained by applying the outer automorphism of $G$ that interchanges $\alpha_{n}$ and $\alpha_{n+1}$ to the subgroup $P$ of case~(\ref{sym3b}). Also, these two parabolic subgroups of $H$ are conjugated in $G$ (see below). For this reason we only discuss case~(\ref{sym3b}), and we notice that the corresponding $P$ appears as case 53 of \cite{MR2183057}, with ``$n$'' in loc.cit.\ having the same meaning as here.

This yields
\[
\Sigma(G/P) = \{\alpha_1,\sigma_1=\alpha_{2,n}, \sigma_2=\alpha_{2,n-1}+\alpha_{n+1}\}
\]
and
\[
\Delta(G/P) = \{ D_1^+, D_1^-, D_2, D_{n}, D_{n+1} \},
\]
where for any $i\in\NN$ we denote by $D_i$ a color moved only by the simple root $\alpha_i$ if $\alpha_i$ is not a spherical root, and we denote by $D_i^+,D_i^-$ the two colors moved by the simple root $\alpha_i$ if $\alpha_i$ is also a spherical root. 

The Cartan pairing is
\[
\begin{array}{c|ccc}
      & \alpha_1 & \sigma_1 & \sigma_2 \\
\hline
D_1^+ & 1 & -1 & 0 \\
D_1^- & 1 & 0 & -1 \\
D_2 & -1 & 1 & 1 \\
D_{n} & 0 & 1 & -1 \\
D_{n+1} & 0 & -1 & 1
\end{array}
\]
The lattice $\Xi(G/P)$ is generated by $\Sigma(G/P)$. For later use, it is useful to notice that $\Chi(T)/\Xi(G/P)$ is not torsion-free: its torsion subgroup has order $2$ and it is generated by the class of the element $\tau=\frac12(\sigma_1-\sigma_2)$. This element takes non-integer values on two of the above colors.

These data for $G/P$ can also be verified directly using our results, let us give the details. It is elementary to check that they satisfy Luna's axioms in~\cite[Section~2]{MR1896179}. So, by the classification of spherical homogeneous spaces, they correspond to a spherical subgroup $K$ of $G$. The shape of the lattice $\Xi(G/K)$ implies that $K$ is connected. Indeed, the quotient $\Xi(G/K^\circ)/\Xi(G/K)$ is always torsion, and all elements of $\Xi(G/K^\circ)$ must take anyway integer values on the valuations of the colors of $G/K$ (this is clear just considering the pull-back of divisors from $G/K$ to $G/K^\circ$). Thanks to the above remark about $\Xi(G/P)$ and the element $\tau$, we conclude that $\Xi(G/K^\circ)=\Xi(G/K)$. Then $K^\circ=K$ by~\cite[Lemma~2.4]{MR2785497}.

The set $\{ D_n, D_{n+1} \}$ is easily seen to be distinguished, so it corresponds to an inclusion $K\subseteq R$ with $R$ a connected spherical subgroup of $G$. The spherical roots, the colors and the lattice of $G/R$ are computed using Proposition~\ref{prop:morphisms}, and they turn out to be the same as $H$ (see~\cite{MR3345829}, case~14), so we may assume $R=H$. Proposition~\ref{prop:PinH} assures that $K$ is conjugated to a parabolic subgroup of $H$. It remains to decide whether it's our $P$ or the one given by $I=\{\beta_{n-1}\}$, but we notice that the homogeneous spherical datum of $G/K$ remains unchanged if we exchange $\alpha_n$ with $\alpha_{n+1}$. It follows that these two parabolic subgroups of $H$ are both conjugated to $K$ in $G$, and we simply assume $K=P$.

We take $Q$ to be the parabolic subgroup of $G$ containing $B^-$ and such that only $\alpha_1$ and $\alpha_{n+1}$ are not simple roots of its Levi subgroup $L_Q$ (with the same notations as in Section~\ref{s:extwm}). Then $Q$ is a parabolic subgroup of $G$ minimal containing $P$. By looking at which simple root moves which color, one sees immediately that the colors of $G/P$ mapping not dominantly to $G/Q$ are $D_{n+1}$ and either $D_1^+$ or $D_1^-$. Since $\{D_1^+,D_2,D_{n+1}\}$ is not a parabolic subset of colors, we conclude that $D_1^-$ is mapped not dominantly to $G/Q$.

The above implies that $\chi_{D_1^-}$ and $\chi_{D_{n+1}}$ are the extension to $P$ of the characters resp.\ $-\omega_1$ and $-\omega_{n+1}$ of $T$, thanks to the Peter-Weyl theorem. For simplicity we do not distinguish notationally these characters from their extensions, and it is convenient to express them in terms of the above weights of $H$, obtaining 
\[
\begin{array}{lcl}
\chi_{D_1^-} & = & -\varpi_0,\\
\chi_{D_{n+1}} & = & -\varpi_{n}.
\end{array}
\]

The equalities~(\ref{eq:sphroot}) of Lemma~\ref{lemma:sphroot} yield the $P$-weights of the other colors:
\[
\begin{array}{lcl}
\chi_{D_1^+} & = & \varpi_0,\\
\chi_{D_2} & = & 0,\\
\chi_{D_{n}} & = & \varpi_0-\varpi_{n},\\
\end{array}
\]

Finally, Lemma~\ref{lemma:foschi} yields
\[
\begin{array}{lcl}
\omega_{D_1^+} & = & \omega_1,\\
\omega_{D_1^-} & = & \omega_{1}\\
\omega_{D_2} & = & \omega_{2},\\
\omega_{D_{n}} & = & \omega_{n},\\
\omega_{D_{n+1}} & = & \omega_{n+1}.
\end{array}
\]

This finishes the computation of $\widetilde\Gamma(G/P)$.


\section{Application to orthogonal polynomials}\label{s:ops}
 
Throughout this section $(G,H,P)$ is a strictly indecomposable multiplicity free system with $G$ simply connected and $G,H$ connected, and $\chi\in\Chi_{+}(T_{H})\cap\Chi(P)$. As we have discussed in Section \ref{section:Introduction}, the irreducible $H$-representation $\pi_{H,\chi}:H\to\GL(V_{H,\chi})$ of highest weight $\chi$ induces multiplicity free to $G$. In this section we study the space
$$E^{\chi}:=(\bbC[G]\otimes\End(V_{H,\chi}))^{H\times H},$$
where $H\times H$ acts on the space $\bbC[G]$ by the left and right regular representation and on the space $\End(V_{H,\chi})$ by left and right matrix-multiplication.
In particular, if $\chi=0$, then $E^{0}$ is the algebra of $H$-biinvariant functions on $G$. The space $E^{\chi}$ is a module over $E^{0}$. In this section we study the algebra structure of $E^{0}$ and the $E^{0}$-module structure of $E^{\chi}$. It turns out that in most of the cases $E^{0}$ is a polynomial algebra and that $E^{\chi}$ is freely and finitely generated as an $E^{0}$-module. This structure gives rise to an abundance of examples of matrix-valued orthogonal polynomials in several variables, generalizing the existing examples from \cite{MR3550926,MR4053617,MR3801483}.

\subsection{Multiplicity free induction}

To describe the irreducible $G$-representations $\pi_{G,\lambda}$ that contain $\pi_{H,\chi}$ upon restriction to $H$ we introduce the following definition.

\begin{definition}
With the notation from above,
$$\calX_{+}(T_{G};H,\chi):=\{\lambda\in \Chi_{+}(T_{G})|\,\dim\Hom_{H}(V_{H,\chi},V_{G,\lambda})=1\}.$$
We refer to this set as the {\em $\chi$-well}.
\end{definition}
Note that $\lambda\in\calX_{+}(T_{G};H,\chi)$ if and only if $(\lambda,-\chi)\in\widetilde{\Gamma}(G/P)$.

\subsubsection{Analysis of the $0$-well.}
The $0$-well $\calX(T_{G};H,0)$ of the spherical pair $(G,H)$ is equal to the weight monoid $\Gamma(G/H)$ consisting of highest weights of irreducible $G$-submodules of $\bbC[G/H]$. If $H$ has a zero-dimensional center, then $\Gamma(G/H)$ can be identified with $\widetilde{\Gamma}(G/H)$ and it follows from Proposition \ref{prop:generators} that the weight monoid of $G/H$ is freely and finitely generated.
The generators of $\widetilde{\Gamma}(G/H)$ are recorded in \cite[Tabelle 1]{MR528837} for $G$ simple and \cite[Table 1]{MR2779106} for $G$ semisimple but not simple.

If the center of $H$ is not zero-dimensional, then it follows from inspection of the classification of strictly indecomposable spherical pairs \cite{MR906369,MR528837}, that the center of $H$ is one-dimensional. If $(G,H)$ is symmetric, then $\Gamma(G/H)$ is freely generated \cite[Lemme 3.4]{MR1076251}.

The spherical pairs $(G,H)$ that are left, i.e.~the spherical but non-symmetric ones for which $H$ has a one-dimensional center, are recorded in Table \ref{table: affine spherical dimZ(H)=1}. In the third column we provide the generators of $\widetilde{\Gamma}(G/H)$, which are taken from \cite{MR2779106,MR3053223,MR1424449}

\begin{tiny}
\begin{table}[h!]
\begin{center}
$$
\begin{array}{c|c|c|c}
& G & H & \mbox{generators of $\widetilde{\Gamma}(G/H)$}\\ \hline
1& \SL(2n+1) & \Sp(2n)\times\bbC^{\times} & (\omega_{2i-1},-\frac{n+1-i}{n}\chi), 1\leq i \leq n\\
& n\ge2 && (\omega_{2j},-\frac{j}{n}\chi), 1\leq j \leq n\\ \hline
2 & \Sp(2n) & \Sp(2n-2)\times\bbC^{\times}& (\omega_{1},\omega_{1}), (\omega_{1},-\omega_{1}),(\omega_{2},0)\\
\hline
3 & \Spin(10) & \Spin(7)\times\bbC^{\times} & (\omega_{1},2\chi),(\omega_{1},-2\chi)\\ 
&&& (\omega_{2},0),(\omega_{4},\chi),(\omega_{5},-\chi)\\ \hline \hline
4 & \SL(n+1)\times\SL(n) & \SL(n)\times\bbC^{\times} & (\omega_{i}+\omega'_{n+1-i},(n+1-i)\chi),1\leq i\leq n\\
&&& (\omega_{j}+\omega'_{n-j},-j\chi),1\leq j\leq n \\\hline
5 & \SL(n)\times\Sp(2m) & \bbC^{\times}\cdot(\SL(n-2)\times\SL(2)\times\Sp(2m-2)) &
(\omega_{n-2},2\chi),(\omega_{n-1}+\omega'_{1},\chi), \\
&n\ge3,m\ge1&&(\omega_{1}+\omega'_{1},-\chi),(\omega_{2},-2\chi) \\
&&&(\omega_{1}+\omega_{n-1},0)\quad(n\ge4)\\
&&&(\omega'_{2},0)\quad m\ge2
\end{array}
$$
\caption{The strictly indecomposable spherical pairs that are not symmetric with $\dim(Z(H))=1$ and the generators of their extended weight monoids. In the fourth row we employ the convention $\omega'_{0}=\omega'_{n}=0$. }
\label{table: affine spherical dimZ(H)=1}
\end{center}
\end{table}
\end{tiny}

\begin{lemma}\label{lemma: freely generated} The pair $(\Sp(2n),\Sp(2n-2)\times\bbC^{\times})$ is the only pair $(G,H)$ in Table \ref{table: affine spherical dimZ(H)=1} whose weight monoid $\Gamma(G/H)$ is freely generated.
\end{lemma}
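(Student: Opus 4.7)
The proof is a case-by-case computation for the five rows of Table~\ref{table: affine spherical dimZ(H)=1}. By Proposition~\ref{prop:generators}, the extended weight monoid $\widetilde\Gamma(G/H)$ is freely generated by the pairs $(\omega_D,\chi_D)$ displayed in the fourth column. Since $\dim Z(H)=1$, all the $\chi_D$ are proportional to a single rational character of $H$, say $\chi_D=c_D\chi_0$ with $c_D\in\QQ$. Hence
\[
\Gamma(G/H)=\Bigl\{\textstyle\sum_D n_D\omega_D : n_D\in\NN,\ \sum_D n_D c_D=0\Bigr\},
\]
and the problem reduces, row by row, to computing the Hilbert basis of the kernel sub-monoid $M=\{(n_D)\in\NN^{|\Delta(G/H)|}:\sum_D n_D c_D=0\}$ and checking whether its image in $\Chi(T_G)$ is $\ZZ$-linearly independent.

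For the pair $(\Sp(2n),\Sp(2n-2)\times\bbC^\times)$ the coefficients $c_D$ are $+1,-1,0$, so the kernel equation is $n_1=n_2$ and the Hilbert basis of $M$ is $\{(1,1,0),(0,0,1)\}$. The image in $\Chi(T_G)$ is $\{2\omega_1,\omega_2\}$, a $\ZZ$-linearly independent pair, and one verifies that neither element can be strictly decomposed inside $\Gamma(G/H)$ (the weight $\omega_1$ itself is not in $\Gamma(G/H)$ because the equation $n_1=n_2$ forces even sum). Therefore $\Gamma(G/H)=\NN\cdot 2\omega_1\oplus\NN\cdot\omega_2$ is freely generated, which settles the ``if'' direction.

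For each of the remaining rows the plan is to exhibit an explicit non-trivial additive relation among four indecomposable elements of $\Gamma(G/H)$. The recipe is uniform: pick two pairs of generators of $\widetilde\Gamma(G/H)$ whose $c_D$'s have opposite signs (and coprime absolute values suitable for cross-balancing), then form the obvious balancings in two different ways. In row~3 this produces $u_1=2\omega_1$, $u_2=\omega_4+\omega_5$, $u_3=\omega_1+2\omega_4$, $u_4=\omega_1+2\omega_5$, satisfying
\[
u_3+u_4=u_1+2u_2.
\]
In row~4 it produces $u_1=\omega_1+\omega_n$, $u_2=\omega_1+\omega_n+\omega'_1+\omega'_{n-1}$, $u_3=(n+1)\omega_1+n\omega'_{n-1}$, $u_4=(n+1)\omega_n+n\omega'_1$, satisfying $u_3+u_4=u_1+nu_2$. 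Rows~1 and~5 are handled in the same way, by cross-balancing the extreme generators (those whose $|c_D|$ is maximal) against the ones with minimal $|c_D|$.

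The one technical point is the verification that each $u_i$ appearing in such a relation is indecomposable in $\Gamma(G/H)$; otherwise the relation need not preclude freeness. This is a finite inspection based on the sign pattern of the $c_D$ in each row: any strictly shorter $\NN$-combination of the original generators that both balances the $c_D$'s and yields an element of $\Gamma(G/H)$ dividing $u_i$ is ruled out by the simple structure of the $c_D$'s (in particular, the denominators $n$ appearing in rows~1 and~4 force the minimum balancings to have exactly the lengths used above). With this minimality established, the displayed relation contradicts free generation of $\Gamma(G/H)$, which completes the ``only if'' direction and finishes the proof.
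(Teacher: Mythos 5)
Your proof is correct, and for the non-freeness direction it takes a genuinely different route from the paper's. Both arguments share the same starting point: Proposition~\ref{prop:generators} makes $\widetilde\Gamma(G/H)$ free on the tabulated generators, so $\Gamma(G/H)$ is (isomorphic to) the submonoid of $\NN^{|\Delta(G/H)|}$ cut out by the single character-balancing equation, and the free case $(\Sp(2n),\Sp(2n-2)\times\CC^{\times})$ is settled in the same way in both proofs. For the remaining rows the paper argues by \emph{counting}: it lists the elements of $\Gamma(G/H)$ that are sums of two or three fundamental weights, observes that each is indecomposable, and concludes that there are too many of them to all be free generators. You instead exhibit a single explicit additive relation $u_3+u_4=u_1+ku_2$ among four indecomposables, which contradicts uniqueness of representations in a free monoid. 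I checked your relations for rows 3 and 4 and the recipe for rows 1 and 5; they are all valid, and the indecomposability verifications do reduce, as you say, to sub-multiset checks inside the free monoid $\widetilde\Gamma(G/H)$, where decompositions are unique. The counting approach avoids verifying an identity but needs a sufficiently large enumeration; note that for the first row the paper's bound ``strictly more than $2n$'' is actually attained with equality at $n=2$ (one finds exactly four such elements), and the argument there survives only because the relevant comparison is with $\rk\ZZ\Gamma(G/H)=2n-1$ rather than $2n$. Your relation-based certificate is immune to this kind of edge case and uses only four elements per row, at the modest price of the indecomposability checks; the level of detail you leave to the reader for rows 1 and 5 is comparable to what the paper leaves for rows 3--5.
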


\begin{proof}
First note that the weight monoid of the pair $(\Sp(2n),\Sp(2n-2)\times\bbC^{\times})$ is freely generated by $2\omega_{1}$ and $\omega_{2}$. For the pair in the first line we have
$$\Chi_{+}(T_{G};H,0)=\left\{\sum_{i=1}^{2n}a_{i}\omega_{i}\left|\sum_{j=1}^{n}(n-j+1)a_{2j-1}=\sum_{j=1}^{n}ja_{2j}\right.\right\}.$$
The elements that are sums of two or three fundamental weights are
\begin{itemize}
\item $\omega_{i}+\omega_{2n+1-i}, 1\leq i\leq n$,
\item $\omega_{2i-1}+\omega_{2k}+\omega_{2\ell}$ for $1\le i,k,\ell\le n$ with $k+\ell+i=n+1$,
\item $\omega_{2k-1}+\omega_{2\ell-1}+\omega_{2i}$ with $1\le i,k,\ell\le n$ with $k+\ell+i=2(n+1)$,
\end{itemize}
and unless $n=1$ there are strictly more than $2n$ of these elements. Hence, for $n>1$, the weight monoid is not freely generated. Indeed, if it were, then one of these elements woul not be a generator. But none of these elements can be expressed as a linear combination of the others with coefficients in $\bbZ_{\ge0}$, a contradiction.
The arguments for lines 3--5 are similar and the details are left for the reader.
\end{proof}

\begin{remark}\label{remark: two colors}
Note that for a strictly indecomposable spherical pair $(G,H)$ the weight monoid $\Gamma(G/H)$ is freely generated if and only if there are at most two colors on $G/H$ with non-trivial $H$-weight. 
For the symmetric pairs this follows from \cite[Lemme 3.4]{MR1076251}, for the non-symmetric ones this follows from Lemma \ref{lemma: freely generated}.
Furthermore, note that whenever $G/H$ has more than $2$ colors with non-trivial $H$-weight, there are no non-trivial parabolic subgroups $P\subseteq H$ that remain spherical in $G$.
\end{remark}

\subsubsection{Analysis of a more general $\chi$-well.}

In this paragraph we assume that $\Gamma(G/H)$ is freely generated.
If $\lambda\in\calX_{+}(T_{G};H,\chi)$ and $\sigma\in\calX_{+}(T_{G};H,0)$, then $\lambda+\sigma\in\calX_{+}(T_{G};H,\chi)$ by means of the Cartan projection $V_{G,\lambda}\otimes V_{G,\sigma}\to V_{G,\lambda+\sigma}$, which is surjective and $G$-equivariant. This suggests that the $\chi$-well has certain minimal elements.

\begin{definition}\label{def: bottom}
Let
$$\calB_{+}(T_{G};H,\chi)=\{\lambda\in\calX_{+}(T_{G};H,\chi)|\forall \sigma\in\calX_{+}(T_{G};H,0):\lambda-\sigma\not\in\calX_{+}(T_{G};H,\chi)\},$$
which we call the {\em bottom} of the $\chi$-well.
\end{definition}
\begin{example}\label{example:G2A2}
Consider the spherical pair $(\mathsf G_2,\SL(3))$. The parabolic subgroup $P_{\{\alpha_{1}\}}\subset\SL(3)$ remains spherical in $\mathsf{G_{2}}$ and the extended weight monoid $\widetilde{\Gamma}(\SL(3)/P_{\{\alpha_{1}\}})$ is generated by $(\omega_{1},0),(\omega_{1},-\varpi_{1}),(\omega_{2},-\varpi_{1})$. Consider the dominant weight $3\varpi_{1}$ for $\SL(3)$. The $3\varpi_{1}$-well is given by $$a\omega_{1}+b\omega_{1}+c\omega_{2}\quad\mbox{with $a,b,c\in\bbZ_{\ge0}$ and $b+c=3$.}$$
The bottom of the $3\varpi_{1}$-well is given by
$$3\omega_{1},2\omega_{1}+\omega_{2},\omega_{1}+2\omega_{2},3\omega_{2}.$$
The $3\varpi_{1}$-well is illustrated in Figure \ref{figure: tensor G2}. The roots of $\SL(3)$ are the long roots of $\mathsf{G_{2}}$.
\end{example}
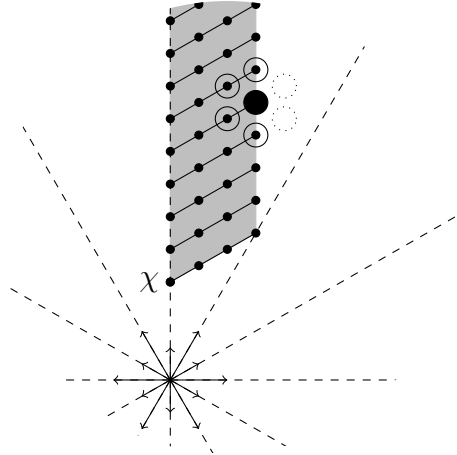
\begin{figure}[ht]


\begin{center}

\begin{tikzpicture}[scale=.75]
\pgfmathsetmacro\ax{1}
\pgfmathsetmacro\ay{0}
\pgfmathsetmacro\bx{1 * cos(120)}
\pgfmathsetmacro\by{1 * sin(120)}
\pgfmathsetmacro\lax{2*\ax/3 + \bx/3}
\pgfmathsetmacro\lay{2*\ay/3 + \by/3}
\pgfmathsetmacro\lbx{\ax/3 + 2*\bx/3}
\pgfmathsetmacro\lby{\ay/3 + 2*\by/3}




\begin{scope}
\clip (1,2.7) circle (4);
\draw [fill,lightgray] (0,10) -- (0,3*\lby) -- (3*\lax+3*\lbx,3*\lay+3*\lby) -- (3*\lax,15) -- cycle ;

\foreach \k in {1,...,12}
\draw[dashed] (0,0) -- (\k * 30 + 30:12);

\draw[thin,->] (0,0) -- (\ax,\ay);
\draw[thin,->] (0,0) -- (\bx,\by);
\draw[thin,->] (0,0) -- (-\ax,-\ay);
\draw[thin,->] (0,0) -- (-\bx,-\by);
\draw[thin,->] (0,0) -- (\ax+\bx,\ay+\by);
\draw[thin,->] (0,0) -- (-\ax-\bx,-\ay-\by);
\draw[thin,->] (0,0) -- (\lax,\lay);
\draw[thin,->] (0,0) -- (\lbx,\lby);
\draw[thin,->] (0,0) -- (-\lax,-\lay);
\draw[thin,->] (0,0) -- (-\lbx,-\lby);
\draw[thin,->] (0,0) -- (\lax-\lbx,\lay-\lby);
\draw[thin,->] (0,0) -- (\lbx-\lax,\lby-\lay);

%
%
%
%
%
%
\foreach \a in {0,...,10}
\draw[thin] (\a*\lbx+3*\lbx, \a*\lby+3*\lby) -- (3*\lax+3*\lbx+\a*\lbx,3*\lay+3*\lby+\a*\lby);
\foreach \v in {0,...,3}
\foreach \d in {0,...,10}
\draw[fill](3*\lbx+\d*\lbx+\v*\lax,3*\lby+\d*\lby+\v*\lay) circle (2pt);
\draw[fill] (0,3*\lby) circle (2pt) node[black,left] {\(\chi\)};
\draw (3*\lax+6*\lbx,3*\lay+6*\lby) circle (6pt);
\draw[fill, black] (3*\lax+7*\lbx,3*\lay+7*\lby) circle (6pt);
\draw (3*\lax+8*\lbx,3*\lay+8*\lby) circle (6pt);
\draw (2*\lax+7*\lbx,2*\lay+7*\lby) circle (6pt);
\draw (2*\lax+8*\lbx,2*\lay+8*\lby) circle (6pt);
\draw[dotted] (4*\lax+6*\lbx,4*\lay+6*\lby) circle (6pt);
\draw[dotted] (4*\lax+7*\lbx,4*\lay+7*\lby) circle (6pt);
\end{scope}
\end{tikzpicture}
\end{center}
\caption{The $\chi=3\varpi_{1}$-well for the pair $(\mathsf{G_{2}},\SL(3))$. The nodes are explained in Example \ref{example:tensor}}\label{figure: tensor G2}
\end{figure}


If the center of $H$ is one-dimensional, then there are two colors $D_{0}$ and $D_{1}$ of $G/H$ whose $H$-characters are non-trivial, by Remark \ref{remark: two colors}. Let $(\omega_{D_{0}},\mu)$ and $(\omega_{D_{1}},-\mu)$ be the corresponding generators of $\widetilde{\Gamma}(G/H)$. Let $r=\rk(G/H)$ denote the spherical rank. If $r>1$, then the other colors of $G/H$ are denoted by $D_{2},\ldots,D_{r}$ and the corresponding generators of $\widetilde{\Gamma}(G/H)$ are denoted by $(\omega_{D_{i}},0)$ with $2\leq i\leq r$.

If the center of $H$ is zero-dimensional, then the colors of $G/H$ are denoted by $D_{1},\ldots, D_{r}$. The corresponding generators of $\widetilde{\Gamma}(G/H)$ are denoted by $(\omega_{D_{i}},0)$ with $1\leq i\leq r$.

In both cases the colors of $G/H$ can be pulled back to $G/P$ where they correspond to generators of $\widetilde{\Gamma}(G/P)$ given by the same weights. We denote the other colors of $G/P$ by $E_{1},\ldots,E_{s}$ and the corresponding generators by $(\omega_{E_{j}},\chi_{E_{j}})$ with $1\leq i \leq s$.

For an element $\lambda\in\calX_{+}(T_{G};H,\chi)$ we write 
\begin{eqnarray}\label{eqn: coeffs in EWS center}
(\lambda,-\chi)=a_{0}(\omega_{D_{0}},\mu)+a_{1}(\omega_{D_{1}},-\mu)+\sum_{i=2}^{r}a_{i}(\omega_{D_{i}},0)+\sum_{j=1}^{s}b_{j}(\omega_{E_{i}},\chi_{E_{j}})
\end{eqnarray}
if $H$ has a one dimensional center and
\begin{eqnarray}\label{eqn: coeffs in EWS no center}
(\lambda,-\chi)=\sum_{i=1}^{r}a_{i}(\omega_{D_{i}},0)+\sum_{j=1}^{s}b_{j}(\omega_{E_{i}},\chi_{E_{j}})
\end{eqnarray}
if $H$ has a zero-dimensional center, where the coefficients $a_{i}, b_{j}$ are in $\ZZ_{\geq0}$. 

\begin{lemma}\label{lemma: bottom}
An element $\lambda\in\calX_{+}(T_{G};H,\chi)$ is contained in $\calB_{+}(T_{G};H,\chi)$ if and only if $\min(a_{0},a_{1})=a_{2}=\ldots=a_{r}=0$ in case $\dim Z(H)=1$ or $a_{1}=\ldots=a_{r}=0$ in case $\dim Z(H)=0$. 
\end{lemma}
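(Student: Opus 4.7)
The plan rests on two ingredients: the freeness of $\widetilde\Gamma(G/P)$ given by Proposition~\ref{prop:generators}, which makes the expansions~(\ref{eqn: coeffs in EWS center}) and~(\ref{eqn: coeffs in EWS no center}) of $(\lambda,-\chi)$ unique, together with the explicit description of $\calX_+(T_G;H,0)=\Gamma(G/H)$ in terms of its own generators. Under the standing assumption that $\Gamma(G/H)$ is freely generated, requiring the $H$-character of $(\sigma,0)\in\widetilde\Gamma(G/H)$ to vanish forces a nonzero $\sigma\in\calX_+(T_G;H,0)$ to have the form $\sigma=c(\omega_{D_0}+\omega_{D_1})+\sum_{i\ge 2}c_i\omega_{D_i}$ when $\dim Z(H)=1$, and $\sigma=\sum_{i\ge 1}c_i\omega_{D_i}$ when $\dim Z(H)=0$, with $c,c_i\in\ZZ_{\ge 0}$. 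In the first case, the equality of the coefficients at $D_0$ and $D_1$ comes from cancelling the $\pm\mu$ in the $H$-coordinate.

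For the sufficient direction I would assume the vanishing conditions on the $a_i$ and take any nonzero $\sigma\in\calX_+(T_G;H,0)$. Suppose, for contradiction, that $\lambda-\sigma\in\calX_+(T_G;H,\chi)$. Then $(\lambda-\sigma,-\chi)\in\widetilde\Gamma(G/P)$, and summing with $(\sigma,0)\in\widetilde\Gamma(G/H)\subseteq\widetilde\Gamma(G/P)$ yields $(\lambda,-\chi)$. Since $(\sigma,0)$ already admits an expansion using only the pulled-back generators, with no $E_j$-contribution, uniqueness in the freely generated monoid $\widetilde\Gamma(G/P)$ identifies this with its decomposition there. Termwise comparison with~(\ref{eqn: coeffs in EWS center}) or~(\ref{eqn: coeffs in EWS no center}) gives $a_0\ge c$, $a_1\ge c$, and $a_i\ge c_i$; the hypothesis forces $c=0$ and every $c_i=0$, i.e.\ $\sigma=0$, a contradiction.

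The converse I handle by contrapositive. If $\min(a_0,a_1)\ge 1$, I take $\sigma=\omega_{D_0}+\omega_{D_1}$; if instead some $a_i\ge 1$ for $i\ge 2$ (or for $i\ge 1$ when $\dim Z(H)=0$), I take $\sigma=\omega_{D_i}$. In either case $\sigma$ is a nonzero element of $\calX_+(T_G;H,0)$, and reducing the appropriate coefficient in~(\ref{eqn: coeffs in EWS center}) or~(\ref{eqn: coeffs in EWS no center}) by one still produces a nonnegative integer combination of the generators of $\widetilde\Gamma(G/P)$; this realizes $(\lambda-\sigma,-\chi)$ as an element of $\widetilde\Gamma(G/P)$, so $\lambda-\sigma\in\calX_+(T_G;H,\chi)$ and hence $\lambda\notin\calB_+(T_G;H,\chi)$. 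The only slightly subtle point, shared by both directions, is the identification of the expansion of $(\sigma,0)$ inside the larger monoid $\widetilde\Gamma(G/P)$, which is exactly what the freeness from Proposition~\ref{prop:generators} delivers.
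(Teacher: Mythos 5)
Your proof is correct and takes essentially the same route as the paper's: your contrapositive step (if $\min(a_0,a_1)\ge 1$ or some $a_i\ge 1$, subtract $\omega_{D_0}+\omega_{D_1}$ or $\omega_{D_i}$ and stay in the $\chi$-well) is exactly the argument given there, and your termwise comparison of coefficients via the freeness of $\widetilde\Gamma(G/P)$ from Proposition~\ref{prop:generators} is precisely the content the paper compresses into ``the other direction is clear.'' No gaps.
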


\begin{proof}
If $\dim Z(H)=0$, then the statement is clear from the definition of the bottom of the $\chi$-well. Suppose that $\dim Z(H)=1$ and $\lambda\in\calB_{+}(T_{G};H,\chi)$. If $a_{i}\ne 0$ for some $2\leq i\leq r$, then $\lambda-\omega_{D_{i}}\in\calX_{+}(T_{G};H,\chi)$, a contradiction to $\lambda\in \calB_{+}(T_{G};H,\chi)$. If $\min(a_{0},a_{1})>0$, then $\lambda-(\omega_{D_{0}}+\omega_{D_{1}})\in\calX_{+}(T_{G};H,\chi)$, again a contradiction to $\lambda\in \calB_{+}(T_{G};H,\chi)$. The other direction is clear.
\end{proof}

\begin{proposition}\label{prop: chi well}
The $\chi$-well can be written as
$$\Chi_{+}(T_{G};H,\chi)=\calB_{+}(T_{G};H,\chi)+\calX_{+}(T_{G};H,0)$$
and the induced decomposition of any element is unique.
\end{proposition}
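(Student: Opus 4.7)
The strategy is to combine the free generation of $\widetilde\Gamma(G/P)$ from Proposition~\ref{prop:generators} with the characterization of the bottom given in Lemma~\ref{lemma: bottom}, exploiting the inclusion $\bbC[G/H]\hookrightarrow\bbC[G/P]$ coming from the fibration $G/P\to G/H$. The key preliminary observation is that the colors of $G/H$ pull back to the $D$-colors of $G/P$ with the same global equations and the same $B$-weights, so the generators $(\omega_{D_i},\chi_{D_i})$ of $\widetilde\Gamma(G/H)$ sit inside $\widetilde\Gamma(G/P)$ (with the $H$-character restricted to $P$). Consequently, for each $\sigma\in\calX_+(T_G;H,0)=\Gamma(G/H)$, the unique expansion of $(\sigma,0)$ in the free generators of $\widetilde\Gamma(G/P)$ is supported only on the $(\omega_{D_i},\chi_{D_i})$, with the additional relation $c_0=c_1$ when $\dim Z(H)=1$ (in order to kill the $H$-character) and no constraint beyond nonnegativity when $\dim Z(H)=0$.

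\textbf{Existence.} Given $\lambda\in\calX_+(T_G;H,\chi)$, write $(\lambda,-\chi)$ in the unique form~(\ref{eqn: coeffs in EWS center}) or~(\ref{eqn: coeffs in EWS no center}). In the case $\dim Z(H)=1$, let $m:=\min(a_0,a_1)$ and set
\[
\sigma := m(\omega_{D_0}+\omega_{D_1})+\sum_{i=2}^r a_i\omega_{D_i},\qquad \lambda_0:=\lambda-\sigma;
\]
the preliminary observation yields $\sigma\in\calX_+(T_G;H,0)$, and the free decomposition of $(\lambda_0,-\chi)$ has coefficients $a_0-m$ and $a_1-m$ on the generators for $D_0,D_1$, zero on the generators for $D_2,\ldots,D_r$, and $b_j$ on the $E_j$-generators. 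By Lemma~\ref{lemma: bottom}, this places $\lambda_0$ in $\calB_+(T_G;H,\chi)$. The case $\dim Z(H)=0$ is analogous, with $\sigma:=\sum_{i=1}^r a_i\omega_{D_i}$ and $\lambda_0:=\sum_{j=1}^s b_j\omega_{E_j}$.

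\textbf{Uniqueness and main obstacle.} If $\lambda=\lambda_0+\sigma=\lambda_0'+\sigma'$ with $\lambda_0,\lambda_0'\in\calB_+(T_G;H,\chi)$ and $\sigma,\sigma'\in\calX_+(T_G;H,0)$, then by the preliminary observation both $(\sigma,0)$ and $(\sigma',0)$ are supported on $D$-generators only. Equating the two resulting expansions of $(\lambda,-\chi)$ in the free generators of $\widetilde\Gamma(G/P)$ and invoking Proposition~\ref{prop:generators}, the coefficients on each $E_j$-generator of $\lambda_0$ and $\lambda_0'$ must match. The constraint of Lemma~\ref{lemma: bottom} then forces the $D$-coefficients of $\lambda_0$ and $\lambda_0'$ (hence of $\sigma$ and $\sigma'$) to be uniquely determined by those of $\lambda$, giving $\lambda_0=\lambda_0'$ and $\sigma=\sigma'$. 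The only non-bookkeeping step is the preliminary observation, which hinges on identifying the pullbacks of colors along $G/P\to G/H$ with colors of $G/P$ and checking the corresponding identification of the weights $(\omega_{D_i},\chi_{D_i})$ from Section~\ref{s:extwm}; once this is in place, the remainder is a routine verification on the free-monoid structure of $\widetilde\Gamma(G/P)$.
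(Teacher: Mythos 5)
Your proof is correct and follows essentially the same route as the paper: the uniqueness argument is exactly the paper's (the coefficients of $(\lambda,-\chi)$ in the free generators of $\widetilde\Gamma(G/P)$ determine both summands, via Lemma~\ref{lemma: bottom} and the fact that the $D$-generators are pulled back from $G/H$). The only cosmetic difference is that you establish existence constructively from the coefficient expansion, whereas the paper gets the inclusion $\subseteq$ from Definition~\ref{def: bottom} and $\supseteq$ from the Cartan projection; both rest on the same facts.
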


\begin{proof}
The equality of the two sets is clear, the one inclusion follows from Definition \ref{def: bottom}, while the other follows from the Cartan projection. To show uniqueness
we write $\lambda=b(\lambda)+s(\lambda)$ with $b(\lambda)\in\calB_{+}(T_{G};H,\chi)$ and $s(\lambda)\in\calX_{+}(T_{G};H,0)$.
By Lemma \ref{lemma: bottom} the coefficients of $(b(\lambda),-\chi)$ and $(s(\lambda),0)$ are uniquely determined by the coefficients of $(\lambda,-\chi)$. Hence the decomposition is unique.
\end{proof}

The action of $H$ on the annihilator $\lah^{\perp}\subseteq\lag^{*}$ of $\lah$ has a generic stabilizer $H_{*}\subseteq H$ which is reductive, see \cite{MR1029388}. It is known \cite{MR3801483} that the decomposition of the restriction of $\pi_{H,\chi}$ to $H_{*}$ is multiplicity free. Moreover, we can choose $T_{G}$ in such a way that $T_{H_{*}}=T_{G}\cap H_{*}$ is a maximal torus of $H_{*}$ and the weights $\lambda\in\Gamma(G/H)$ vanish on $T_{H_{*}}$. The reason is that $H_{*}$ is contained in a Levi subgroup of the parabolic subgroup adapted to $G/H$ (i.e.\ corresponding to the subset of simple roots $S^p(G/H)$), but it also contains the commutator subgroup of this Levi subgroup. Note that we can also choose a compatible Borel subgroup $B_{H_{*}}$ of $H_{*}$ that is contained in a Borel subgroup of $G$. This implies that restriction to $T_{H_{*}}$ induces a map $\Chi_{+}(T_{G})\to\Chi_{+}(T_{H_{*}}),\lambda\mapsto\lambda_{*}$.

As explained in \cite[Proposition 2.4]{MR3801483}, the restriction of a weight $\lambda\in\calX_{+}(T_{G};H,\chi)$ to $T_{H_{*}}$ is the highest weight of an irreducible representation of $H_{*}$ that is contained in the restriction of $\pi_{H,\chi}$ to $H_{*}$. In loc.~cit.~it is shown that all irreducible $H_{*}$-representations in the decomposition of $\pi_{H,\chi}$ can be obtained in this way. For later reference we record the following result. 

\begin{lemma}\label{lemma on M-types}
The elements in $\calB_{+}(T_{G};H,\chi)$ are in one-to-one-correspondence with the $H_{*}$-types in the restriction $\pi^{H}_{\chi}|_{H_{*}}$, via the map $\lambda\mapsto\lambda_{*}$.
\end{lemma}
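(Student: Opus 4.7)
The plan is to combine Proposition~\ref{prop: chi well} with the result recalled from \cite[Proposition~2.4]{MR3801483}, using the vanishing of the generators of the $0$-well on $T_{H_{*}}$ (stated just before the lemma).

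For well-definedness and surjectivity, observe that by the cited result, for every $\lambda\in\calX_{+}(T_{G};H,\chi)$ the restriction $\lambda_{*}$ is the highest weight of an irreducible $H_{*}$-constituent of $\pi_{H,\chi}|_{H_{*}}$, and every such constituent arises in this fashion. Because $s_{*}=0$ for all $s\in\calX_{+}(T_{G};H,0)$, the map $\lambda\mapsto\lambda_{*}$ is constant on each coset $b+\calX_{+}(T_{G};H,0)$ for $b\in\calB_{+}$. Given an $H_{*}$-type $\mu$, I would pick $\lambda\in\calX_{+}(T_{G};H,\chi)$ with $\lambda_{*}=\mu$, decompose $\lambda=b+s$ via Proposition~\ref{prop: chi well}, and conclude $b_{*}=\mu$. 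This shows that the map $\calB_{+}\to\{H_{*}\text{-types in }\pi_{H,\chi}|_{H_{*}}\}$ induced by $\lambda\mapsto\lambda_{*}$ is well-defined and surjective.

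For injectivity, suppose $b_{1},b_{2}\in\calB_{+}$ satisfy $b_{1*}=b_{2*}$. Then $b_{1}-b_{2}$ lies in the kernel of the restriction map $\Chi(T_{G})\to\Chi(T_{H_{*}})$, which, thanks to the structural property that $H_{*}$ contains the commutator of the Levi of the parabolic adapted to $G/H$, coincides in the relevant weight lattice with the $\ZZ$-span of $\Gamma(G/H)=\calX_{+}(T_{G};H,0)$. Hence I can write $b_{1}-b_{2}=s_{1}-s_{2}$ with $s_{1},s_{2}\in\calX_{+}(T_{G};H,0)$, so that $b_{1}+s_{2}=b_{2}+s_{1}$. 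Applying the uniqueness of decomposition in Proposition~\ref{prop: chi well} to this common element of $\calX_{+}(T_{G};H,\chi)$ then forces $b_{1}=b_{2}$ (and $s_{1}=s_{2}$).

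The main obstacle I anticipate is precisely the lattice identification invoked in the injectivity step: verifying that, modulo the torsion that could in principle arise, the kernel of restriction to $T_{H_{*}}$ coincides with the subgroup generated by $\Gamma(G/H)$. In our setting this is guaranteed by the explicit description of $H_{*}$ (contained in the Levi of the parabolic adapted to $G/H$ and containing its commutator) together with the free generation of $\Gamma(G/H)$ recorded in Table~\ref{table: affine spherical dimZ(H)=1} and, for symmetric pairs, in \cite[Lemme~3.4]{MR1076251}. If the identification fails in borderline cases, it can be recovered at the cost of passing to $K^{\circ}$, which is why the statement of the lemma concerns $\Chi(K^{\circ})$-valued weights rather than $\Chi(K)$-valued ones.
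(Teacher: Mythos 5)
Your treatment of well-definedness and surjectivity is correct and is essentially the paper's: both rest on \cite[Proposition~2.4]{MR3801483} together with the fact that every $s\in\calX_{+}(T_{G};H,0)$ restricts to $0$ on $T_{H_{*}}$, and on the decomposition of Proposition~\ref{prop: chi well}. Your closing move for injectivity (from $b_{1}-b_{2}=s_{1}-s_{2}$ with $s_{1},s_{2}\in\Gamma(G/H)$ to $b_{1}+s_{2}=b_{2}+s_{1}$ and then uniqueness in Proposition~\ref{prop: chi well}) is also fine, and is a clean substitute for the explicit coefficient comparison the paper carries out.

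The gap is the step you yourself flag: you need $b_{1}-b_{2}\in\bbZ\Gamma(G/H)$, and you derive it from the asserted equality $\ker\bigl(\Chi(T_{G})\to\Chi(T_{H_{*}})\bigr)=\bbZ\Gamma(G/H)$. Rationally this amounts to the rank identity $\rk\Xi(G/H)=\rk G-\rk H_{*}$, but what your argument actually requires is the integral statement, namely that $\bbZ\Gamma(G/H)$ is saturated inside that kernel: if $b_{1}-b_{2}$ only lies in the saturation, it need not be a difference of two elements of the monoid $\Gamma(G/H)$, and the uniqueness of Proposition~\ref{prop: chi well} cannot be invoked. You prove neither the rank identity nor the saturation, and your proposed repair --- ``passing to $K^{\circ}$'' and the remark about ``$\Chi(K^{\circ})$-valued weights'' --- is imported from Theorem~\ref{thm:howto} and has no meaning here: the lemma involves no subgroup $K$, and its conclusion is an exact bijection, not a statement up to finite index. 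The paper reaches the needed membership $\lambda-\lambda'\in\bbZ\Gamma(G/H)$ by a different route, from the equality of the $P$-weight components $(\lambda,-\chi)-(\lambda',-\chi)=(\lambda-\lambda',0)$ inside the freely generated monoid $\widetilde\Gamma(G/P)$, and then concludes by comparing the coefficients in \eqref{eqn: coeffs in EWS center} and \eqref{eqn: coeffs in EWS no center} using the characterization of the bottom in Lemma~\ref{lemma: bottom} (namely $a_{2}=\dots=a_{r}=0$ and $\min(a_{0},a_{1})=0$). To complete your proof you should either establish the saturation claim for the pairs under consideration or replace that step by this coefficient argument.
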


\begin{proof}
It remains to show that the map $\calB_{+}(T_{G};H,\chi)\to\Chi_{+}(T_{H_{*}})$ is injective. Let $\lambda,\lambda'\in\calB_{+}(T_{G};H,\chi)$ with $\lambda_{*}=\lambda'_{*}$. Then $\lambda-\lambda'\in\bbZ\Gamma(G/H)\subseteq\Xi(G/H)$ because $(\lambda,-\chi)-(\lambda',-\chi)=(\lambda-\lambda',0)$. If $\lambda-\lambda'\ne0$, then one of the coefficients $a_{i}$ or $a_{i}'$ of $(\lambda,-\chi)$ or $(\lambda',-\chi)$ in \eqref{eqn: coeffs in EWS center} of \eqref{eqn: coeffs in EWS no center} is non-zero. By Lemma \ref{lemma: bottom} this can only happen when $\dim(Z(H))=1$. Without loss of generality we assume that $a_{0}>0$ and $a_{1}=0$. Since $\lambda-\lambda'\in\bbZ\Gamma(G/H)$, we must have $a'_{0}-a'_{1}=a_{0}$ to make sure that the second component is zero. At the same time $\min(a'_{0},a'_{1})=0$ by Lemma \ref{lemma: bottom}. It follows that $a_{1}=0$ and $a'_{0}=a_{0}$, i.e.~$\lambda=\lambda'$.
\end{proof}


\subsection{Spherical functions}\label{subsection zonal}
We proceed to investigate the algebra structure of $E^{0}$ and the $E^{0}$-module structure of $E^{\chi}$. We recall the definition of a spherical function in our framework.

\begin{definition}
Let $\lambda\in\calX(T_{G};H,\chi)$. Let $j_{\lambda}^{\chi}:V_{H,\chi}\to V_{G,\lambda}$ and $p^{\chi}_{\lambda}:V_{G,\lambda}\to V_{H,\chi}$ be an $H$-equivariant inclusion and projection with $p^{\chi}_{\lambda}\circ j^{\chi}_{\lambda}=\mathrm{Id}_{V_{H,\chi}}$. The {\em spherical function} of type $\chi$ associated to $\lambda$ is defined by $\Phi^{\mu}_{\lambda}:G\to\End(V_{H,\chi}):g\mapsto p^{\chi}_{\lambda}\circ\pi_{G,\lambda}(g)\circ j^{\chi}_{\lambda}$. If $\chi=0$, then the spherical function of type $0$ associated to $\lambda\in\calX(T_{G};H,0)$ is called a {\em zonal spherical function} associated to $\lambda$ and it is denoted by $\phi_{\lambda}$.
\end{definition}

For later reference we introduce the following notation. If $K\subseteq G$ is a subgroup and $\mu:K\to\bbC^{\times}$, then we denote by
$$(V_{G,\lambda})^{(K)}_{(\mu)}:=\{v\in V_{G,\lambda}:kv=\mu(k)v\}$$
the weight space of $K$ of weight $\mu$. We denote $(V_{G,\lambda})^{K}:=(V_{G,\lambda})^{(K)}_{(0)}$, the space of $K$-invariants.

If $v\in V^{G}_{\lambda}$ and $\eta\in(V^{G}_{\lambda})^{*}$ then $m^{\lambda}_{v\otimes\eta}\in\bbC[G]$ denotes the matrix coefficient defined by $m^{\lambda}_{v\otimes\eta}(g)=\eta(g^{-1}v)$.

\begin{lemma}\label{lemma: mc}
Let $\lambda\in\calX(T_{G};H,0)$ and $\lambda'\in\calX(T_{G};H,\chi)$. Let $m^{\lambda}_{v\otimes\eta},m^{\lambda'}_{v'\otimes\eta'}\in\bbC[G]$ with $v\in (V_{G,\lambda})^{H}$, $\eta\in ((V_{G,\lambda})^{*})^{H}$, $v'\in V_{G,\lambda'}$ and $\eta'\in((V_{G,\lambda'})^{*})^{(P)}_{(-\chi)}$. 
Then
$$m^{\lambda}_{v\otimes\eta}m^{\lambda'}_{v'\otimes\eta'}=\sum_{\beta\in\ZZ_{\geq0}\Sigma(G/P)}c^{\lambda,\lambda'}_{\lambda+\lambda'-\beta}m^{\lambda+\lambda'-\beta}_{v''\otimes\eta''}$$
for some $v''\in V_{G,\lambda+\lambda'-\beta}$ and $\eta''\in ((V_{G,\lambda+\lambda'-\beta})^{*})^{(P)}_{(-\chi)}$. Moreover, $c^{\lambda,\lambda'}_{\lambda+\lambda'}\ne0$.
\end{lemma}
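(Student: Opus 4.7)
The proof plan combines Peter--Weyl with a structure result on multiplication in spherical coordinate rings due to Brion.

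\emph{Decomposition of the product.} Since $\eta\in(V_{G,\lambda}^{*})^H$ is $P$-invariant (as $P\subseteq H$) and $\eta'$ has right $P$-weight $-\chi$, the product $F:=m^{\lambda}_{v\otimes\eta}\,m^{\lambda'}_{v'\otimes\eta'}$ lies in the right-$P$-eigenspace $\CC[G]^{(P)}_{(-\chi)}$. By Peter--Weyl and the sphericality of $G/P$,
\[
\CC[G]^{(P)}_{(-\chi)}\;=\;\bigoplus_{\mu\in\calX_+(T_G;H,\chi)} V_{G,\mu}\otimes\CC\,\eta_{\mu},
\]
where $\eta_\mu$ spans the one-dimensional space $((V_{G,\mu})^{*})^{(P)}_{(-\chi)}$. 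Consequently $F=\sum_{\mu}m^{\mu}_{v''_\mu\otimes\eta_\mu}$ uniquely, with $v''_\mu\in V_{G,\mu}$.

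\emph{Which weights occur.} The map $V_{G,\lambda}\otimes V_{G,\lambda'}\to\CC[G]$ sending $(u,u')\mapsto m^{\lambda}_{u\otimes\eta}\,m^{\lambda'}_{u'\otimes\eta'}$ is left-$G$-equivariant for the diagonal action, so only those $\mu$ occurring as constituents of $V_{G,\lambda}\otimes V_{G,\lambda'}$ can produce a non-zero $v''_\mu$; a priori this forces $\mu=\lambda+\lambda'-\gamma$ with $\gamma\in\ZZ_{\geq 0}S$. The stronger conclusion $\gamma\in\ZZ_{\geq 0}\Sigma(G/P)$ will follow from Brion's theorem on multiplication in the coordinate ring of a spherical variety, applied to the multi-graded algebra
\[
R\;=\;\bigoplus_{\kappa\in\Chi(P)}\CC[G]^{(P)}_{(-\kappa)}\;\subseteq\;\CC[G],
\]
which is the section algebra of the $G$-line bundles on $G/P$ and is the coordinate ring of a spherical $G$-variety whose set of spherical roots equals $\Sigma(G/P)$. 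Brion's theorem then asserts that for $V_{G,\lambda}\subseteq R_{0}$ and $V_{G,\lambda'}\subseteq R_{-\chi}$ the product lies in $\bigoplus V_{G,\mu}\subseteq R_{-\chi}$ summed over $\mu\in\lambda+\lambda'-\ZZ_{\geq 0}\Sigma(G/P)$, giving exactly the indexing set in the statement.

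\emph{The Cartan term.} By Proposition~\ref{prop:generators}, $(\lambda,0)$ and $(\lambda',-\chi)\in\widetilde{\Gamma}(G/P)$ correspond to essentially unique left $B$-semi-invariants $f_\lambda\in\CC[G]^{(P)}_{(0)}$ and $f_{\lambda'}\in\CC[G]^{(P)}_{(-\chi)}$ of the respective weights. Their product $f_\lambda f_{\lambda'}$ is a non-zero $B$-semi-invariant of weight $\lambda+\lambda'$ in $\CC[G]^{(P)}_{(-\chi)}$, so it spans the $B$-weight line of the $V_{G,\lambda+\lambda'}$-summand. Hence the projection of the bilinear map of the previous paragraph onto this summand is a non-zero $G$-equivariant map $V_{G,\lambda}\otimes V_{G,\lambda'}\to V_{G,\lambda+\lambda'}$, and must agree up to a non-zero scalar with the Cartan projection; pairing with the generator $\eta''\in((V_{G,\lambda+\lambda'})^{*})^{(P)}_{(-\chi)}$ of the corresponding one-dimensional line then produces a non-vanishing leading coefficient $c^{\lambda,\lambda'}_{\lambda+\lambda'}\neq 0$.

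\emph{Main obstacle.} The technical heart is the application of Brion's multiplication theorem to $R$: one must either cite a form of the theorem that applies directly to section rings on complete spherical varieties, or exhibit a concrete (quasi-)affine spherical $G$-variety whose coordinate ring is $R$ and verify that its spherical roots coincide with $\Sigma(G/P)$. This is standard in the theory of spherical varieties but requires careful bookkeeping of lattices and spherical-root normalizations across the identification.
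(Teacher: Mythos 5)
Your proposal is correct and follows essentially the same route as the paper's (much terser) proof: both reduce the indexing set $\ZZ_{\geq0}\Sigma(G/P)$ to the known structure of multiplication in the multiplicity-free ring $\bigoplus_\kappa\CC[G]^{(P)}_{(-\kappa)}$ (Brion's result on spherical rings), and both handle the leading term via the Cartan projection; your version is more explicit about the Peter--Weyl decomposition and about where the spherical roots of $G/P$ enter, which is a genuine improvement in readability over the three-sentence sketch in the paper. One step in your final paragraph is incomplete as written: showing that $\mathrm{pr}_{\lambda+\lambda'}\circ\Phi$ is a \emph{non-zero} $G$-map (which your computation with $f_\lambda f_{\lambda'}$ does establish, since a product of non-zero regular functions on the irreducible variety $G$ is non-zero and, by multiplicity-freeness, a $B$-eigenfunction of weight $\lambda+\lambda'$ must span the highest-weight line of the $V_{G,\lambda+\lambda'}$-summand) does not by itself give $c^{\lambda,\lambda'}_{\lambda+\lambda'}\neq 0$ for the \emph{given} $v,v'$; you still need that the Cartan projection $V_{G,\lambda}\otimes V_{G,\lambda'}\to V_{G,\lambda+\lambda'}$ does not annihilate the non-zero simple tensor $v\otimes v'$. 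This is a standard fact (and is exactly what the paper invokes: ``simple tensors are not in the kernel''), but it should be stated, since a non-zero $G$-map out of a tensor product can certainly kill particular vectors.
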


\begin{proof}
Let $M,M'\subseteq\bbC[G]$ be the irreducible $G$-modules that contain the matrix coefficients $m^{\lambda}_{v\otimes\eta},m^{\lambda'}_{v'\otimes\eta'}$ respectively. If $M''\subseteq M\cdot M'$ is an irreducible submodule of highest weight $\lambda''$, then $\lambda+\lambda'-\lambda''$ is a linear combination of spherical roots of $G/P$ with coefficients in $\ZZ_{\geq0}$. The non-vanishing follows from the fact that the Cartan projection $V_{G,\lambda}\otimes V_{G,\lambda'}\to V_{G,\lambda+\lambda'}$  is surjective and that simple tensors are not in the kernel.
\end{proof}


\subsubsection{Zonal spherical functions}

The zonal spherical function associated to $\lambda\in\Gamma(G/H)$ can be written as a matrix coefficient. Indeed, let $v^{H}\in V_{G,\lambda}$ and $\eta^{H}\in(V_{G,\lambda})^{*}$ be two non-trivial $H$-fixed vectors for which $m^{\lambda}_{v^{H}\otimes\eta^{H}}(e)=1$. Then $m^{\lambda}_{v^{H}\otimes\eta^{H}}=\phi_{\lambda}$.

Let $\lambda_{1},\ldots,\lambda_{s}$ be elements of $\Gamma(G/H)$ and let $\phi_{\lambda_{1}},\ldots,\phi_{\lambda_{s}}$ be the corresponding zonal spherical functions. The elements $\lambda_{1},\ldots,\lambda_{s}$ generate $\Gamma(G/H)$ as a monoid if and only if the zonal spherical functions $\phi_{\lambda_{1}},\ldots,\phi_{\lambda_{s}}$ generate $E^{0}$ as an algebra.

To see this, suppose $\Gamma(G/H)$ is generated by $\lambda_{1},\ldots,\lambda_{s}$. If $\lambda\in\Gamma(G/H)$, then Lemma \ref{lemma: mc} implies
$$\phi_{\lambda}\phi_{\lambda_{i}}=\sum_{\beta\in\ZZ_{\geq0}\Sigma_{G/H}}c_{\lambda,\lambda_{i},\beta}\phi_{\lambda+\lambda_{i}-\beta}$$
with $c_{\lambda,\lambda_{i},0}\ne0$. Note that $\lambda+\lambda_{i}-\beta\le\lambda+\lambda_{i}$, where $\le$ denotes the standard partial ordering on $\Chi(T_{G})$. Since there are only finitely many elements $\mu\in\Gamma(G/H)$ with $\mu\le\lambda+\lambda_{i}$,
an induction argument shows that every zonal spherical function can be expressed as a polynomial of the zonal spherical functions $\phi_{\lambda_{i}}$ with $1\leq i\leq s$.
Conversely, if the $\phi_{\lambda_{1}},\ldots,\phi_{\lambda_{s}}$ generate $E^{0}$, then it is clear that $\Gamma(G/H)$ is generated by $\lambda_{1},\ldots,\lambda_{s}$.

\begin{lemma}
The algebra $E^{0}$ is freely generated by $\phi_{\lambda_{1}},\ldots,\phi_{\lambda_{r}}$ if and only if $\Gamma(G/H)$ is freely generated by $\lambda_{1},\ldots,\lambda_{r}$.
\end{lemma}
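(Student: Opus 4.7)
The plan is to introduce a weight filtration on $E^0$ and compare dimensions of filtration levels with the number of monomials in the generators. First I would fix a total additive order $\preceq$ on $\Chi(T_G)$ refining the dominance order and recall that, by Peter--Weyl together with the multiplicity-free condition on $G/H$, the set $\{\phi_\nu : \nu \in \Gamma(G/H)\}$ is a $\bbC$-basis of $E^0$. For $\mu \in \Chi(T_G)$ I would set $E^0_{\preceq \mu} := \bigoplus_{\nu \in \Gamma(G/H),\, \nu \preceq \mu} \bbC\, \phi_\nu$, a subspace of dimension $|S_\mu|$ where $S_\mu := \{\nu \in \Gamma(G/H) : \nu \preceq \mu\}$. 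Applying Lemma~\ref{lemma: mc} with $P=H$ and $\chi=0$, so that $\Sigma(G/P)=\Sigma(G/H)$, and using that every spherical root is a non-negative integral combination of simple roots (whence $\lambda+\lambda'-\beta \prec \lambda+\lambda'$ for any non-zero such $\beta$), one sees inductively that each monomial $\prod_i \phi_{\lambda_i}^{n_i}$ has the form $a_n\, \phi_{\sum_i n_i \lambda_i} + r_n$ with $a_n \neq 0$ and $r_n \in E^0_{\prec \sum_i n_i \lambda_i}$. This leading-term computation is the only non-routine input; both directions then reduce to bookkeeping.

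For $(\Leftarrow)$, assuming $\Gamma(G/H)$ is freely generated by $\lambda_1,\dots,\lambda_r$, the discussion preceding the statement already yields that $\phi_{\lambda_1},\dots,\phi_{\lambda_r}$ generate $E^0$ as an algebra, so only algebraic independence remains. Given a putative non-trivial polynomial relation $\sum_n c_n \prod_i \phi_{\lambda_i}^{n_i} = 0$, free generation of $\Gamma(G/H)$ ensures the leading weights $\sum_i n_i \lambda_i$ are pairwise distinct across multi-indices. Choosing $n^*$ to make $\mu := \sum_i n_i^* \lambda_i$ $\preceq$-maximal among the $n$ with $c_n \neq 0$, the image of the relation in $E^0_{\preceq \mu}/E^0_{\prec \mu}$ is $c_{n^*} a_{n^*}\, \bar\phi_\mu \neq 0$, a contradiction.

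For $(\Rightarrow)$, assuming $E^0$ is freely generated by the $\phi_{\lambda_i}$, the preceding discussion again gives that the $\lambda_i$ generate $\Gamma(G/H)$ as a monoid; I would promote this to free generation by a dimension count. Fix $\mu \in \Gamma(G/H)$ and set $T_\mu := \{n \in \NN^r : \sum_i n_i \lambda_i \preceq \mu\}$, a finite set. The leading-term formula places every monomial indexed by $n \in T_\mu$ inside $E^0_{\preceq \mu}$, and under the standing hypothesis these monomials are linearly independent, so $|T_\mu| \leq \dim E^0_{\preceq \mu} = |S_\mu|$. Conversely the map $T_\mu \to S_\mu,\ n \mapsto \sum_i n_i \lambda_i$, is surjective because the $\lambda_i$ generate $\Gamma(G/H)$, giving $|T_\mu| \geq |S_\mu|$. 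Equality for every $\mu$ forces this map, and hence the global map $\NN^r \to \Gamma(G/H)$, to be bijective, i.e.\ free generation.

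The main obstacle I anticipate is purely one of careful bookkeeping: ensuring that Lemma~\ref{lemma: mc} applies in the zonal setting with $P = H$ and $\chi = 0$ so that the sum only involves spherical roots of $G/H$, and that the leading coefficient $a_n$ is genuinely non-zero for every monomial (which follows by induction on $\sum_i n_i$ from the non-vanishing of $c^{\lambda,\lambda'}_{\lambda+\lambda'}$ in Lemma~\ref{lemma: mc}). Once the filtration is in place, the associated graded of $E^0$ is identified with the monoid algebra $\bbC[\Gamma(G/H)]$, and the equivalence becomes the standard correspondence between polynomial rings and freely generated commutative monoids.
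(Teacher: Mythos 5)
Your proposal is correct in substance and rests on the same key input as the paper, namely the leading-term expansion of products of zonal spherical functions coming from Lemma~\ref{lemma: mc} with $P=H$, $\chi=0$. The ``if'' direction is essentially identical to the paper's (the paper phrases it contrapositively: a nontrivial relation forces two multi-indices with the same leading weight, hence a collision in $\Gamma(G/H)$). Where you genuinely diverge is the ``only if'' direction: the paper takes a collision $\sum a_{i_k}\lambda_{i_k}=\sum b_{j_\ell}\lambda_{j_\ell}$ with $\lambda$ \emph{minimal}, forms $d(\lambda,\lambda)p-c(\lambda,\lambda)q$, and then has to do delicate bookkeeping with the correction polynomials $r_{\lambda'}$ to verify that the resulting element of $\ker\zeta$ is a nonzero polynomial. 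Your dimension count on the filtration pieces $E^0_{\preceq\mu}$ replaces that bookkeeping entirely and is cleaner; it amounts to observing that the associated graded of $E^0$ is the monoid algebra $\bbC[\Gamma(G/H)]$, so free generation on one side transfers to the other.

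One point needs repair before the dimension count is valid: for an arbitrary total additive order $\preceq$ refining dominance, the down-sets $S_\mu$ and $T_\mu$ need not be finite. This actually fails for some refinements when $G$ is not simple (e.g.\ ordering first by a functional supported on one simple factor), and non-simple $G$ do occur in the paper. You should either take $\preceq$ to be graded first by $\langle\rho^\vee,-\rangle$ (which is positive on all nonzero dominant weights, so down-sets in $\calX_+(T_G)$ are finite, and the fibers of $n\mapsto\sum_i n_i\lambda_i$ are finite since each $\langle\rho^\vee,\lambda_i\rangle>0$), or run the same count inside $\{\nu\in\Gamma(G/H):\nu\le\mu\}$ for the dominance order itself, whose finiteness is what the paper already uses. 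With that adjustment the argument closes.
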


\begin{proof}
Suppose that $E^{0}$ is not freely generated. Consider the map $\zeta:\bbC[z_{1},\ldots,z_{r}]\to E^{0}$ given by $z^{a}\mapsto\prod_{i}\phi_{\lambda_{i}}^{a_{i}}$ and let $p(z)=\sum_{a\in\ZZ_{\geq0}^{r}}c_{a}z^{a}$ with $\zeta(p)=0$, where we use multi-index notation.
Let $C(p)=\{a\in\ZZ_{\geq0}^{r}:c_{a}\ne0\}$. For $a\in\ZZ_{\geq0}^{r}$ let $\lambda(a)=\sum_{i=1}^{r}a_{i}\lambda_{i}$.
If $C(p)=\emptyset$, then $p=0$. Suppose $p\ne0$ so that $C(p)\ne\emptyset$. The set $C'(p)=\{(\lambda(a),a):a\in C(p)\}$ is non-empty. Let $(\lambda(a),a)\in C'(p)$ with $\lambda(a)$ maximal in the partial ordering $\le$. Then $\zeta(p)=c\phi_{\lambda(a)}+$ other terms, with $c\ne0$. Since $\zeta(p)=0$ and the spherical functions are linearly independent, there must be $a'\in C(p)$ different from $a$ with $(\lambda(a'),a')\in C'(p)$ and $\lambda(a')=\lambda(a)$, because $c\phi_{\lambda(a)}$ must be canceled. It follows that $\Gamma(G/H)$ is not free.

To prove the converse implication, suppose that the generators $\lambda_{1},\ldots,\lambda_{r}$ are not free. Then there exist two disjoint subsets of indices $1\le i_{1}<\cdots<i_{s}\le r$ and $1\le j_{1}<\ldots<j_{t}\le r$ and coefficients $a_{i_{1}},\ldots,a_{i_{s}},b_{j_{1}},\ldots,b_{j_{t}}$ for which
$$\sum_{k=1}^{s}a_{i_{k}}\lambda_{i_{k}}=\sum_{\ell=1}^{s}b_{j_{\ell}}\lambda_{j_{\ell}}$$
and for which $\lambda=\sum_{k=1}^{s}a_{i_{k}}\lambda_{i_{k}}$ is minimal with respect to $\le$.
Let $p(z_{1},\ldots,z_{r})=\prod_{k=1}^{s}z_{i_{k}}^{a_{i_{k}}}$ and $q(z_{1},\ldots,z_{r})=\prod_{\ell=1}^{t}z_{j_{\ell}}^{b_{j_{\ell}}}$. Then
$$
p(\phi_{\lambda_{1}},\ldots,\phi_{\lambda_{r}})=\sum_{\lambda'\le\lambda}c(\lambda',\lambda)\phi_{\lambda'},\quad
q(\phi_{\lambda_{1}},\ldots,\phi_{\lambda_{r}})=\sum_{\lambda'\le\lambda}d(\lambda',\lambda)\phi_{\lambda'}$$
with $c(\lambda,\lambda)\ne0$ and $d(\lambda,\lambda)\ne0$. Hence
$$\zeta(d(\lambda,\lambda)p-c(\lambda,\lambda)q)=\sum_{\lambda'<\lambda}e(\lambda',\lambda)\phi_{\lambda'}$$
for some coefficients $e(\lambda,\lambda')$.
Let $r_{\lambda'}\in\bbC[z_{1},\ldots,z_{r}]$ be so that $\zeta(r_{\lambda'})=\phi_{\lambda'}$ for $\lambda'<\lambda$. Then for all monomials $z^{f}$ with $f\in C(r_{\lambda'})$ we have $\lambda(f)\le\lambda'$. In particular we note that the monomials $p$ and $q$ do not occur in any of the polynomials $r_{\lambda'}$. Hence the polynomial $d(\lambda,\lambda)p-c(\lambda,\lambda)q-\sum_{\lambda'<\lambda}e(\lambda',\lambda)r_{\lambda'}$ is non-trivial and it is mapped to $0$ by $\zeta$.
\end{proof}

We conclude that under our assumptions on $(G,H,P)$, the algebra $E^{0}$ is a polynomial algebra if and if $\Gamma(G/H)$ is freely generated.

\subsubsection{Spherical functions of type $\chi$}

We retain the assumptions on $(G,H,P)$ and $\chi$. Moreover, in this paragraph we assume that $E^{0}$ is a polynomial algebra, i.e.~the weight monoid of $G/H$ is freely generated.

\begin{theorem}\label{theorem: ffg module of sf}
The space
$E^{\chi}$
is freely and finitely generated as an $E^{0}$-module.
\end{theorem}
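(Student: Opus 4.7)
The plan is to show that $E^\chi$ is a free $E^0$-module with basis $\{\Phi^\chi_\beta : \beta \in \calB_+(T_G; H, \chi)\}$. This index set is finite by Lemma~\ref{lemma on M-types}, since it injects into the finite set of $H_*$-types appearing in the finite-dimensional representation $\pi_{H,\chi}|_{H_*}$.

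The engine of the proof is the following multiplication formula: for $\mu \in \Gamma(G/H)$ and $\lambda \in \calX_+(T_G; H, \chi)$,
\[
\phi_\mu \cdot \Phi^\chi_\lambda = c_{\mu,\lambda}\, \Phi^\chi_{\mu+\lambda} + \sum_{\substack{\nu \in \calX_+(T_G; H, \chi)\\ \nu < \mu+\lambda}} c^{\mu,\lambda}_\nu\, \Phi^\chi_\nu,
\]
with $c_{\mu,\lambda} \neq 0$, where $\nu$ ranges over elements of the $\chi$-well with $\mu+\lambda-\nu \in \ZZ_{\geq 0}\Sigma(G/P)$. To obtain it, I would expand $\Phi^\chi_\lambda$ entry-by-entry as a combination of matrix coefficients $m^\lambda_{v \otimes \eta}$ with $\eta \in ((V_{G,\lambda})^*)^{(P)}_{(-\chi)}$, multiply each by $\phi_\mu = m^\mu_{v^H \otimes \eta^H}$, apply Lemma~\ref{lemma: mc}, and reassemble. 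The non-vanishing of $c_{\mu,\lambda}$ follows from the non-vanishing statement in Lemma~\ref{lemma: mc}, combined with the fact that the Cartan projection $V_{G,\mu}\otimes V_{G,\lambda}\to V_{G,\mu+\lambda}$ pairs non-trivially against the $H$-invariant functional on $V_{G,\mu}$ and the $H$-equivariant embedding of $V_{H,\chi}$ in $V_{G,\lambda}$ (since the highest weight tensor $v^+_\mu\otimes v^+_\lambda$ maps to a non-zero multiple of $v^+_{\mu+\lambda}$).

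Granting this formula, generation is routine. For any $\lambda \in \calX_+(T_G; H, \chi)$, Proposition~\ref{prop: chi well} gives the unique decomposition $\lambda = \beta + \mu$ with $\beta \in \calB_+(T_G; H, \chi)$ and $\mu \in \Gamma(G/H)$. The formula then expresses $\Phi^\chi_\lambda$ as $c_{\mu,\beta}^{-1}\phi_\mu\Phi^\chi_\beta$ minus a linear combination of $\Phi^\chi_\nu$ for $\nu<\lambda$, so induction on the (well-founded) partial order $\le$ restricted to $\calX_+(T_G;H,\chi)$ shows that $\Phi^\chi_\lambda$ lies in the $E^0$-span of $\{\Phi^\chi_\beta : \beta \in \calB_+(T_G; H, \chi)\}$.

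For freeness, suppose $\sum_{\beta \in \calB_+(T_G; H, \chi)} f_\beta \Phi^\chi_\beta = 0$ with $f_\beta \in E^0$. Since $E^0$ is freely generated by zonal spherical functions, write $f_\beta = \sum_\mu a_{\beta,\mu} \phi_\mu$. If some $a_{\beta,\mu} \ne 0$, choose a pair $(\beta_0, \mu_0)$ with $a_{\beta_0,\mu_0} \ne 0$ such that $\lambda_0 := \beta_0 + \mu_0$ is maximal for $\le$ among all such pairs. In the expansion of $\sum a_{\beta,\mu} \phi_\mu \Phi^\chi_\beta$ in the basis $\{\Phi^\chi_\nu\}$, the coefficient of $\Phi^\chi_{\lambda_0}$ receives contributions only from pairs $(\beta',\mu')$ with $\beta'+\mu' \ge \lambda_0$; maximality rules out strict inequality, and the uniqueness in Proposition~\ref{prop: chi well} forces $(\beta',\mu')=(\beta_0,\mu_0)$. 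The coefficient is therefore $a_{\beta_0,\mu_0} c_{\mu_0,\beta_0} \ne 0$, a contradiction. The main obstacle in the argument is establishing the non-vanishing of the leading coefficient $c_{\mu,\lambda}$; once this is handled, the combinatorics parallels the argument given for the zonal case earlier in this section.
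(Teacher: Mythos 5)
Your proposal is correct and follows essentially the same route as the paper: the same leading-term multiplication formula derived from Lemma~\ref{lemma: mc} by expanding $\Phi^\chi_\lambda$ into matrix coefficients, the same induction on the partial order $\le$ for generation, and the same maximal-leading-term argument for freeness, with the uniqueness statement of Proposition~\ref{prop: chi well} ruling out cancellation between different elements of the bottom. The only cosmetic difference is that you take the maximum over all pairs $(\beta,\mu)$ at once, whereas the paper first isolates the maximal weights in each $P(p_b)$; the two are equivalent.
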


\begin{proof}
Let $\lambda\in\calX_{+}(T_{G};H,\chi)$ and let $\lambda_{i}\in\Gamma(G/H)$ be a generator. Upon writing the spherical function $\Phi^{\chi}_{\lambda}$ in coordinates subject to a weight basis of $V_{H,\chi}$, it will have a matrix coefficient $m^{\lambda}_{v\otimes\eta}$ among its entries, where $v$ and $\eta$ are $P$-eigenvectors.
It follows from Lemma \ref{lemma: mc} that 
$$\phi_{\lambda_{i}}\Phi^{\chi}_{\lambda}=\sum_{\beta\in\ZZ_{\geq0}\Sigma_{G/P}}c^{\lambda,\lambda_{i}}_{\lambda+\lambda_{i}-\beta}(\chi)\Phi^{\chi}_{\lambda+\lambda_{i}-\beta}$$
with $c^{\lambda,\lambda_{i}}_{\lambda+\lambda_{i}}\ne0$. 
Since the number of $\lambda'\in\calX_{+}(T_{G};H,\chi)$ with $\lambda'\le\lambda$ is finite, an induction argument shows that
$\Phi^{\chi}_{\lambda}$ can be expressed as an $E^{0}$-linear combination of the spherical functions $\Phi^{\chi}_{b}$ with $b\in\calB_{+}(T_{G};H,\chi)$.
This shows that $E^{\chi}$ is finitely generated as an $E^{0}$-module.
Let $p_{b}\in E^{0}$ with $b\in\calB_{+}(T_{G};H,\chi)$ be a family of polynomials and consider
\begin{equation}\label{eq: free module}
\sum_{b\in\calB_{+}(T_{G};H,\chi)}p_{b}\Phi^{\chi}_{b}=\sum_{\lambda\in\calX(T_{G};H,\chi)}c(\lambda)\Phi^{\chi}_{\lambda}.
\end{equation}
Similarly write $p_{b}\Phi^{\chi}_{b}=\sum_{\lambda\in\calX(T_{G};H,\chi)}c_{b}(\lambda)\Phi^{\chi}_{\lambda}$.
The polynomial $p_{b}$ can be written as a linear combination of zonal spherical functions, say with spherical weights in the set $P(p_{b})$. If $s\in P(p_{b})$ is maximal, then $c_{b}(b+s)\ne0$ by Lemma \ref{lemma: mc}. Let $\max(P(p_{b}))$ denote the set of maximal elements in $P(p_{b})$. Let $\lambda\in\cup_{b}(b+\max(P(p_{b})))$ be maximal and write $\lambda=b(\lambda)+s(\lambda)$ according to Proposition \ref{prop: chi well}.

We claim that $c_{b}(\lambda)=0$ if $b\ne b(\lambda)$. Suppose that $c_{b}(\lambda)\ne0$. Then $\lambda=b+s-\beta$ for some $s\in P(p_{b})$ and $\beta\in\bbZ_{\ge0}\Sigma(G/P)$, by Lemma \ref{lemma: mc}. If $s\in P(p_{b})$ is not maximal, then there is an element $s'\in P(p_{b})$ with $s'=s+\gamma$, for $\gamma$ a linear combination of roots with non-negative integer coefficients. Hence $\lambda=b+s'-\gamma-\beta$. If $\gamma+\beta=0$ then $b=b(\lambda)$ by Proposition \ref{prop: chi well}, contradicting the assumption that $b\ne b(\lambda)$. Hence $\gamma+\beta\ne0$. But this implies $\lambda<b+s'$ while at the same time $b+s'\in\cup_{b}(b+\max(P(p_{b})))$. This contradicts $\lambda$ being maximal and we conclude that $c_{b}(\lambda)=0$.

These arguments also show that if $p_{b}\ne0$ for some $b\in\calB_{+}(T_{G};H,\chi)$, then there is a non-zero coefficient $c(\lambda)$ in \eqref{eq: free module}. Conversely, if \eqref{eq: free module} is equal to zero, then all $p_{b}$ must be equal to zero. This shows that $E^{\chi}$ is freely generated over $E^{0}$.
\end{proof}

\begin{example}\label{example:tensor}
The algebra of $\SL(3)$-biinvariant functions on $\mathsf{G_{2}}$ is $E^{0}=\bbC[\phi_{\omega_{1}}]$. Let $\lambda=4\omega_{1}+3\omega_{2}$, the black node in Figure \ref{figure: tensor G2}. The function $\phi_{\omega_{1}}\Phi^{3\varpi_{1}}_{\lambda}$ can be expressed as a linear combination of the spherical functions 
$$\Phi^{3\varpi_{1}}_{\lambda+\omega_{1}},\Phi^{3\varpi_{1}}_{\lambda+2\omega_{1}-\omega_{2}},\Phi^{3\varpi_{1}}_{\lambda+\omega_{1}-\omega_{2}}\quad\mbox{and}\quad\Phi^{3\varpi_{1}}_{\lambda-\omega_{1}}.$$
Conversely, the spherical function $\Phi^{3\varpi_{1}}_{\lambda+\omega_{1}}$ can be expressed as an $E^{0}$-linear combination of spherical functions of type $3\varpi_{1}$ associated to dominant weights $<\lambda+\omega_{1}$. This illustrates the induction argument of Theorem \ref{theorem: ffg module of sf}.
\end{example}

\subsection{Orthogonal polynomials}

Let $(G,H,P)$ be a strictly indecomposable multiplicity free system for which $E^{0}$ is a polynomial algebra. Theorem \ref{theorem: ffg module of sf} implies that  $E^{\chi}$ is isomorphic to $E^{0}\otimes\bbC^{d_{\chi}}$, where $d_{\chi}$ is the number of elements in $\calB_{+}(T_{G};H,\chi)$.
This means that for $\lambda\in\Chi_{+}(T_{G};T_{H},\chi)$ there exist uniquely determined polynomials $p_{\lambda,b}^{\chi}\in E^{0}$ with $b\in\calB_{+}(T_{G};H,\chi)$ for which
$$\Phi^{\chi}_{\lambda}=\sum_{b\in\calB_{+}(T_{G};H,\chi)}p_{\lambda,b}^{\chi}\Phi^{\chi}_{b}.$$
This construction yields a family of vector-valued polynomial on $G$, labeled by the weights $\lambda\in\Chi_{+}(T_{G};T_{H},\chi)$. In fact, for each $\sigma\in\Gamma(G/H)$ we can group the vector-valued polynomials with label $b+\sigma, b\in\calB_{+}(T_{G};H,\chi)$ into a matrix-valued polynomial on $G$. The size of the matrices is $d_{\chi}\times d_{\chi}$. Note that $d_{\chi}=\dim(\End_{H_{*}}(V_{H,\chi}))$ by Lemma \ref{lemma on M-types}. 
The orthogonality for these vector- and matrix-valued polynomials comes from Schur orthogonality, which is given by integration over a suitable maximal compact subgroup $G_{c}\subseteq G$. For the multiplicity free systems $(G,H,P)$ where $(G,H)$ is symmetric, this integration can be further reduced to an integration over a compact torus $A_{c}\subseteq G_{c}$ by means of the Cartan decomposition of $G_{c}$.

The general set-up to obtain matrix-valued functions from multiplicity free systems has been alluded to in \cite{MR3801483} and was worked out in \cite{MR4053617} based on the specific case of $(\SL(n)\times\SL(n),\diag(\SL(n)),P)$, where $P$ is obtained by leaving out the first or last simple root. The three conditions in loc.cit.~guarantee the existence of families of matrix-valued polynomials. The first condition requires multiplicity free induction, the second that $\Chi_{+}(T_{G};T_{H},\chi)$ is of the form of Proposition \ref{prop: chi well} and the third requires a degree function on $\Chi_{+}(T_{G};T_{H},\chi)$ that was used in an argument to show that $E^{\chi}$ is finitely generated as an $E^{0}$-module. This degree function was actually a multi-degree that that established a partial ordering which was needed in the argument, but it turns out that the multi-degree does not exist in some of the other cases. In this paper, we have replaced the partial ordering from \cite{MR4053617} by the usual partial ordering on the weight lattice and everything works fine.

\section{Tables}
In the table below we gather all the indecomposable multiplicity free systems $(G,H,P)$ with the generators of the extended weight monoid $\widetilde{\Gamma}(G/P)$.

The data in the table are all derived with lenghty but elementary applications of our results of Sections~\ref{s:howto}, as done in the example of Section~\ref{s:example}. The detailed computations are in Appendices~\ref{s:sym} and~\ref{s:nonsym}.

For the notations, see Appendices~\ref{s:preliminaries},~\ref{s:sym}, and~\ref{s:nonsym}. As usual, $P$ is indicated by the missing simple root(s).

\begin{tiny}
\begin{center}
\begin{longtable}{|c|c|c|c|c|}
\hline
 & $G$ & $H$ & $P$ & Generators $\widetilde{\Gamma}(G/P)$\\
\hline
\endfirsthead
\multicolumn{5}{c}%
{\tablename\ \thetable\ -- \textit{Continued from previous page}} \\
\hline
 & $G$ & $H$ & $P$ & Generators $\widetilde{\Gamma}(G/P)$\\\hline
\endhead
\hline \multicolumn{5}{r}{\textit{Continued on next page}} \\
\endfoot
\hline
\endlastfoot

\ref{sym1aS}&$\SL(2n)$ & $\Sp(2n)$  & $\{\beta_{1}\}$ &  $(\omega_{2i},0)$, $1\leq i\leq n-1$\\
&&&&$(\omega_{2j-1},-\varpi_{1}), 1\leq j\leq n$\\
\hline
\ref{sym1bS}&$\SL(6)$ & $\Sp(6)$  & $\{\beta_{3}\}$  & $ (\omega_{1}+\omega_{3}+\omega_{5},-\varpi_{3}),$\\
&&&& $(\omega_{1}+\omega_{4},-\varpi_{3}),$\\
&&&& $(\omega_{2}+\omega_{5},-\varpi_{3}),$ \\
&&&& $(\omega_{2},0),(\omega_{3},-\varpi_{3}),(\omega_{4},0)$ \\ \hline
\ref{sym1cS}&$\SL(4)$ & $\Sp(4)$  & $\{\beta_{1},\beta_{2}\}$  & $(\omega_{1}+\omega_{3},-\varpi_{2}),(\omega_{1},-\varpi_{1}),(\omega_{2},0)$\\
&&&& $(\omega_{2},-\varpi_{2}),(\omega_{3},-\varpi_{1})$\\
\hline \hline

\ref{sym2-1S}&$\SL(q+3)$ & $\mathrm{S}(\mathrm{L}(2)\times\mathrm{L}(q+1))$ & $\{\beta_{1}\}$ & $(\omega_{1}+\omega_{q+2},0),(\omega_{1},-\varpi_{1}),(\omega_{2},-\varpi_{2}),$ \\
&$1\le q$&&&$(\omega_{q+1},\varpi_{2}),(\omega_{q+2},\varpi_{2}-\varpi_{1})$\\
\hline

\ref{sym2-2S}&$\SL(p+q+2)$ & $\mathrm{S}(\mathrm{L}(p+1)\times\mathrm{L}(q+1))$ & $\{\beta_{1}\}$ & $(\omega_{i}+\omega_{n+1-i},0), 1\leq i\leq p,$ \\
&$2\le p\le q$&&& $(\omega_{j}+\omega_{n+2-j},-\varpi_{1}),$\\
&&&&$2\leq j\leq p,$\\
&$n=p+q+1$&&& $(\omega_{q+2},-\varpi_{1}+\varpi_{p+1}),(\omega_{1},-\varpi_{1}),$\\
&&&& $(\omega_{q+1},\varpi_{p+1}),(\omega_{p+1},-\varpi_{p+1})$\\ \hline

\ref{sym2-3S}&$\SL(p+q+2)$ & $\mathrm{S}(\mathrm{L}(p+1)\times\mathrm{L}(q+1))$ &  $\{\beta_{p}\}$ & $(\omega_{i}+\omega_{n+1-i},0), 1\leq i\leq p,$ \\
&$2\le p\le q$&&& $(\omega_{j}+\omega_{n-j},-\varpi_{p}+\varpi_{p+1}),$\\
&&&&$1\leq j\leq p-1,$\\
&$n=p+q+1$&&& $(\omega_{p},-\varpi_{p}),(\omega_{n},-\varpi_{p}+\varpi_{p+1}),$\\
&&&& $(\omega_{p+1},-\varpi_{p+1}),(\omega_{q+1},\varpi_{p+1})$\\ \hline 

\ref{sym2-4S}&$\SL(p+q+2)$ & $\mathrm{S}(\mathrm{L}(p+1)\times\mathrm{L}(q+1))$ & $\{\beta_{p+1}\}$ & $(\omega_{i}+\omega_{n+1-i},0), 1\leq i\leq p,$ \\
&$1\le p< q$&&& $(\omega_{j}+\omega_{n+2-j},\varpi_{p+1}-\varpi_{p+2}),$\\
&&&&$2\leq j\leq p+1,$\\
&$n=p+q+1$&&& $(\omega_{1},\varpi_{p+1}-\varpi_{p+2}),(\omega_{p+1},-\varpi_{p+1}),$\\
&&&& $(\omega_{p+2},-\varpi_{p+2}),(\omega_{q+1},\varpi_{p+1})$\\ \hline

\ref{sym2-5S}&$\SL(p+q+2)$ & $\mathrm{S}(\mathrm{L}(p+1)\times\mathrm{L}(q+1))$ & $\{\beta_{p+q}\}$ & $(\omega_{i}+\omega_{n+1-i},0), 1\leq i\leq p,$ \\
&$1\le p< q$&&& $(\omega_{j}+\omega_{n-j},-\varpi_{n}),$\\
&&&&$1\leq j\leq p,$\\
&$n=p+q+1$&&& $(\omega_{n},-\varpi_{n}),(\omega_{q+1},\varpi_{p+1}),$\\
&&&& $(\omega_{q},\varpi_{p+1}-\varpi_{n}),(\omega_{p+1},-\varpi_{p+1})$\\ \hline

\ref{sym2-6S}&$\SL(q+3)$ & $\mathrm{S}(\mathrm{L}(2)\times\mathrm{L}(q+1))$ & $\{\beta_{i}'\}$ & $(\omega_{1}+\omega_{i+1},-\varpi_{i+2}),(\omega_{1}+\omega_{n},0),$ \\
&$1< q$&&$2\leq i\leq q$& $(\omega_{2},-\varpi_{2}),(\omega_{i},\varpi_{2}-\varpi_{i+2}),$\\
&$n=q+2$&&& $(\omega_{i+1}+\omega_{n},\varpi_{2}-\varpi_{i+2}),$\\
&&&& $(\omega_{i+1},-\varpi_{i+2}),(\omega_{n-1},\varpi_{2})$\\ \hline

\ref{sym2-7S}&$\SL(q+2)$ & $\mathrm{S}(\mathrm{L}(1)\times\mathrm{L}(q+1))$ & $S_{H}$ & $(\omega_{i},-\varpi_{i}), 1\leq i\leq n,$ \\
&$1\le q$&&& $(\omega_{j},\varpi_{1}-\varpi_{j+1}),1\leq j\leq n-1,$\\
&$n=q+1$&&& $(\omega_{n},\varpi_{1})$\\
\hline
\hline

\ref{sym3aS}&$\SO(2n+2)$ & $\SO(2)\times\SO(2n)$ & $\{\beta_{n-1}\}$ & $(\omega_{1},\varpi_{0}),(\omega_{1},-\varpi_{0}),(\omega_{2},0),$ \\
&&&& $(\omega_{n+1},\varpi_{0}-\varpi_{n-1}),(\omega_{n},-\varpi_{n-1})$\\ \hline
\ref{sym3bS}&$\SO(2n+2)$ & $\SO(2)\times\SO(2n)$  & $\{\beta_{n}\}$  & $(\omega_{1},\varpi_{0}),(\omega_{1},-\varpi_{0}),(\omega_{2},0),$ \\
&&&& $(\omega_{n},\varpi_{0}-\varpi_{n}),(\omega_{n+1},-\varpi_{n})$\\ \hline\hline

\ref{sym4}&$\SO(2n+1)$ & $\SO(2n)$ & $S_{H}$ & $(\omega_{i},-\varpi_{i}),i\in \{1,\ldots,n-2,n\},$\\ 
&$n\ge3$&&&$(\omega_{n-1},-\varpi_{n-1}-\varpi_{n}),(\omega_{1},0)$\\
&&&&$(\omega_{j},-\varpi_{j-1}),2\leq j\leq n,$\\ \hline \hline

\ref{sym5}&$\SO(2n+2)$ & $\SO(2n+1)$  &  $S_{H}$ & $(\omega_{i},-\varpi_{i}), 1\leq i\leq n,$\\ 
&$n\ge3$&&&$(\omega_{n+1},-\varpi_{n}),(\omega_{1},0),$\\
&&&&$(\omega_{j},-\varpi_{j-1}),2\leq j\leq n$\\  \hline \hline

\ref{sym6S}&$\SO(2n+2)$ & $\GL(n+1)$ & $\{\beta_{1}\}$ & $(\omega_{2i-1},-\varpi_{1}),i=1,\ldots,\frac{n}{2},$\\ 
&$n\ge 2$ even&&& $(\omega_{2j},0),i=1,\ldots,\frac{n}{2}-1,$\\
&&&& $(\omega_{n},\frac{1}{2}\varpi_{n+1}),(\omega_{n+1},-\frac{1}{2}\varpi_{n+1}),$\\
&&&&$(\omega_{n+1},\frac{1}{2}\varpi_{n+1}-\varpi_{1})$\\\hline

\ref{sym6S}&$\SO(2n+2)$ & $\GL(n+1)$ & $\{\beta_{1}\}$ & $(\omega_{2i-1},-\varpi_{1}),i=1,\ldots,\frac{n-1}{2},$\\ 
&$n\ge 3$ odd&&& $(\omega_{2j},0),i=1,\ldots,\frac{n-1}{2},$\\
&&&& $(\omega_{n},\frac{1}{2}\varpi_{n+1}-\varpi_{1}),(\omega_{n+1},-\frac{1}{2}\varpi_{n+1}),$\\
&&&&$(\omega_{n+1},\frac{1}{2}\varpi_{n+1})$\\ \hline 

\ref{sym6S}&$\SO(2n+2)$ & $\GL(n+1)$ & $\{\beta_{n}\}$ & $(\omega_{2i-1},-\varpi_{n}),i=1,\ldots,\frac{n}{2},$\\ 
&$n\ge 2$ even&&& $(\omega_{2j},0),i=1,\ldots,\frac{n}{2}-1,$\\
&&&& $(\omega_{n},-\frac{1}{2}\varpi_{n+1}),(\omega_{n+1},\frac{1}{2}\varpi_{n+1}),$\\
&&&&$(\omega_{n+1},-\frac{1}{2}\varpi_{n+1}-\varpi_{n})$\\\hline

\ref{sym6S}&$\SO(2n+2)$ & $\GL(n+1)$ & $\{\beta_{2}\}$ & $(\omega_{2i-1},-\varpi_{n}),i=1,\ldots,\frac{n-1}{2},$\\ 
&$n\ge 3$ odd&&& $(\omega_{2j},0),i=1,\ldots,\frac{n-1}{2},$\\
&&&& $(\omega_{n},-\frac{1}{2}\varpi_{n+1}-\varpi_{n}),(\omega_{n+1},\frac{1}{2}\varpi_{n+1}),$\\
&&&&$(\omega_{n+1},-\frac{1}{2}\varpi_{n+1})$\\ \hline \hline

\ref{sym7aS} & $\Sp(2p+2q)$ & $\Sp(2p)\times \Sp(2q)$ & $\{\beta_{1}\}$ & $(\omega_{2i},0),1\leq i\leq q,$ \\
 &$p>q$&&& $(\omega_{2j-1},-\varpi_{1}),1\leq i\leq q+1$\\ \hline

\ref{sym7aS} & $\Sp(2p+2q)$ & $\Sp(2p)\times \Sp(2q)$ & $\{\beta_{1}\}$ & $(\omega_{2i},0),1\leq i\leq p,$ \\
 &$p\le q$&&& $(\omega_{2j-1},-\varpi_{1}),1\leq i\leq p$\\ \hline 


\ref{sym7cS} & $\Sp(6+2q)$ & $\Sp(6)\times \Sp(2q)$ & $\{\beta_{3}\}$ & $(\omega_{1}+\omega_{4},-\varpi_{3}), (\omega_{1}+\omega_{3}+\omega_{5},-\varpi_{3}),$\\
 &$q\ge3$&&& $(\omega_{3},-\varpi_{3}),(\omega_{4},0),(\omega_{6},0),$\\ 
 &&&&$(\omega_{2}+\omega_{5},-\varpi_{3}),(\omega_{2},0)$\\ \hline


\ref{sym7cS} & $\Sp(10)$ & $\Sp(6)\times \Sp(4)$ & $\{\beta_{3}\}$ & $(\omega_{1}+\omega_{4},-\varpi_{3}),(\omega_{1}+\omega_{3}+\omega_{5},-\varpi_{3}),$\\
 &&&& $(\omega_{2}+\omega_{5},-\varpi_{3}),(\omega_{2},0),(\omega_{3},-\varpi_{3}),(\omega_{4},0)$\\ \hline


\ref{sym7cS} & $\Sp(4+2q)$ & $\Sp(4)\times \Sp(2q)$ & $\{\beta_{2}\}$ & $(\omega_{1}+\omega_{3},-\varpi_{2}),(\omega_{2},0),$\\
 &&&&$(\omega_{2},-\varpi_{2}),(\omega_{4},0)$ \\ \hline


\ref{sym7cS} & $\Sp(2p+4)$ & $\Sp(2p)\times \Sp(4)$ & $\{\beta_{2}\}$ & $(\omega_{1}+\omega_{q+1},-\varpi_{p}),(\omega_{1}+\omega_{3}+\omega_{p+2},-\varpi_{p}),$\\
 &$p\ge4$&&&$(\omega_{2}+\varpi_{p+2},-\varpi_{p}),(\omega_{2},0),$\\
&&&&$(\omega_{3}+\omega_{p+1},-\varpi_{p}),(\omega_{4},0),(\omega_{p},-\varpi_{p})$\\\hline


\ref{sym7cS} & $\Sp(2p+2)$ & $\Sp(2p)\times \Sp(2)$ & $\{\beta_{2}\}$ & $(\omega_{1}+\omega_{p+1},-\varpi_{p}),(\omega_{2},0),(\omega_{p},-\varpi_{p})$\\
 &$p\ge2$&&&\\\hline


\ref{sym7eS} & $\Sp(2p+2)$ & $\Sp(2p)\times \Sp(2)$ & $\{\beta_{i}\}$ & $(\omega_{1}+\omega_{i+1},-\varpi_{i}),(\omega_{2},0),$\\
 &$p\ge3$&&$2\leq i\leq p-1$& $(\omega_{i},-\varpi_{i}),(\omega_{i+2},-\varpi_{i})$\\\hline

\ref{sym7fS} & $\Sp(4+2q)$ & $\Sp(4)\times \Sp(2q)$ & $\{\beta_{1},\beta_{2}\}$ & $(\omega_{1}+\omega_{3},-\omega_{2}),(\omega_{1},-\varpi_{1}),(\omega_{2},-\varpi_{2})$\\
 &$q\ge2$&&&$(\omega_{2},0),(\omega_{3},-\varpi_{1}),(\omega_{4},0)$\\\hline


\ref{sym7hS} & $\Sp(2p+2)$ & $\Sp(2p)\times \Sp(2)$ & $\{\beta_{i},\beta_{j}\}$ & $(\omega_{1}+\omega_{i+1},-\varpi_{i}),(\omega_{1}+\omega_{j+1},-\omega_{j}),(\omega_{2},0),$ \\
 & $p\ge2$ && $1<i<j-1<p-1$& $(\omega_{i},-\varpi_{i}),(\omega_{i+1}+\omega_{j+1},-\varpi_{i}-\varpi_{j}),$\\
&&&& $(\omega_{i+2},-\varpi_{i}),(\omega_{j},-\varpi_{j}),(\omega_{j+2},-\varpi_{j})$\\\hline


\ref{sym7hS} & $\Sp(2p+2)$ & $\Sp(2p)\times \Sp(2)$ & $\{\beta_{i},\beta_{j}\}$ & $(\omega_{1}+\omega_{j+1},-\varpi_{1}),(\omega_{1},-\varpi_{1}),(\omega_{2},0),$\\
 &$p\ge2$&& $1=i<j-1<p-1$ & $(\omega_{2}+\omega_{j+1},-\varpi_{1}-\varpi_{j}),(\omega_{3},-\varpi_{1}),$\\
&&&&$(\omega_{j},-\varpi_{j}),(\omega_{j+2},-\varpi_{j})$\\\hline


\ref{sym7hS} & $\Sp(2p+2)$ & $\Sp(2p)\times \Sp(2)$ & $\{\beta_{i},\beta_{j}\}$ & $(\omega_{1}+\omega_{j+1},-\omega_{j}),(\omega_{1}+\omega_{i},-\omega_{j}),(\omega_{2},0),$\\
 &$p\ge2$&& $1<i=j-1<p-1$ & $(\omega_{i},-\varpi_{i}),(\omega_{j},-\varpi_{j}),$ \\
 &&&&$(\omega_{j+1},-\varpi_{i}),(\omega_{j+2},-\varpi_{j})$\\\hline


\ref{sym7hS} & $\Sp(2p+2)$ & $\Sp(2p)\times \Sp(2)$& $\{\beta_{i},\beta_{j}\}$ & $(\omega_{1}+\omega_{i+1},-\omega_{i}),(\omega_{1}+\omega_{p+1},-\omega_{p}),(\omega_{2},0),$\\
 &$p\ge2$&& $1<i<j-1=p-1$ & $((\omega_{i},-\omega_{i})),(\omega_{i+1}+\omega_{p+1},-\omega_{i}-\varpi_{p}),$ \\
&&&& $(\omega_{i+2},-\omega_{i}),(\omega_{p},-\omega_{p})$\\\hline


\ref{sym7hS} & $\Sp(2p+2)$ & $\Sp(2p)\times \Sp(2)$ & $\{\beta_{i},\beta_{j}\}$& $(\omega_{1}+\omega_{3},-\varpi_{2}),(\omega_{1},-\varpi_{1}),(\omega_{2},-0),$\\
 &$p\ge2$&& $1=i=j-1<p-1$ & $(\omega_{2},-\varpi_{2}),(\omega_{3},-\varpi_{1}),(\omega_{4},-\varpi_{2})$ \\\hline


\ref{sym7hS} & $\Sp(2p+2)$ & $\Sp(2p)\times \Sp(2)$ & $\{\beta_{i},\beta_{j}\}$& $(\omega_{1}+\omega_{p},-\varpi_{p-1}),(\omega_{1}+\omega_{p+1},-\varpi_{p}),$\\
 &$p\ge2$&& $1<i=j-1=p-1$ & $(\omega_{2},0),(\omega_{p-1},-\varpi_{p-1}),$\\
&&&&$(\omega_{p},-\varpi_{p}),(\omega_{p+1},-\varpi_{p-1})$\\\hline


\ref{sym7hS} & $\Sp(2p+2)$ & $\Sp(2p)\times \Sp(2)$ & $\{\beta_{i},\beta_{j}\}$& $(\omega_{1}+\omega_{3},-\varpi_{2}),(\omega_{1},-\varpi_{1}),(\omega_{2},-\varpi_{2}),$\\
 &$p\ge2$&& $1=i=j-1=p-1$ & 
$(\omega_{2},0),(\omega_{3},-\varpi_{1})$\\\hline


\ref{sym7iS} & $\Sp(2+2q)$ & $\Sp(2)\times \Sp(2q)$ & $\{\beta_{1},\beta_{i}'\}$ & $(\omega_{1}+\omega_{i+1},-\varpi_{i}'),(\omega_{1},-\varpi_{1}),(\omega_{2},0),$\\
 &$q\ge1$&& $1<i<q$ & $(\omega_{i},-\varpi_{i}'),(\omega_{i+1},-\varpi_{i}'-\omega_{1}),(\omega_{i+2},-\varpi_{i}')$\\\hline


\ref{sym7iS} & $\Sp(2+2q)$ & $\Sp(2)\times \Sp(2q)$ & $\{\beta_{1},\beta_{i}'\}$ & $(\omega_{1},-\varpi_{1}),(\omega_{1},-\varpi_{1}'),(\omega_{2},0),$\\
 &$q\ge1$&& $1=i<q$ & $(\omega_{2},-\varpi_{1}'-\varpi_{1}),(\omega_{3},-\varpi_{1}')$\\\hline


\ref{sym7iS} & $\Sp(2+2q)$ & $\Sp(2)\times \Sp(2q)$ & $\{\beta_{1},\beta_{i}'\}$ & $(\omega_{1}+\omega_{p+1},-\varpi_{q}'),(\omega_{1},-\varpi_{1}),(\omega_{2},0),$\\
 &$q\ge1$&& $1<i=q$ & $(\omega_{p},-\varpi_{q}'),(\omega_{p+1},-\varpi_{q}'-\varpi_{1})$ \\\hline


\ref{sym7iS} & $\Sp(2+2q)$ & $\Sp(2)\times \Sp(2q)$ & $\{\beta_{1},\beta_{i}'\}$ & $(\omega_{1},-\varpi_{1}),(\omega_{1},-\varpi_{1}'),$\\
 &$q\ge1$&& $1=i=q$ & $(\omega_{2},0),(\omega_{2},-\varpi_{1}'-\varpi_{1})$\\\hline\hline

\ref{sym8aS}&$\mathsf{F}_{4}$& $\Spin(9)$   & $\{\beta_{1},\beta_{2}\}$ & $(\omega_{1},-\varpi_{2}),(\omega_{2},-\varpi_{2}),(\omega_{3},-\varpi_{2})$\\
&&&&$(\omega_{3},-\varpi_{1}),(\omega_{4},-\varpi_{1}),(\omega_{4},0)$\\ \hline 

\ref{sym8bS}&$\mathsf{F}_{4}$& $\Spin(9)$   & $\{\beta_{3}\}$ & $(\omega_{1}+\omega_{4},-\varpi_{3}),(\omega_{1}+\omega_{3},-\varpi_{3}),(\omega_{2},-\varpi_{3})$\\
&&&&$(\omega_{3},-\varpi_{3}),(\omega_{4},0)$\\ \hline 

\ref{sym8cS}&$\mathsf{F}_{4}$& $\Spin(9)$   & $\{\beta_{4}\}$ & $(\omega_{1},-\varpi_{4}),(\omega_{3},-\varpi_{4}),$\\
&&&&$(\omega_{4},-\varpi_{4}),(\omega_{4},0)$\\ \hline \hline

\ref{sym8S}&$\mathsf{E}_{6}$& $\SO(10)\times\bbC^{\times}$ & $\{\beta_{1}\}$  & $(\omega_{1},-\varpi_{1}+2\epsilon),(\omega_{1},-4\epsilon),(\omega_{2},0),$\\
&&&&$(\varpi_{6},-\varpi_{1}-2\epsilon),(\omega_{6},4\epsilon),$\\
&&&&$(\omega_{5},-\varpi_{1}+2\epsilon),(\omega_{3},-\varpi_{1}-2\epsilon),$\\ \hline \hline

\ref{sym10S}&$\mathsf{E}_{6}$&$\mathsf{F}_{4}$&$\{\beta_{1}\}$&$(\omega_{1},0),(\omega_{2},-\varpi_{1}),(\omega_{3},-\varpi_{1})$\\
&&&&$(\omega_{4},-\varpi_{1}),(\omega_{5},-\varpi_{1}),(\omega_{6},0)$\\ \hline\hline

\ref{sym11S}&$\SL(n)\times\SL(n)$ & $\diag(\SL(n))$ & $\{\beta_{1}\}$  & $(\omega_{i}+\omega_{n-i}',0), 1\leq i\leq n-1$,\\
&&&& $(\omega_{i}+\omega_{n+1-i}',-\varpi_{1}), 1\leq i\leq n$.\\ \hline

\ref{sym11S}&$\SL(n)\times\SL(n)$ & $\diag(\SL(n))$ & $\{\beta_{n-1}\}$  & $(\omega_{i}+\omega_{n-i}',0), 1\leq i\leq n-1$,\\
&&&& $(\omega_{n+1-i}+\omega_{i}',-\varpi_{n-1}), 1\leq i\leq n$.\\\hline\hline


\ref{sph1S}&$\SL(p+q+2)$ & $\SL(p+1)\times\SL(q+1)$ & $\{\beta_{1}\}$ & $(\omega_{i}+\omega_{n+1-i},0), 1\leq i\leq p,$ \\
&$2\le p< q$&&& $(\omega_{j}+\omega_{n+2-j},-\varpi_{1}),2\leq j\leq p,$\\
&$n=p+q+1$&&& $(\omega_{q+2},-\varpi_{1}),(\omega_{1},-\varpi_{1}),$\\
&&&& $(\omega_{q+1},0),(\omega_{p+1},0)$\\ \hline

\ref{sph1S}&$\SL(p+q+2)$ & $\SL(p+1)\times\SL(q+1)$ &  $\{\beta_{p}\}$ & $(\omega_{i}+\omega_{n+1-i},0), 1\leq i\leq p,$ \\
&$2\le p<q$&&& $(\omega_{j}+\omega_{n-j},-\varpi_{p}),1\leq j\leq p-1,$\\
&$n=p+q+1$&&& $(\omega_{p},-\varpi_{p}),(\omega_{n},-\varpi_{p}),$\\
&&&& $(\omega_{p+1},0),(\omega_{q+1},0)$\\ \hline 

\ref{sph1S}&$\SL(q+3)$ & $\SL(2)\times\SL(q+1)$ & $\{\beta_{1}\}$ & $(\omega_{1},0),(\omega_{1},-\varpi_{1}),(\omega_{2},0),$ \\
&$q>1$&&&$(\omega_{q+1},0),(\omega_{q+2},-\varpi_{1})$\\
\hline


\ref{sph1S}&$\SL(q+3)$ & $\SL(2)\times\SL(q+1)$ & $\{\beta_{i}'\}$ & $(\omega_{1}+\omega_{i},-\varpi_{i+1}),(\omega_{1}+\omega_{n},0),$ \\
&$q\ge4$&&$2\leq i\leq q-2$& $(\omega_{2},0),(\omega_{i-1},-\varpi_{i+1}),$\\
&$n=q+2$&&& $(\omega_{i}+\omega_{n},-\varpi_{i+n}),$\\
&&&& $(\omega_{i+1},-\varpi_{i+1}),(\omega_{n-1},0)$\\ \hline

\ref{sph1S}&$\SL(q+2)$ & $\SL(q+1)$ & $S_{H}\smallsetminus\{\beta_{k}\}$ & $(\omega_{i},-\varpi_{i}), 2\leq i\leq n,$ but $i\ne k$ \\
&$q\ge1$&&$1\leq k\leq q$& $(\omega_{j},-\varpi_{j+1}),1\leq k\leq n-1,$ but $j\ne k$,\\
&$n=q+1$&&& $(\omega_{1},0),(\omega_{n},0)$\\
\hline
\hline

\ref{sph2S}&$\SO(4n+2)$ & $\SL(2n+1)$ & $\{\beta_{1}\}$ & $(\omega_{2i-1},-\varpi_{1}),1\leq i\leq n,$\\ 
&$n\ge 2$&&& $(\omega_{2j},0),1\leq i\leq n-1,$\\
&&&& $(\omega_{2n},0),(\omega_{2n+1},0),(\omega_{2n+1},-\varpi_{1})$\\\hline

\ref{sph2S}&$\SO(4n+2)$ & $\SL(2n+1)$ & $\{\beta_{2n}\}$ & $(\omega_{2i-1},-\varpi_{2n}),1\leq i\leq n,$\\ 
&$n\ge 2$&&& $(\omega_{2j},0),1\leq i\leq n-1,$\\
&&&& $(\omega_{2n},0),(\omega_{2n+1},0),(\omega_{2n+1},-\varpi_{2n})$\\ \hline\hline

\ref{sph3S}&$\Spin(9)$&$\Spin(7)$&$\{\beta_{1}\}$&$(\omega_{1},0),(\omega_{2},-\varpi_{1}),(\omega_{3},-\varpi_{1})$\\ 
&&&&$(\omega_{4},0),(\omega_{4},-\varpi_{1})$\\ \hline \hline

\ref{sph4aS}&$\Spin(7)$&$\mathsf{G}_{2}$&$\{\beta_{1}\}$&$(\omega_{1},-\varpi_{1}),(\omega_{2},-\varpi_{1})$\\ 
&&&&$(\omega_{3},0),(\omega_{3},-\varpi_{1})$\\ \hline
\ref{sph4bS}&$\Spin(7)$&$\mathsf{G}_{2}$&$\{\beta_{2}\}$&$(\omega_{1}+\omega_{2},-\varpi_{2}),(\omega_{2},-\varpi_{2})$\\ 
&&&&$(\omega_{1}+\omega_{3},-\varpi_{2}),(\omega_{3},0)$\\ \hline \hline

\ref{sph5S}&$\mathsf{G}_{2}$&$\SL(3)$&$\{\beta_{1}\}$&$(\omega_{1},0),(\omega_{1},-\varpi_{1}),(\omega_{2},-\varpi_{1})$\\ 

\hline
\ref{sph5S}&$\mathsf{G}_{2}$&$\SL(3)$&$\{\beta_{2}\}$&$(\omega_{1},0),(\omega_{1},-\varpi_{2}),(\omega_{2},-\varpi_{2})$\\  
\hline

\end{longtable}
\end{center}
\end{tiny}

\begin{tiny}
\begin{center}
\begin{longtable}{|c|c|c|c|c|}
\caption{$(G,H)$ spherical, not symmetric, $G$ not simple.}\\
\hline
 & $G$ & $H$ & $P$ & Generators $\widetilde{\Gamma}(G/P)$\\
\hline
\endfirsthead
\multicolumn{5}{c}%
{\tablename\ \thetable\ -- \textit{Continued from previous page}} \\
\hline
 & $G$ & $H$ & $P$ & Generators $\widetilde{\Gamma}(G/P)$\\\hline
\endhead
\hline \multicolumn{5}{r}{\textit{Continued on next page}} \\
\endfoot
\hline
\endlastfoot

\ref{sph6aS}&$\Sp(2m)\times \Sp(2n)$&$\Sp(2m-2)\times \SL(2)\times \Sp(2n-2)$&$\{\beta_{m-1}\}$&$(\omega_{1}+\omega_{m},-\varpi_{m-1}),(\omega_{1}+\omega_{1}',0),$\\ 
&$n>1,m>2$&&&$(\omega_{2},0),(\omega_{m-1},-\varpi_{m-1}),$\\
&&&&$(\omega_{m}+\omega_{1}',-\varpi_{m-1}),(\omega_{2}',0)$\\ \hline
\ref{sph6aS}&$\Sp(2m)\times \Sp(2)$&$\Sp(2m-2)\times \SL(2)$&$\{\beta_{m-1}\}$&$(\omega_{1}+\omega_{m},-\varpi_{m-1}),(\omega_{1}+\omega_{1}',0),$\\ 
&$m>2$&&&$(\omega_{2},0),(\omega_{m-1},-\varpi_{m-1}),$\\
&&&&$(\omega_{m}+\omega_{1}',-\varpi_{m-1})$\\ \hline
\ref{sph6aS}&$\Sp(4)\times \Sp(2n)$&$\Sp(2)\times \SL(2)\times \Sp(2n-2)$&$\{\beta_{1}\}$&$(\omega_{1},-\varpi_{1}),(\omega_{1}+\omega_{1}',0),$\\ 
&$n\ge2$&&&$(\omega_{2},0),$\\
&&&&$(\omega_{2}+\omega_{1}',-\varpi_{1}),(\omega_{2}',0)$\\ \hline
\ref{sph6aS}&$\Sp(4)\times \Sp(2)$&$\Sp(2)\times \SL(2)$&$\{\beta_{1}\}$&$(\omega_{1},-\varpi_{1}),(\omega_{1}+\omega_{1}',0),$\\ 
&&&&$(\omega_{2},0),(\omega_{2}+\omega_{1}',-\varpi_{1}),$\\ \hline
\ref{sph6aS}&$\Sp(2m)\times \Sp(2n)$&$\Sp(2m-2)\times \SL(2)\times \Sp(2n-2)$&$\{\beta_{i}\}$&$(\omega_{1}+\omega_{i+1},-\varpi_{i}),(\omega_{1}+\omega_{1}',0),$\\ 
&$1<i<m-1$&&&$(\omega_{2},0),(\omega_{i},-\varpi_{i}),(\omega_{2}',0),$\\
&$n>1$&&&$(\omega_{i+1}+\omega_{1}',-\varpi_{i}),(\omega_{i+2},-\varpi_{i})$\\ \hline
\ref{sph6aS}&$\Sp(2m)\times \Sp(2)$&$\Sp(2m-2)\times \SL(2)$&$\{\beta_{i}\}$&$(\omega_{1}+\omega_{i+1},-\varpi_{i}),(\omega_{1}+\omega_{1}',0),$\\ 
&$1<i<m-1$&&&$(\omega_{2},0),(\omega_{i},-\varpi_{i}),$\\
&&&&$(\omega_{i+1}+\omega_{1}',-\varpi_{i}),(\omega_{i+2},-\varpi_{i})$\\ \hline
\ref{sph6aS}&$\Sp(2m)\times \Sp(2n)$&$\Sp(2m-2)\times \SL(2)\times \Sp(2n-2)$&$\{\beta_{1}\}$&$(\omega_{1}+\omega_{1}',0),(\omega_{1},-\varpi_{1})$\\ 
&$m\ge3,n>1$&&&$(\omega_{2},0),(\omega_{2}+\omega_{1}',-\varpi_{1}),$\\
&&&&$(\omega_{3},-\varpi_{1}),(\omega_{2}',0)$\\ \hline
\ref{sph6aS}&$\Sp(2m)\times \Sp(2)$&$\Sp(2m-2)\times \SL(2)$&$\{\beta_{1}\}$&$(\omega_{1}+\omega_{1}',0),(\omega_{1},-\varpi_{1})$\\ 
&$m\ge3$&&&$(\omega_{2},0),(\omega_{2}+\omega_{1}',-\varpi_{1}),$\\
&&&&$(\omega_{3},-\varpi_{1})$\\ \hline\hline

\ref{sph6bS}&$\Sp(2m)\times \Sp(2n)$&$\Sp(2m-2)\times \SL(2)\times \Sp(2n-2)$&$\{\beta_{1}'\}$&$(\omega_{1}+\omega_{1}',0),$\\ 
&$n>1,m>1$&&&$(\omega_{1},-\varpi_{1}'),(\omega_{1}',-\varpi_{1}'),$\\
&&&&$(\omega_{2},0),(\omega_{2}',0)$\\ \hline

\ref{sph6bS}&$\Sp(2m)\times \Sp(2n)$&$\Sp(2m-2)\times \SL(2)\times \Sp(2n-2)$&$\{\beta_{1}'\}$&$(\omega_{1}+\omega_{1}',0),$\\ 
&$n>1,m>1$&&&$(\omega_{1},-\varpi_{1}'),(\omega_{1}',-\varpi_{1}'),$\\
&&&&$(\omega_{2},0),(\omega_{2}',0)$\\ \hline

\ref{sph6bS}&$\Sp(2m)\times \Sp(2)$&$\Sp(2m-2)\times \SL(2)$&$\{\beta_{1}'\}$&$(\omega_{1}+\omega_{1}',0),$\\ 
&$m>1=n$&&&$(\omega_{1},-\varpi_{1}'),(\omega_{1}',-\varpi_{1}'),$\\
&&&&$(\omega_{2},0)$\\ \hline

\end{longtable}
\end{center}

\end{tiny}

\begin{remark}
We take the opportunity to fix a mistake in \cite{MR3801483}. In Remark 4.4(4) of loc.cit.~it is claimed that for the case $\SL(n)/\mathrm{S}(\mathrm{L}(n-2)\times\mathrm{L}(2))$, an irreducible representation of $\SL(n)$ cannot contain two irreducible $\mathrm{S}(\mathrm{L}(n-2)\times\mathrm{L}(2))$-modules of highest weight $p\omega_{n-1}$ and $q\omega_{n-1}$ with $p\ne q$, upon restriction. This is not true. Indeed, the extended weight semigroup $\widetilde{\Gamma}(\SL(n)/P)$, where $P\subset \mathrm{S}(\mathrm{L}(n-2)\times\mathrm{L}(2))$ is the parabolic subgroup obtained by leaving out the last simple root, has $((\omega_{n-1},0)$ and $(\omega_{n-1},-\varpi_{n-1}))$ among its generators, see \ref{sym2-6S}. Hence, if $\lambda$ is the highest weight of an irreducible $\SL(n)$-representation that contains $p\varpi_{n-1}$, then $\lambda+\omega_{n-1}$ is the highest weight of an irreducible $\SL(n)$-representation that contains irreducible representations of $\mathrm{S}(\mathrm{L}(n-2)\times\mathrm{L}(2))$ of highest weight $p\varpi_{n-1}$ and $(p+1)\varpi_{n-1}$.

The induction arguments of loc.cit.~is still valid but it has to be applied more carefully. The result is that the subgroup $\GL(n-2)\times \SL(2)\times \Sp(2m-2)\subset\SL(n)\times \Sp(2m)$ does not admit a proper parabolic subgroup that remains spherical in $\SL(n)\times \Sp(2m)$. This is also in line with Remark \ref{remark: two colors}.
\end{remark}


\appendix

\section{Preliminaries on the cases}\label{s:preliminaries}

In the next two sections we consider spherical homogeneous spaces of the form $G/H$, where $G$ is semisimple simply connected and $H$ is a connected reductive proper subgroup, such that $H$ has a proper parabolic subgroup $P$ that is spherical in $G$. In other words $(G,H,P)$ is a multiplicity free system. For such triples, we compute the extended weight monoid of $G/P$. The choice of $H$ in its conjugacy class is often relevant for our computations and will be specified in each case.

We use many of the notations of Section~\ref{s:example}, let us recall them here and add some further ones. We denote by $\alpha_1,\alpha_2,\ldots$ the simple roots of $G$, numbered as in Bourbaki. The corresponding fundamental dominant weights will be denoted by $\omega_1,\omega_2,\ldots$. For $i\leq j$ we set $\alpha_{i,j} = \alpha_i+\ldots+\alpha_j$, and if $i>j$ then we set $\alpha_{i,j}=0$. For convenience in some formulae, if $G=\SL(n)$ we also set $\omega_n=0$.

If $G\subseteq \GL(n)$ is a classical group, it will be defined in such a way that $B$ (resp.\ $T$) can be taken to be the set of upper triangular (resp.\ diagonal) matrices in $G$.

If $G$ is the universal cover of a classical group, to simplify notations, we will implicitly replace $G$ with the classical group $G_0$ and the subgroups $H$, $P$ with their images $H_0$, $P_0$ in $G_0$. Thanks to Proposition~\ref{prop:G0}, it will be enough to notice that no element of $2S$ appears among the spherical roots of $G_0/P_0$, to assure that our computations carried out for $G_0$ are equivalent to those for $G$.

Unless otherwise stated, we denote by $S_H=\{\beta_1,\beta_2,\ldots,\beta_1',\beta_2',\ldots\}$ the simple roots of $H$, grouped according to the various simple normal subgroups of $H$ and numbered as in Bourbaki, corresponding as in the introduction to a choice of a Borel subgroup $B_H$ and a maximal torus $T_H\subseteq B_H$ of $H$. We recall that we take $B_H\subseteq B$ and $T_H\subseteq T$, and $P$ containing the opposite Borel subgroup $B_H^-$ of $B_H$ with respect to $T_H$.

We denote by $\varpi_1,\varpi_2,\ldots,\varpi_1',\varpi_2',\ldots$ the corresponding fundamental dominant weights, which we define as the elements of $\Chi(T_H)_\QQ$ that have the correct pairing with the simple coroots of $H$ and are zero on the subspace corresponding to the connected center of $H$. We call them the {\em fundamental weights} of $H$.

We will often restrict characters of groups to subgrups, or extend them when possible to characters of larger groups. To simplify notations, we will denote with the same symbol the original character and the restriction or extension, if no confusion arises.

We denote by $I$ the set of simple roots of $H$ such that the Levi subgroup of $P$ containing $T_H$ has set of simple roots $S_H\smallsetminus I$.

It will also be possible to choose a parabolic subgroup $Q$ of $G$, containing the Borel subgroup $B^-$ opposite to $B$ with respect to $T$, and minimal among the parabolic subgroups of $G$ containing a $G$-conjugate of $P$. This will enable us to apply the results of Section~\ref{s:howto} to the subgroup $P$. As before, we will denote by $L_Q$ the Levi subgroup of $Q$ containing $T$. Notice that $P$ is connected, therefore Theorem~\ref{thm:howto} yields the characters $\chi_D$ for all $D\in\Delta(G/P)$.

If $J\subseteq S$, then we will also use the notation $Q_J$ instead of simply $Q$, where $Q_J$ is the parabolic subgroup of $G$ containing $B_-$ and such that its Levi subgroup has simple roots $S\smallsetminus J$.

If a simple root $\alpha_i$ moves only one color, then the latter will be denoted by $D_i$ or $E_i$. If $\alpha_i$ moves two colors, they will be denoted by $D_i^+$ and $D_i^-$, or by $D_i$ and $E_i$.

If it doesn't create ambiguities, we will allow the abuse of notation of denoting in the same way elements of the Weyl groups of reductive groups and a choice of representatives in the normalizer of the chosen maximal torus.

Finally we record the following observations.

\begin{remark}\label{remark: duals}
Let $(G,H,P)$ be a strictly indecomposable multiplicity free system and let $P^{\mathrm{op}}$ be opposite to $P$ with respect to the maximal torus $T_{H}\subseteq H$. Then $(G,H,P^{\mathrm{op}})$ is also a strictly indecomposable multiplicity free system and the generators of $\widetilde{\Gamma}(G/P)$ are related to those of $\widetilde{\Gamma}(G/P^{\mathrm{op}})$ by $(\omega_{D},\chi_{D})\leftrightarrow(\omega_{D}^{*},\chi_{D}^{*})$ where 
$\omega_{D}^{*}$ is the highest weight of the dual of $\pi^{G}_{\omega_{D}}$ and
$\chi_{D}^{*}$ is the lowest weight of the dual representation of $\pi^{H}_{\chi_{D}}$.
\end{remark}

\begin{remark}\label{remark:subgroups}
Let $(G,H,P_1)$ and $(G,H,P_2)$ be multiplicity free systems with $P_1\subseteq P_2$. Then $\widetilde{\Gamma}(G/P_2)$ is the subset of $\widetilde{\Gamma}(G/P_1)$ of the couples $(\lambda,\omega)$ such that $\omega$ extends to a weight of $P_2$. For this reason, during the computations in Appendices~\ref{s:sym} and~\ref{s:nonsym}, sometimes we will only discuss those parabolic subgroups $P\subseteq H$ that are minimal such that $(G,H,P)$ is a multiplicity free system. We cannot just deal everywhere only with minimal cases though, because some computations for non-minimal subgroups will be necessary to complete computations for the minimal ones.
\end{remark}

\begin{remark}\label{remark:H'}
Let $(G,H,P)$ be a multiplicity free system, and let $\widetilde H\subseteq H$ be a connected reductive subgroup containing the commutator $(H,H)$. In this case it is harmless to assume that $H$ is the product of $\widetilde H$ and a subtorus $\widetilde T$ of $T$. The root systems of $H$ and $\widetilde K$ are naturally identified, so that $P$ corresponds to a parabolic subgroup $\widetilde P$ of $\widetilde H$ contained in $P$ and such that $\widetilde P \cdot \widetilde T=P$. Suppose now that $(G,\widetilde H,\widetilde P)$ is a multiplicity free system, and notice that then the natural map $G/\widetilde P\to G/P$ induces a bijection between the respective sets of colors, since $\widetilde T\subseteq B$. Proposition~\ref{prop:generators} implies that restriction of weights from $P$ to $\widetilde P$ induces an isomorphism $\widetilde\Gamma(G/P)\to\widetilde\Gamma(G/\widetilde P)$.
\end{remark}


\section{Symmetric cases}\label{s:sym}


\subsection{$\SL(2n)/\Sp(2n)$ with $n\ge2$}\label{sym1} 
We define $\Sp(2n)$ to be the stabilizer of the skew-symmetric bilinear form given by the matrix
\[
\left(
\begin{array}{ccccc}
0 & 0 & \ldots & 0 & 1 \\
0 & 0 & \ldots & 1 & 0 \\
\vdots & \vdots & \ddots & \vdots & \vdots \\
0 & -1 & \ldots & 0 & 0 \\
-1 & 0 & \ldots & 0 & 0 \\
\end{array}
\right)
\]
In this way $B_{H}=B\cap\Sp(2n)$ is a Borel subgroup of $\Sp(2n)$ and $T_{H}=T\cap\Sp(2n)\subseteq B_{H}$ is a maximal torus of $\Sp(2n)$ contained in $B_{H}$. The simple roots of $\Sp(2n)$ are given by $\beta_{i}(t)=t_{i}t_{i+1}^{-1}$ with $1\leq i\leq n-1$ and $\beta_{n}(t)=t_{n}^{2}$, where $t=(t_{1},\ldots,t_{n},t_{n}^{-1},\ldots,t_{1}^{-1})\in T_{H}$.

The group $H$ is simply connected, and the fundamental dominant weights $\varpi_1,\ldots,\varpi_n$ have the property that $\varpi_i$ is the restriction of $\omega_i$ and also of $\omega_{2n-i}$ to $T_H$, for all $i\in\{1,\ldots,n\}$.

We have three possibilities for $I$:
\begin{enumerate}
\item\label{sym1a} $I=\{\beta_1\}$ with any $n\geq3$,
\item\label{sym1b} $I=\{\beta_3\}$ with $n=3$,
\item\label{sym1c} $I=$ any subset of roots of $H$ with $n=2$.
\end{enumerate}

\subsubsection{$I=\{\beta_1\}$ with any $n\geq3$}\label{sym1aS}
The subgroup $P$ appears in~\cite{MR2183057} as case 6. The parameter ``$n$'' appearing in loc.cit.\ is equal here to $2n-1$. This gives
\[
\Sigma(G/P)=\{\alpha_1+\alpha_2,\ldots,\alpha_{2n-2}+ \alpha_{2n-1}\}
\]
and $\Delta(G/P)=\{D_{1},\ldots,D_{2n-1}\}$, so that $\alpha_i$ moves $D_i$ for all $i\in\{1,\ldots,2n-1\}$, and the Cartan pairing is given by $\rho(D_i)=\alpha_i^\vee|_{\Xi(G/P)}$.

We can take $Q$ to be the parabolic subgroup such that $L_Q$ has simple roots $\alpha_2,\ldots,\alpha_{2n-2}$. Then $Q$ is minimal for containing $P$ and $G/Q$ has two colors whose inverse images in $G/P$ are $D_1$ and $D_{2n-1}$. Since $\omega_1^P = \omega_{2n-1}^P=\varpi_{1}$ we have
\[
\chi_{D_1} = \chi_{D_{2n-1}} = -\varpi_{1}.
\]

Using the Cartan pairing one deduces from equalities~(\ref{eq:sphroot}) of Lemma~\ref{lemma:sphroot} the following system of equations:
\[
\left\{
\begin{array}{rcl}
0 & = & -\varpi_{1} + \chi_{D_2} - \chi_{D_3}\\
0 & = & \varpi_{1} + \chi_{D_2} + \chi_{D_3} - \chi_{D_4}\\
0 & = & - \chi_{D_2} + \chi_{D_3} + \chi_{D_4} - \chi_{D_5}\\
 & \vdots & \\
0 & = & -\chi_{D_{2n-5}} + \chi_{D_{2n-4}} + \chi_{D_{2n-3}} - \chi_{D_{2n-2}}\\
0 & = & -\chi_{D_{2n-4}} + \chi_{D_{2n-3}} + \chi_{D_{2n-2}} + \varpi_{1}\\
0 & = & - \chi_{D_{2n-3}} + \chi_{D_{2n-2}} - \varpi_{1}\\
\end{array}
\right.
\]
from which we obtain
\[
\begin{array}{lclcl}
\chi_{D_2} & = \ldots = & \chi_{D_{2n-2}} &=& 0,\\
\chi_{D_1} & = \ldots = & \chi_{D_{2n-1}} &=& -\varpi_{1}.
\end{array}
\]

\subsubsection{$I=\{\beta_3\}$ with $n=3$}\label{sym1bS}
The subgroup $P$ appears in~\cite{MR2183057} as case 27, giving
\[
\Sigma(G/P) = S,
\]
and $\Delta(G/P)=\{D_1^+=D_3^+=D_5^+, D_1^-=D_4^-, D_2^+, D_2^-=D_5^-, D_3^-, D_4^+ \}$. We can take $Q$ such that only $\alpha_3$ is not a simple root of $L_Q$. Then $Q$ is minimal for containing $P$. The inverse image in $G/P$ of the unique color of $G/Q$ is the unique color of $G/P$ moved only by $\alpha_3$, i.e.\ $D_3^-$. Then
\[
\chi_{D_3^-} = -\omega_3^P=-\varpi_{3}.
\]

The Cartan pairing of $G/P$ is
\[
\begin{array}{c|ccccc}
      & \alpha_1 & \alpha_2 & \alpha_3 & \alpha_4 & \alpha_5 \\
\hline
D_1^+ & 1 & -1 & 1 & -1 & 1 \\
D_1^- & 1 & 0 & -1 & 1 & -1 \\
D_2^+ & 0 & 1 & 0 & 0 & -1 \\
D_2^- & -1 & 1 & -1 & 0 & 1 \\
D_3^- & -1 & 0 & 1 & 0 & -1 \\
D_4^+ & -1 & 0 & 0 & 1 & 0
\end{array}
\]
which yields, thanks to the equalities~(\ref{eq:sphroot}) of Lemma~\ref{lemma:sphroot},
\[
\begin{array}{lclclcl}
\chi_{D_1^+} & = & \chi_{D_1^-} & = & \chi_{D_2^-} & = & -\varpi_{3},\\
& & \chi_{D_2^+} & = & \chi_{D_4^+} & = & 0.
\end{array}
\]

\subsubsection{$I=$ any subset of roots of $H$, $n=2$}\label{sym1cS}
We discuss only the minimal case $I=\{\beta_{1},\beta_{2}\}$; the subgroup $P$ corresponding to $I=\{\beta_1\}$ is found in~\cite{MR1424449} as the first case 5 in Table A, and the one corresponding to $I=\{\beta_2\}$ is found in~\cite{MR1424449} as the second case 7 in Table A.

We claim that the subgroup $P=B_H$ corresponding to $I=S_{H}$ has lattice $\Xi(G/B_H)=\Chi(T_G)$, the following spherical roots
\[
\Sigma(G/B_H) = S
\]
and colors $\Delta(G/B_H)=\{D_1^+=D_3^+, D_1, D_2^+, D_2^-, D_3^-\}$ with Cartan pairing
\[
\begin{array}{c|ccc}
      & \alpha_1 & \alpha_2 & \alpha_3 \\
\hline
D_1^+ & 1 & -1 & 1 \\
D_1^- & 1 & 0 & -1 \\
D_2^+ & 0 & 1 & 0 \\
D_2^- & -1 & 1 & -1 \\
D_3^- & -1 & 0 & 1 
\end{array}
\]

To prove this, we recall that $G/H$ has one spherical root $\frac12(\alpha_1+2\alpha_2+\alpha_3)$, which generates $\Xi(G/H)$, and one color $D_2'$ with valuation coinciding with $\alpha_2^\vee$ on the lattice of $G/H$ (see~\cite{MR1424449}, case 5A in Table 1). Using Corollary~\ref{cor:morphisms} and Proposition~\ref{prop:PinH} one checks that a subgroup $P$ corresponding to the above data is indeed conjugated in $G$ to a parabolic subgroup of $H$, so we may assume $P\supseteq B_H$. Formula~(\ref{eq:rankChar}) assures that its character group has rank $2$, hence $P=B_H$.

The subset of colors $\{D_1^+, D_2^+\}$ is the only one that corresponds to a $G$-equivariant morphism $G/B_H\to G/B_G$, therefore it corresponds to the inclusion $B_H\subseteq B_G$, and the colors $D_1^-, D_2^-, D_3^-$ are the inverse images of the three colors of $G/B_G$. We conclude
\[
\begin{array}{lclclcl}
\chi_{D_1^-} & = & -\omega_1^{B_H} & = &  -\varpi_{1},\\
\chi_{D_2^-} & = & -\omega_2^{B_H} & = &  -\varpi_{2},\\
\chi_{D_3^-} & = & -\omega_3^{B_H} & = &  -\varpi_{1},\\
\end{array}
\]

The equalities~(\ref{eq:sphroot}) of Lemma~\ref{lemma:sphroot} yield
\[
\begin{array}{lcl}
\chi_{D_1^+} & = & -\varpi_{2},\\
\chi_{D_2^+} & = & 0.
\end{array}
\]


\subsection{$\SL(p+q+2)/\mathrm S(\mathrm L(p+1)\times \mathrm L(q+1))$ with $p+q\ge1$ and $0\le p\le q$}\label{sym2} Set $n=p+q+1$.
We take the subgroup $H$ to be the matrices with blocks on the diagonal of sizes resp.\ $p+1$ and $q+1$, and zeros elsewhere. In this way $B_{H}=B\cap H$ is a Borel subgroup of $H$ with maximal torus $T_{H}=T$. The simple roots of $H$ are $\beta_{i}=\alpha_{i}$ for $1\leq i \leq p$ and $\beta'_j=\beta_{p+1+j}=\alpha_{p+1+j}$ for $1\leq j\leq q$. We set $\varpi_{i}=\omega_{i}$ for $1\leq i \leq n$, and $\varpi_{n+1}=0$. Then the $\varpi_{i}$'s are the fundamental weights of $H$, except for $\varpi_{n+1}$, and also except for $\varpi_{p+1}$, which is the restriction to $T$ of a generator of the character group of $H$. The dominant integral weights of $H$ are of the form $\sum_{k=1}^{n}b_{k}\omega_{k}$ with $b_{i}\in\ZZ_{\geq0}$ for all $i$ except for $b_{p+1}\in\bbZ$.

We have the following possibilities for $I$:

\begin{enumerate}
\item\label{sym2-1} $I=\{\beta_{1}\}$ with $p=1\le q$.
\item\label{sym2-2} $I=\{\beta_{1}\}$ with $2\leq p\le q$.
\item\label{sym2-3} $I=\{\beta_{p}\}$ with $2\leq p\le q$.
\item\label{sym2-4} $I=\{\beta'_{1}\}$ with $1\le p<q$.
\item\label{sym2-5} $I=\{\beta'_{q}\}$ with $1\le p<q$.
\item\label{sym2-6} $I=\{\beta'_{i}\}$ with $p=1,q\ge3$ and $i\in\{2,\ldots,q-1\}$.
\item\label{sym2-7} $I$= any subset of simple roots of $H$, with $p=0$.
\end{enumerate}

\subsubsection{$I=\{\beta_{1}\}$ with $p=1\le q$, i.e.\ $n=q+2$}\label{sym2-1S}
The subgroup $P$ is conjugated to the subgroup $\widetilde{P}$ that appears in \cite{MR2183057} as case 9; the parameter ``$p$'' of loc.cit.\ is equal here to $q$. We have $gPg^{-1}=\widetilde{P}$ with $g=w_{0}^{G}s_{1}$, where $w^{G}_{0}\in W_{G}$ is the longest element.

We have
\[
\Sigma(G/P)=\{\alpha_1, \alpha_{2,q+1}, \alpha_{q+2} \}
\]
and $\Delta(G/P)=\{ D_1^+=D_{q+2}^+, D_1^-, D_2, E_{q+1}, D_{q+2}^- \}$ with Cartan pairing
\[
\begin{array}{c|ccc}
      & \alpha_1 & \alpha_{2,q+1} & \alpha_{q+2} \\
\hline
D_1^+ & 1 & -1 & 1 \\
D_1^- & 1 & 0 & -1 \\
D_2 & -1 & 1 & 0 \\
E_{q+1} & 0 & 1 & -1 \\
D_{q+2}^- & -1 & 0 & 1 
\end{array}
\]
We can take $Q$ so that only $\alpha_{1}$ and $\alpha_{2}$ are not roots of $L_{Q}$, and the subset of colors corresponding to the inclusion $P\subseteq Q$ is $\{ D_1^+, E_{q+1}, D_{q+2}^-\}$. This is obvious if $q>1$, just by looking at which color is moved by which simple root. If $q=1$ the only other possibility $\{ D_1^+, D_2, D_{q+2}^-\}$ is excluded because it is not distinguished.

So the inverse images of the colors of $G/Q$ are $D_1^-$ and $D_2$, which yields
\[
\begin{array}{lclclcl}
\chi_{D_1^-} & = & -\omega_1^{P} & = &  -\varpi_{1},\\
\chi_{D_2} & = & -\omega_2^{P} & = &  -\varpi_{2}.
\end{array}
\]

The equalities~(\ref{eq:sphroot}) of Lemma~\ref{lemma:sphroot} yield
\[
\begin{array}{lcl}
\chi_{D_1^+} & = & 0,\\
\chi_{E_{q+1}} & = & \varpi_2,\\
\chi_{D_{q+2}^-} & = & \varpi_2-\varpi_1.
\end{array}
\]

\subsubsection{$I=\{\beta_{1}\}$ or $I=\{\beta_{p}\}$ with $2\leq p\le q$}\label{sym2-2S}\label{sym2-3S}
In view of Remark \ref{remark: duals} it is enough to consider only case \eqref{sym2-3}, i.e.~$I=\{\beta_{p}\}$.
The subgroup $P$ is conjugated to the subgroup $\widetilde{P}$ that appears in \cite{MR2183057} as case 11. The parameters ``$p$'' and ``$q$'' of loc.cit.\ are equal here respectively to $p$ and $q+1-p$. We have $gPg^{-1}=\widetilde{P}$ with $g=w_{0}^{G}w_{0}^{H}$.

The set of spherical roots and the set of colors are given by
\[
\Sigma(G/P) = \{\alpha_1,\ldots,\alpha_p, \alpha_{p+1,q+1}, \alpha_{q+2},\ldots,\alpha_n \},
\]
\[
\begin{array}{lcl}
\Delta(G/P) &=& \{ D_1^+ = D_n^+, D_2^+=D_{n-1}^+, \ldots,D_{p}^+=D_{q+2}^+, \\
            & & D_1^-= D_{n-1}^-, \ldots, D_{p-1}^-=D_{q+2}^-,D_{p}^-,D_{n}^-,\\
            & & D_{p+1}, E_{q+1} \}.
\end{array}
\]

The Cartan pairing is given by the following matrix:
\begin{scriptsize}
\[
\begin{array}{c|cccccccccccccc}
& \alpha_1 & \alpha_2 & \alpha_3 & \alpha_4 & \ldots & \alpha_{p-1} & \alpha_p & \alpha_{p+1,q+1} & \alpha_{q+2} & \alpha_{q+3}& \ldots & \alpha_{p+q-1} & \alpha_{p+q} & \alpha_{p+q+1} \\
\hline
D_1^+ & 1 & 0 & 0 & 0 &\ldots & 0 & 0 & 0 & 0 & 0 & \ldots & 0 & -1 & 1 \\
D_1^- & 1 & -1 & 0 & 0 &\ldots & 0 & 0 & 0 & 0 & 0 & \ldots & 0 & 1 & -1 \\
D_2^+ & -1 & 1 & 0 & 0 &\ldots & 0 & 0 & 0 & 0 & 0 & \ldots & -1 & 1 & 0 \\
D_2^- & 0 & 1 & -1 & 0 &\ldots & 0 & 0 & 0 & 0 & 0 & \ldots & 1 & -1 & 0 \\
D_3^+ & 0 & -1 & 1 & 0 &\ldots & 0 & 0 & 0 & 0 & 0 & \ldots &  1 & 0 & 0\\
D_3^- & 0 & 0 & 1 & -1 &\ldots & 0 & 0 & 0 & 0 & 0 & \ldots & -1 & 0 & 0\\
\vdots & \vdots &\vdots &\vdots &\vdots &\vdots &\vdots &\vdots &\vdots &\vdots &\vdots &\vdots &\vdots &\vdots &\vdots \\
D_{p-1}^+ & 0 & 0 & 0 & 0 &\ldots & 1 & 0 & 0 & -1 & 1 & \ldots & 0 & 0 & 0\\
D_{p-1}^- & 0 & 0 & 0 & 0 &\ldots & 1 & -1 & 0 & 1 & -1 & \ldots & 0 & 0 & 0\\
D_{p}^+ & 0 & 0 & 0 & 0 &\ldots & -1 & 1 & -1 & 1 & 0 & \ldots & 0 & 0 & 0\\
D_{p}^- & 0 & 0 & 0 & 0 &\ldots & 0 & 1 & 0 & -1 & 0 & \ldots & 0 & 0 & 0\\
D_{p+1} & 0 & 0 & 0 & 0 &\ldots & 0 & -1 & 1 & 0 & 0 & \ldots & 0 & 0 & 0\\
E_{q+1} & 0 & 0 & 0 & 0 &\ldots & 0 & 0 & 1 & -1 & 0 & \ldots & 0 & 0 & 0\\
D_{n}^- & -1 & 0 & 0 & 0 &\ldots & 0 & 0 & 0 & 0 & 0 & \ldots & 0 & 0 & 1\\
\end{array}
\]
\end{scriptsize}

We can take $Q$ so that only $\alpha_p$ and $\alpha_{p+1}$ are not simple roots of $L_Q$. Then $Q$ is minimal parabolic containing $P$.
The inverse images in $G/P$ of the two colors of $G/Q$ are $D_p^-$ and $D_{p+1}$. This is obvious if $p<q$, since these are the only colors of $G/P$ that are moved only by resp.~$\alpha_{p}$ and $\alpha_{p+1}$. If $p=q$, then one excludes the other possibility, namely $D_{p}^{-}$ and $E_{p+1}$, checking that the set $\Delta(G/P)\smallsetminus\{D_{p}^{-},E_{p+1}\}$ is not parabolic. This can be seen by looking just at the two spherical roots $\alpha_{p+1}$ and $\alpha_{p}$: the first implies $a_{p+1}>a_{p}^{+}$ (where $a_i^{(\pm)}$ is the coefficient of $D_i^{(\pm)}$ in a linear combination that is positive on all spherical roots) while the second implies $a_{p}^{+}-a_{p+1}-a_{p-1}^{-}>0$, but this yields a contradiction. Therefore
\[
\begin{array}{lclclcl}
\chi_{D_p^-} & = & -\omega_{p}^{P} & = &  -\varpi_{p},\\[5pt]
\chi_{D_{p+1}} & = & -\omega_{p+1}^{P} & = &  -\varpi_{p+1}.
\end{array}
\]

We obtain
\[
\begin{array}{lclclclclcl}
\chi_{D_1^-} & = & \chi_{D_2^-} & = & \ldots & = & \chi_{D_{p-1}^-} & = & \chi_{D_{n}^-} &=&-\varpi_p+\varpi_{p+1},\\[5pt]
& & \chi_{D_1^+} & = & \chi_{D_2^+} & = & \ldots & = & \chi_{D_p^+} & = & 0,\\[5pt]
& & & & & & & & \chi_{E_{q+1}} & = & \varpi_{p+1},
\end{array}
\]
which is easily checked to satisfy equalities~(\ref{eq:sphroot}) of Lemma~\ref{lemma:sphroot}.
\subsubsection{$I=\{\beta'_{1}\}$ or $I=\{\beta'_{q}\}$ with $1\le p<q$}\label{sym2-4S}\label{sym2-5S}
In view of Remark \ref{remark: duals} it is enough to consider only case \eqref{sym2-4}, i.e.~$I=\{\beta_{1}'\}$.
The subgroup $P$ is conjugated to the subgroup $\widetilde{P}$ that appears in \cite{MR2183057} as case 12, the parameters ``$p$'' and ``$q$'' of loc.cit.\ are equal here respectively to $p+1$ and $q-p$. We have $gPg^{-1}=\widetilde{P}$ with $g=w_{0}^{G}w_{0}^{H}$, where $w_{0}^{G}\in W_{G}$ and $w_{0}^{H}\in W_{H}$ are the longest Weyl group elements.

This gives
\[
\Sigma(G/P) = \{\alpha_1,\ldots,\alpha_{p+1}, \alpha_{p+2,q+1}, \alpha_{q+2},\ldots,\alpha_n \}
\]
and
\[
\begin{array}{lcl}
\Delta(G/P) &=& \{ D_1^+, D_2^+=D_{n}^+, \ldots,D_{p+1}^+=D_{q+2}^+, \\
            & & D_1^-= D_{n}^-, \ldots, D_{p}^-=D_{q+2}^-,D_{p+1}^-,\\
            & & D_{p+2}, E_{q+1} \}.
\end{array}
\]
The Cartan pairing is
\begin{scriptsize}
\[
\begin{array}{c|cccccccccccccc}
& \alpha_1 & \alpha_2 & \alpha_3 & \alpha_4 & \ldots & \alpha_{p} & \alpha_{p+1} & \alpha_{p+2,q+1} & \alpha_{q+2} & \alpha_{q+3}& \ldots & \alpha_{p+q-1} & \alpha_{p+q} & \alpha_{p+q+1} \\
\hline
D_1^+ & 1 & 0 & 0 & 0 &\ldots & 0 & 0 & 0 & 0 & 0 & \ldots & 0 & 0 & -1 \\
D_1^- & 1 & -1 & 0 & 0 &\ldots & 0 & 0 & 0 & 0 & 0 & \ldots & 0 & 0 & 1 \\
D_2^+ & -1 & 1 & 0 & 0 &\ldots & 0 & 0 & 0 & 0 & 0 & \ldots & 0 & -1 & 1 \\
D_2^- & 0 & 1 & -1 & 0 &\ldots & 0 & 0 & 0 & 0 & 0 & \ldots & 0 & 1 & -1 \\
D_3^+ & 0 & -1 & 1 & 0 &\ldots & 0 & 0 & 0 & 0 & 0 & \ldots &  -1 & 1 & 0\\
D_3^- & 0 & 0 & 1 & -1 &\ldots & 0 & 0 & 0 & 0 & 0 & \ldots & 1 & -1 & 0\\
\vdots & \vdots &\vdots &\vdots &\vdots &\vdots &\vdots &\vdots &\vdots &\vdots &\vdots &\vdots &\vdots &\vdots &\vdots \\
D_{p}^+ & 0 & 0 & 0 & 0 &\ldots & 1 & 0 & 0 & -1 & 1 & \ldots & 0 & 0 & 0\\
D_{p}^- & 0 & 0 & 0 & 0 &\ldots & 1 & -1 & 0 & 1 & -1 & \ldots & 0 & 0 & 0\\
D_{p+1}^+ & 0 & 0 & 0 & 0 &\ldots & -1 & 1 & -1 & 1 & 0 & \ldots & 0 & 0 & 0\\
D_{p+1}^- & 0 & 0 & 0 & 0 &\ldots & 0 & 1 & 0 & -1 & 0 & \ldots & 0 & 0 & 0\\
D_{p+2} & 0 & 0 & 0 & 0 &\ldots & 0 & -1 & 1 & 0 & 0 & \ldots & 0 & 0 & 0\\
E_{q+1} & 0 & 0 & 0 & 0 &\ldots & 0 & 0 & 1 & -1 & 0 & \ldots & 0 & 0 & 0
\end{array}
\]
\end{scriptsize}

We can take $Q$ so that only $\alpha_{p+1}$ and $\alpha_{p+2}$ are not simple roots of $L_Q$. The inverse images in $G/P$ of the two colors of $G/Q$ are $D_{p+1}^-$ and $D_{p+2}$ (if $q=1$ the other possibility $D_{p+1}^+$ and $E_{q+1}$ is excluded as before). Therefore
\[
\begin{array}{lclclcl}
\chi_{D_{p+1}^-} & = & -\omega_{p+1}^{P} & = &  -\varpi_{p+1},\\[5pt]
\chi_{D_{p+2}} & = & -\omega_{p+2}^{P} & = &  -\varpi_{p+2}.
\end{array}
\]

We obtain
\[
\begin{array}{lclclclcl}
\chi_{D_1^-} & = & \chi_{D_2^-} & = & \ldots & = & \chi_{D_{p}^-} & = & 0,\\
\chi_{D_1^+} & = & \chi_{D_2^+} & = & \ldots & = & \chi_{D_{p+1}^+} & = & \varpi_{p+1}-\varpi_{p+2},\\
 & & & & & & \chi_{E_{q+1}} & = & \varpi_{p+1}.
\end{array}
\]
which is easily checked to satisfy equalities~(\ref{eq:sphroot}) of Lemma~\ref{lemma:sphroot}.

\subsubsection{$I=\{\beta'_{i}\}$ with  $p=1,q\ge3$,  $i\in\{2,\ldots,q-1\}$}\label{sym2-6S}
A conjugate $\widetilde P$ of $P$ appears as case 7 of \cite{MR2183057}; the parameters ``$p$'' and ``$q$'' of loc.cit.\ are equal here respectively to $i-1$ and $q-i$. Denoting by $\widetilde H=\mathrm S(\mathrm L(i+2)\times \mathrm L(q+1-i))$ and by $g=w_0^{\widetilde H}$ the longest element of its Weyl group, then $\widetilde P=gP g\inv$. This yields
\[
\Sigma(G/P) = \{\alpha_1,\alpha_{2,i}, \alpha_{i+1},\alpha_{i+2,n-1},\alpha_n \}
\]
and
\[
\Delta(G/P) = \{ D_1^+ = D_{i+1}^+, D_1^-=D_{n}^+, D_{2}, E_{i}, D_{i+1}^-=D_{n}^-, D_{i+2}, E_{n-1} \},
\]
with Cartan pairing

\begin{scriptsize}
\[
\begin{array}{c|cccccc}
& \alpha_1 & \alpha_{2,i} & \alpha_{i+1} & \alpha_{i+2,n-1} & \alpha_n \\
\hline
D_1^+ & 1 & -1 & 1 & 0 & -1  \\
D_1^- & 1 & 0 & -1 & 0 & 1  \\
D_2 & -1 & 1 & 0 & 0 & 0  \\
E_{i} & 0 & 1 & -1 & 0 & 0  \\
D_{i+1}^- & -1 & 0 & 1 & -1 & 1  \\
D_{i+2} & 0 & 0 & -1 & 1 & 0  \\
E_{n-1} & 0 & 0 & 0 & 1 & -1  \\
\end{array}
\]
\end{scriptsize}

We can take $Q$ so that only $\alpha_{2}$ and $\alpha_{i+2}$ are not simple roots of $L_Q$, and the inverse images in $G/P$ of the two colors of $G/Q$ are $D_{2}$ and $D_{i+2}$. This is shown easily if $i>2$ or if $i<q-1$, as above: one excludes the other possibilities by looking at which color is moved by which simple root. Let us postpone the discussion of the case $i=2=q-1$.

This yields
\[
\begin{array}{lclclcl}
\chi_{D_{2}} & = & -\omega_{2}^{P} & = &  -\varpi_{2},\\[5pt]
\chi_{D_{i+2}} & = & -\omega_{i+2}^{P} & = &  -\varpi_{i+2}
\end{array}
\]
which implies
\[
\begin{array}{lclcl}
\chi_{E_{i}} & = & \chi_{D_{i+1}^-} & = & \varpi_{2}-\varpi_{i+2},\\
& & \chi_{D_1^+} & = & -\varpi_{i+2},\\
& & \chi_{D_1^-} & = & 0,\\
& & \chi_{E_{n-1}} & = & \varpi_{2}.
\end{array}
\]

If $i=2=q-1$, the computation above is the same, but there is another possible parabolic subset of colors, namely $\{ E_2, E_4\}$. It does correspond to a $G$-equivariant morphism $G/P\to G/Q$, which is not the one induced by the inclusion $P\subseteq Q$. It is rather obtained from the latter by twisting it with a $G$-equivariant automorphism of $G/P$. However, it is not necessary to prove these claims: for us, it is enough to notice that the resulting generators of the extended weight monoid of $G/P$ turn up to be the same (except for the ``labeling'' with the colors), as the reader may easily check, if one takes $E_2$ and $E_4$ instead of our choice $D_{2}$ and $D_{i+2}$.

\subsubsection{$p=0$ and $I=$ any subset of simple roots of $H$}\label{sym2-7S}
If $I=S_H$ then $P=B_H^-$: the spherical roots, the colors, the Cartan pairing, and also the extended weight monoid of $G/P$ were computed in this case in \cite{MR3366923}. Thanks to Remark~\ref{remark:subgroups}, the other cases of the subset $I$ follow from this one. However, for completeness, let us recompute the extended weight monoid of $G/B_H^-$ in a direct manner and see how the other cases of $I$ can be derived from our computation.

We do this by finding global equations in $G$ of the pull-back of the colors of $G/B_H^-$. For this task it is more convenient to first replace $G$ with $\GL(n+1)$, and replace the groups $B$ and $B_H^-$ with their inverse images in $\GL(n+1)$.

Consider the regular functions $a_i,b_i\in \CC[\GL(n+1)]$, for $i\in\{1,\ldots,n\}$ defined as follows. For $g\in \GL(n+1)$, we set $a_i(g)$ to be the determinant of the lower right minor of size $n+1-i$, and we set $b_i(g)$ to be the determinant of the minor of $g$ obtained by taking the last $n+1-i$ rows, the first column and the last $n-i$ columns.

The functions $a_i$ and $b_i$ are left $B$-semiinvariant and right $B^-_H$-semiinvariant, and they are given by irreducible polynomials. Their zero sets are moved only by $\alpha_i$. Therefore the images of their zero sets in $G/B_H^-$, denoted resp.\ by $D_i^+$ and $D_i^-$,  are the two colors moved by $\alpha_i$. Since no other colors can be moved by any simple root of $G$, we conclude
\[
\Delta(G/B_H^-) = \{D_1^+,D_1^-,\ldots, D_n^+,D_n^-\}.
\]
We get back to $G=\SL(n+1)$, where one easily deduces
\[
\chi_{D_i^+} = -\omega_i^{B_H^-} = -\varpi_i
\]
and
\[
\chi_{D_i^-} = \omega_1^{B_H^-}-\omega_{i+1}^{B_H^-}=\varpi_1-\varpi_{i+1}
\]
for all $i$.

We can now compute the extended weight monoid of $G/P$, with $P$ corresponding to any $I$ as above: it is enough to determine which colors of $G/B_H^-$ are mapped to colors of $G/P$ under the natural map $G/B_H^-\to G/P$ (i.e.\ are not mapped dominantly), which is elementary to accomplish given the global equations on $G$ we have given.

We obtain that $D_1^+$ is mapped to a color of $G/P$, and for $i\in\{2,\ldots,n\}$ the color $D_i^+$ is mapped to a color of $G/P$ if and only if $\beta_{i-1}\in I$. Moreover $D_n^-$ is mapped to a color of $G/P$, and for $i\in\{1,\ldots,n-1\}$ the color $D_i^-$ is mapped to a color of $G/P$ if and only if $\beta_i\in I$.


\subsection{$\SO(2n+2)/(\SO(2n)\times \SO(2))$ with $n\geq 3$}\label{sym3}\label{sym3aS}\label{sym3bS}
This case is discussed in Section~\ref{s:example}.


\subsection{$\SO(2n+1)/\SO(2n)$ with $n\ge3$}\label{sym4}

Here $I$ is any subset of simple roots of $H$. We discuss the case $P=B_H^-$, and deal with the general case using Remark~\ref{remark:subgroups}.

It is useful to define $G=\SO(2n+1)$ to be the stabilizer of the symmetric bilinear form given by $(e_i,e_{2n-j+2})=\delta_{i,j}$ for all $i,j\in\{1,\ldots,2n+1\}$, where $(e_1,\ldots,e_{2n+1})$ is the standard basis of $\CC^{2n+1}$. We take $H=\SO(2n)$ to be the stabilizer in $G$ of $e_{n+1}$. We recall that, with this choice, the subgroups $B$, $T$, $B_H$, $B_H^-$, and $T_H=T$ can be taken as in Appendix~\ref{s:preliminaries}. We have $\alpha_1=\beta_1$,$\ldots$, $\alpha_{n-1}=\beta_{n-1}$, $\alpha_{n-1}+2\alpha_n=\beta_n$, and $\omega_1=\varpi_1$,$\ldots$, $\omega_{n-2}=\varpi_{n-2}$, $\omega_{n-1}=\varpi_{n-1}+\varpi_{n}$, $\omega_{n}=\varpi_{n}$.

The case $P=B_H^-$ belongs again to the class of subgroups studied in~\cite{MR3366923}. Here, for convenience, we determine the extended weight monoid of $G/P$ more directly, by finding global equations on $G$ of its colors.

For $i\in\{1,\ldots,n-1\}$, set $h=2n+1-i$ and let $f_i(A)$ be the determinant of the $h\times h$-minor in the lower right corner of the matrix $A\in G$. Then $f_i\in\CC[G]$ is $B$-semiinvariant for the left translation and $B^-$-semiinvariant for the right translation on $G$, and the $B$-weight is the $i$-th fundamental dominant weight. It follows that $f_i$ is a global equation of the (pull-back on $G$ of the) color $D_i$ of $G/B^-$ moved by $\alpha_i$. One defines similarly $f_n$ and $D_n$, and for the same reason the pull-back of $f_n$ to $\Spin(2n+1)$ is the square of a global equation for $D_{n}$.

The pull-back of these colors from $G/B^-$ to $G/B_H^-$ via the inclusion $B_H^-\subseteq B^-$ produces $n$ colors of $G/B_H^-$, denoted by $D^+_1,\ldots,D^+_n$, where for all $i$ the color $D^+_i$ is moved exactly by $\alpha_i$. Their $B$-weights are evidently the fundamental dominant weights, and the $B_-$-weights are the opposites.  To summarize, we have
\[
\chi_{D_i^+}  =  -\omega_{i}^{B_H^-} \quad\forall i\in\{1,\ldots,n\}
\]
i.e.\
\[
\begin{array}{lcl}
\chi_{D_i^+}  &=&  -\varpi_{i} \qquad\forall i\in\{1,\ldots,n-2,n\},\\
\chi_{D_{n-1}^+} &=& - \varpi_{n-1}-\varpi_{n}
\end{array}
\]

We claim that $G/B_H^-$ has other $n$ colors $D_1^-,\ldots,D_n^-$. For $j\in\{1,\ldots,n \}$, set $k=2n+1-j$ and denote by $F_j(A)$ the determinant of the $k\times k$-minor involving the last $k$ rows of the matrix $A\in G$ and the columns $j$ through $n$ and $n+2$ through $2n+1$. The function $F_j\in\CC[G]$ is $B$-semiinvariant under left translation and $B_H^-$-semiinvariant under right translation. If $j<n$ then its $B$-weight is $\omega_j$, and the $B$-weight of $F_n$ is $2\omega_n$.

From Lemma~\ref{lemma:foschi}, it follows that $F_1,\ldots,F_{n-1}$ are global equations of $n-1$ colors $D^-_1,\ldots,D^-_{n-1}$ of $G/B_H^-$, where $D^-_j$ is moved exactly by $\alpha_j$. Since a simple root can move up to $2$ colors, there is at most one other color in $G/B_H^-$, and in this case it is moved by $\alpha_n$. The divisor of the function $F_n$ has irreducible components that are colors moved only by $\alpha_n$, and it does not coincide (as a set) with $D_n^+$. So the additional color moved by $\alpha_n$ exists and $F_n$ vanishes on it, let us denote it by $D_n^-$. Given the $B$-weight of $F_n$, its divisor is either $D_n^+ + D_n^-$ or $2D_n^-$. The second possibility is excluded because the $B_H^-$-weight of $F_n$ is not divisible by $2$.

This yields
\[
\begin{array}{lcl}
\chi_{D_1^-}  &=&  0,\\
\chi_{D_i^-}  &=&  -\omega_{i-1}^{B_H^-} \qquad\forall i\in\{2,\ldots,n-1\},\\
\chi_{D_n^-}  &=&  -\omega_{n-1}^{B_H^-}+\omega_{n}^{B_H^-}.
\end{array}
\]
i.e.\
\[
\begin{array}{lcl}
\chi_{D_1^-}  &=&  0,\\
\chi_{D_i^-}  &=&  -\varpi_{i-1} \qquad\forall i\in\{2,\ldots,n\}.
\end{array}
\]
Since $B_H^-$ is minimal among parabolic subgroups of $H$, we may skip the other possibilities for $P$ (see Remark~\ref{remark:subgroups}).


\subsection{$\SO(2n+2)/\SO(2n+1)$ with $n\ge3$}\label{sym5} Here $I$ is any subset of simple roots of $H$. We discuss the case $P=B_H^-$, and deal with the general case using Remark~\ref{remark:subgroups}.

It is useful to define $G=\SO(2n+2)$ to be the stabilizer of the symmetric bilinear form given by $(e_i,e_{2n-j+3})=\delta_{i,j}$ for all $i,j\in\{1,\ldots,2n+2\}$, where $(e_1,\ldots,e_{2n+2})$ is the standard basis of $\CC^{2n+2}$. We take $H=\SO(2n+1)$ to be the stabilizer in $G$ of $e_{n+1}-e_{n+2}$. We recall that, with this choice, the subgroups $B$, $T$, $B_H$, $B_H^-$, and $T_H$ can be taken as in Appendix~\ref{s:preliminaries}. The simple roots of $G$ are $\alpha_1=\varepsilon_1-\varepsilon_2,\ldots,\alpha_n=\varepsilon_n-\varepsilon_{n+1},\alpha_{n+1}=\varepsilon_n+\varepsilon_{n+1}$, where $\varepsilon_i$ is the character of $T$ given by taking the $i$-th entry on the diagonal. We have $\alpha_i|_{T_H}=\beta_i$ and $\omega_i|_{T_H}=\varpi_i$ for all $i\in\{1,\ldots,n\}$, and $\alpha_{n+1}|_{T_H}=\beta_n$, $\omega_{n+1}|_{T_H}=\varpi_n$,  since $\varepsilon_{n+1}$ is trivial on $T_H$.

To discuss the case $P=B_H^-$ we use the results on the extended weight monoid in \cite{MR3366923}. Following loc.cit., the first step is to compute the {\em active roots} of $B_H^-$, i.e.\ the positive roots $\alpha$ of $G$ such that the root space $\fg_{-\alpha}$ of the Lie algebra $\fg$ of $G$ is not contained in the Lie algebra of $B_H^-$. It is elementary to compute the active roots in this case, and they are
\[
\begin{array}{rr}
\alpha_n, & \alpha_{n+1}, \\
\alpha_{n-1}+\alpha_n, & \alpha_{n-1}+\alpha_{n+1}, \\
\alpha_{n-2}+\alpha_{n-1}+\alpha_n, & \alpha_{n-2}+\alpha_{n-1}+\alpha_{n+1}, \\
\vdots & \vdots \\
\alpha_{1}+\alpha_{2}+\ldots+\alpha_{n-1}+\alpha_n, & \alpha_{1}+\alpha_{2}+\ldots+\alpha_{n-1}+\alpha_{n+1}.
\end{array}
\]
Notice that active roots on the same line in the above list have the same restriction to $T_H$. Denote by $\tau$ the restriction of characters from $T$ to $T_H$, and set $\varphi_i=\tau(\alpha_i+\ldots+\alpha_n)$. Then we observe that $\tau(\alpha_i)=\varphi_i-\varphi_{i+1}$ for all $i\in\{1,\ldots,n-1\}$, and $\tau(\alpha_n)=\tau(\alpha_{n+1})=\varphi_n$.

By the results of loc.cit., we have the following:
\[
\Sigma(G/B_H^-) = S
\]
and
\[
\Delta(G/B_H^-) = \left\{ D_i^+, D_i^-\;|\; i\in \{1,\ldots,n+1 \}\right\}
\]
where $D^-_n=D^-_{n+1}$. The Cartan pairing is as follows (all the other values that are not explicitly given are zero):
\begin{enumerate}
\item $\<\rho(D_1^+),\alpha_1\> =\<\rho(D_1^-),\alpha_1\>=1$, and $\<\rho(D_1^+),\alpha_2\> =-1$;
\item for all $i\in\{2,\ldots n-2\}$ we have $\<\rho(D_i^+),\alpha_{i}\>=\<\rho(D_i^-),\alpha_{i}\>=1$, and $\<\rho(D_i^+),\alpha_{i+1}\> =\<\rho(D_i^-),\alpha_{i-1}\>-1$;
\item we have
\[
\begin{array}{c|cccc}
      & \alpha_{n-2} & \alpha_{n-1} & \alpha_{n} &\alpha_{n+1}\\
\hline
D_{n-1}^+ & 0 & 1 & -1 & -1 \\
D_{n-1}^- & -1 & 1 & 0 & 0 \\
D_{n}^+ & 0 & 0 & 1 & -1 \\
D_{n}^- & 0 & -1 & 1 & 1 \\
D_{n+1}^+ & 0 & 0 & -1 & 1
\end{array}
\]
\end{enumerate}
Again using loc.cit., or applying our usual method, we obtain
\[
\begin{array}{lcl}
\chi_{D_i^+} & = & -\omega_{i}^{B_H^-} \quad\forall i\in\{1,\ldots,n+1\},\\
\chi_{D_1^-} & = & 0,\\
\chi_{D_j^-} & = & -\omega_{j-1}^{B_H^-} \quad\forall j\in\{2,\ldots,n\}.
\end{array}
\]
which we can rewrite as
\[
\begin{array}{lcl}
\chi_{D_i^+} & = & -\varpi_{i} \quad\forall i\in\{1,\ldots,n\},\\
\chi_{D_{n+1}^+} & = & -\varpi_{n},\\
\chi_{D_1^-} & = & 0,\\
\chi_{D_j^-} & = & -\varpi_{j-1} \quad\forall j\in\{2,\ldots,n\}.
\end{array}
\]

Since $B_H^-$ is minimal among parabolic subgroups of $H$, we may skip the other possibilities for $P$ (see Remark~\ref{remark:subgroups}).

\subsection{$\SO(2n+2)/\GL(n+1)$ with $n\ge 3$}\label{sym6S} The subgroup $H$ is a Levi subgroup of the parabolic subgroup of $G$ by omitting the one but last root $\alpha_{n}$ of $G$. We have the following possibilities for $I$:
\begin{enumerate}
\item\label{sym6a} $I=\{\beta_{1}\}$,
\item\label{sym6b} $I=\{\beta_n\}$.
\end{enumerate}

The two cases are related by the observation in Remark \ref{remark: duals}. For this reason we only discuss case~(\ref{sym6a}), and we notice that a conjugate of the corresponding $P$ appears as case 55 of \cite{MR2183057}: an element conjugating one subgroup to the other is the longest element of the Weyl group of the Levi subgroup of $G$ having all simple roots of $G$ except for $\alpha_1$.

We denote as usual by $\varpi_1,\ldots,\varpi_n$ the fundamental weights of $H$. Noticing that $T=T_H$, we have $\varpi_i=\omega_i$ for all $i\in\{1,\ldots,n-1 \}$, and $\varpi_{n}=\omega_n+\omega_{n+1}$. We also have $\frac{1}{2}\varpi_{n+1}=\omega_{n+1}$, whose double is a generator of the character group of $H$. This yields
\[
\Sigma(G/P) = \{\alpha_1+\alpha_2,\alpha_2+\alpha_3,\ldots,\alpha_{n-1}+\alpha_{n},\alpha_{n+1}\}
\]
and
\[
\Delta(G/P) = \{ D_1,\ldots, D_{n}, D_{n+1}^+, D_{n+1}^-\},
\]
with Cartan pairing
\begin{scriptsize}
\[
\begin{array}{c|cccccccccc}
& \alpha_1+\alpha_2 & \alpha_2+\alpha_3 & \alpha_3+\alpha_4 & \alpha_4+\alpha_5 & \ldots & \alpha_{n-3}+\alpha_{n-2} & \alpha_{n-2}+\alpha_{n-1} & \alpha_{n-1}+\alpha_{n} & \alpha_{n+1}\\
\hline
D_1 & 1 & -1 & 0 & 0 &\ldots & 0 & 0 & 0 & 0 \\
D_2 & 1 & 1 & -1 & 0 & \ldots & 0 & 0 & 0 & 0 \\
D_3 & -1 & 1 & 1 & -1 & \ldots & 0 & 0 & 0 & 0 \\
\vdots &  &  &  &  &  &  &  &  & \\
D_{n-2} & 0 & 0 & 0 & 0 & \ldots & 1 & 1 & -1 & 0 \\
D_{n-1} & 0 & 0 & 0 & 0 & \ldots & -1 & 1 & 1 & -1 \\
D_{n} & 0 & 0 & 0 & & \ldots & 0 & -1 & 1 & 0 \\
D_{n+1}^+ & 0 & 0 & 0 & 0 & \ldots & 0 & 0 & -1 & 1 \\
D_{n+1}^- & 0 & 0 & 0 & 0 & \ldots & 0 & -1 & 0 & 1 \\
\end{array}
\]
\end{scriptsize}

We can take $Q$ to be such that only $\alpha_1$ and $\alpha_{n+1}$ are not simple roots of $L_Q$. Then $Q$ is minimal containing $P$. The colors of $G/P$ mapping not dominantly to $G/Q$ are $D_1$ and either $D_{n+1}^+$ or $D_{n+1}^-$. We claim that $D_{n+1}^-$ is not mapped dominantly.

To prove the claim, we distinguish the two cases of $n$ even and $n$ odd.

If $n$ is even, we consider the natural projection $G/P\to G/H$. The homogeneous space $G/H$ appears as case 16 of \cite{MR3345829}, where the index ``$p$'' of loc.cit.\ is equal here to $n+1$. From loc.cit.\ we see that $G/H$ has a unique color $\widetilde D_{n+1}$ moved by $\alpha_{n+1}$, and it takes value $-1$ on the spherical root $\alpha_{n-3}+2\alpha_{n-2}+\alpha_{n-1}$ of $G/H$. We deduce that the inverse image of $\widetilde D_{n+1}$ in $G/P$ is $D_{n+1}^-$.

In order to apply this fact to the morphism $G/P\to G/Q$, we consider the parabolic subgroup $Q_0$ of $G$ containing $B^-$ and with Levi subgroup containing $T$ having all simple roots of $G$ except for $\alpha_{n+1}$. Then $Q_0$ contains $H$, and $G/Q_0$ has a unique color moved by $\alpha_{n+1}$. The diagram
\[
\begin{array}{ccc}
G/P & \to & G/H \\
\downarrow &  & \downarrow \\
G/Q & \to & G/Q_0
\end{array}
\]
commutes, where the maps are all induced by the inclusions of the respective subgroups. Denote by $E_{n+1}$ the color of $G/Q$ moved by $\alpha_{n+1}$.

Since the unique color of $G/Q_0$ has inverse image $\widetilde D_{n+1}$ in $G/H$ and $E_{n+1}$ in $G/Q$, it follows that the inverse images in $G/P$ of $\widetilde D_{n+1}$ and of $E_{n+1}$ coincide. In other words $D_{n+1}^-$ is mapped to $E_{n+1}$, whence our claim.

We discuss now the case $n$ odd. We use a similar argument, but with another subgroup $K\subseteq G$ instead of $H$. Namely, we consider $K= Q_1^u L_Q$, where $Q_1$ is the parabolic subgroup of $G$ containing $B^-$ and with Levi subgroup containing $T$ having all simple roots of $G$ except for $\alpha_{1}$. Notice that $Q_1\supset Q$, hence $Q_1^u\subseteq Q^u$ and $L_{Q_1}\supset L_Q$.

Our choice implies that $G/K$ is the parabolic induction of the homogeneous space $L_{Q_1}/L_{Q}$ by means of the parabolic subgroup $Q_1$ of $G$. For details on parabolic induction we refer to \cite[Section~3.4]{MR1896179}. Notice that $L_{Q_1}/L_{Q}$ is isomorphic to $\PSO(2n)/\PGL(n)$, and it is identified with the subset $Q_1/K$ of $G/K$.

The consequence is that $G/K$ has the same spherical roots as $L_{Q_1}/L_{Q}$, and the colors of $G/K$ that are moved by simple roots of $L_{Q_1}$ correspond bijectively (via the assignment $D\mapsto D\cap (Q_1/K)$) to the colors of $L_{Q_1}/L_{Q}$, in such a way to preserve the Cartan pairing and the property for a color to be moved by a simple root.

The homogeneous space $\PSO(2n)/\PGL(n)$ appears in \cite{MR3345829}, as case 16 if $n\geq 5$ (with $p$ of loc.cit.\ being equal to $n$), and as case 3 (with $p$ and $q$ loc.cit.\ being equal to resp.\ $1$ and $3$) if $n=3$. In both cases, taking into account its isomorphism with $L_{Q_1}/L_{Q}$, the simple root $\alpha_{n+1}$ moves only one color, which takes the value $1$ on the spherical root $\alpha_{n-1}+\alpha_n+\alpha_{n+1}$.

We conclude that $G/K$ has only one color $\widetilde D_{n+1}$ moved by $\alpha_{n+1}$, and it takes value $1$ on $\alpha_{n-1}+\alpha_n+\alpha_{n+1}$. This enables us to conclude the above claim, using the diagram
\[
\begin{array}{ccc}
G/P & \to & G/K \\
\downarrow &  & \downarrow \\
G/Q & \to & G/Q_1
\end{array}
\]
and by reasoning exactly as in the case $n$ even.

The claim, and equalities~(\ref{eq:sphroot}) of Lemma~\ref{lemma:sphroot}, yield
\[
\begin{array}{lclclclclcl}
\chi_{D_1} & = & \chi_{D_3} & = & \ldots & = & \chi_{D_{n-1}}  & = & -\omega_{1}^P &=& -\varpi_1,\\
& & \chi_{D_2} & = & \chi_{D_4} & = & \ldots & = & \chi_{D_{n-2}}  & = & 0,\\
 &  &  &  &  &  & \chi_{D_n} & = & \omega_{n+1}^P &=& \frac{1}{2}\varpi_{n+1},\\
 &  &  &  &  &  & \chi_{D_{n+1}^-} & = & -\omega_{n+1}^P &=&-\frac{1}{2}\varpi_{n+1},\\
 &  &  &  &  &  & \chi_{D_{n+1}^+} & = & \omega_{n+1}^P-\omega_{1}^P&=&\frac{1}{2}\varpi_{n+1}-\varpi_1
\end{array}
\]
if $n$ is even, and
\[
\begin{array}{lclclclclcl}
\chi_{D_1} & = & \chi_{D_3} & = & \ldots & = & \chi_{D_{n-2}}  & = & -\omega_{1}^P&=&-\varpi_1,\\
& &\chi_{D_2} & = & \chi_{D_4} & = & \ldots & = & \chi_{D_{n-1}}  & = & 0,\\
 &  &  &  &  &  & \chi_{D_n} & = & \omega_{n+1}^P-\omega_{1}^P&=&\frac{1}{2}\varpi_{n+1}-\varpi_{1},\\
 &  &  &  &  &  & \chi_{D_{n+1}^-} & = & -\omega_{n+1}^P &=&-\frac{1}{2}\varpi_{n+1},\\
 &  &  &  &  &  & \chi_{D_{n+1}^+} & = & \omega_{n+1}^P&=&\frac{1}{2}\varpi_{n+1}.
\end{array}
\]
if $n$ is odd.


\subsection{$\Sp(2p+2q)/(\Sp(2p)\times \Sp(2q))$ with $1\le p$, $1\le q$.}\label{sym7} Set $m=\min\{p,q\}$. Notice that in the paper \cite{MR3127988} the assumption $p\leq q$ is made, while here we drop it for convenience in the notations.

We define the subgroup $H$ as the stabilizer of the subspace generated by the first $p$ and the last $p$ vectors of the canonical basis of $\CC^{2p+2q}$. With this choice, the simple roots of $H$ are $\beta_i=\alpha_i$ for $i\in\{1,\ldots,p-1 \}$, $\beta_p=2\alpha_{p,p+q-1}+\alpha_{p+q}$, and $\beta'_j=\alpha_{p+j}$ for $j\in\{1,\ldots,q \}$. We also have $\varpi_i=\omega_i$ for all $i\in\{1,\ldots,p\}$, and $\varpi'_j+\varpi_p=\omega_{j+p}$ for all $j\in\{1,\ldots,q \}$.

It is useful to recall the spherical roots of $G/H$, taken from \cite[Symmetric case number 10]{MR3345829}:
\[
\begin{array}{lcl}
\Sigma(G/H) &=& \{ 	\alpha_{1}+2\alpha_2+\alpha_3, \alpha_3+2\alpha_4+\alpha_5, \ldots, \alpha_{2m-3}+2\alpha_{2m-2}+\alpha_{2m-1},\\
& & \alpha_{2m-1}+2\alpha_{2m}+ \ldots + 2\alpha_{n-1}+\alpha_{n} \}. 
\end{array}
\]

We have the following possibilities for the set $I$.

\begin{enumerate}
\item\label{sym7a} $I=\{\beta_1\}$,
\item\label{sym7c} $I=\{\beta_p\}$ assuming $p\geq 2$, and we have $p\leq 3$ or $q\leq 2$,
\item\label{sym7e} $I=\{\beta_{i}\}$ for all $i\in\{2,\ldots,p-1\}$, with $p\geq 3$ and $q=1$,
\item\label{sym7f} $I=\{\beta_{1},\beta_{2}\}$ with $p=2$ and $q\geq 2$,
\item\label{sym7h} $I=\{\beta_{i},\beta_{j}\}$ for any $i,j\in \{1,\ldots,p\}$ such that $i< j$, with $p\geq 2$ and $q=1$,
\item\label{sym7i} $I=\{\beta_{1},\beta'_{i}\}$ for any $i\in \{1,\ldots,q\}$, with $p=1$.
\end{enumerate}

It is useful to distinguish in our notations the different subgroups $P$ arising in each case. For this reason, we denote them by $P_{\ref{sym7a}},\ldots,P_{\ref{sym7i}}$. If required by the context, we will also add to our notation the indices $i,j$ appearing in the above list, writing e.g.\ $P_{\ref{sym7e}}(i)$ and $P_{\ref{sym7h}}(i,j)$ instead of simply $P_{\ref{sym7e}}$ and $P_{\ref{sym7h}}$.

Moreover, while discussing each case, we will introduce the usual parabolic subgroup $Q$, the Levi subgroup $L$ of $P$ containing $T$, and also another auxiliary subgroup $K$ of $G$. As for $P$, we denote them resp.\ by $Q_{\ref{sym7a}},\ldots,Q_{\ref{sym7i}}$, $L_{\ref{sym7a}},\ldots,L_{\ref{sym7i}}$, and $K_{\ref{sym7a}},\ldots,K_{\ref{sym7i}}$.

For $i\in\{1,\ldots,\ref{sym7i}\}$, let us choose a parabolic subgroup $Q_i$ minimal containing $B_-$ and $P_i$. We only specify which simple roots are not simple roots of $L_{Q_i}$:
\begin{itemize}
\item[-] $\alpha_1$ for $Q_{\ref{sym7a}}$,
\item[-] $\alpha_p$ for $Q_{\ref{sym7c}}$,
\item[-] $\alpha_i$ for $Q_{\ref{sym7e}}$,
\item[-] $\alpha_1$ and $\alpha_2$ for $Q_{\ref{sym7f}}$,
\item[-] $\alpha_i$ and $\alpha_j$ for $Q_{\ref{sym7h}}$,
\item[-] $\alpha_1$ and $\alpha_{i+1}$ for $Q_{\ref{sym7i}}$,
\end{itemize}
It is elementary to check that, for all $i$, the subgroup $L_i$ is a very reductive subgroup of $L_{Q_i}$. This implies that $Q_i$ is a minimal parabolic subgroup of $G$ containing $P_i$.

We discuss now the single cases.
\subsubsection{$I=\{\beta_{1}\}$.}\label{sym7aS}
We claim that the spherical roots and the colors of $G/P_{\ref{sym7a}}$ are as follows. If $p>q$ (notice that then $2q<p+q$), we have
\[
\begin{array}{lcl}
\Sigma(G/P_{\ref{sym7a}}) &=& \{ \alpha_1+\alpha_2,\alpha_2+\alpha_3,\ldots, \alpha_{2q-2}+\alpha_{2q-1}, \alpha_{2q-1}+\alpha_{2q},\\
& & \alpha_{2q}+2\alpha_{2q+1,p+q-1}+\alpha_{p+q} \}
\end{array}
\]
and
\[
\Delta(G/P_{\ref{sym7a}}) = \{ D_1,D_2,\ldots, D_{2q+1}\}
\]
with Cartan pairing $\rho(D_i)=\alpha_i^\vee|_{\Xi(G/P_{\ref{sym7a}})}$ for all $i$. 

If instead $p\leq q$ (and then $2p-1<p+q$), we have
\[
\Sigma(G/P_{\ref{sym7a}}) = \{ \alpha_1+\alpha_2,\alpha_2+\alpha_3,\ldots, \alpha_{2p-2}+\alpha_{2p-1}, \alpha_{2p-1}+2\alpha_{2p,p+q-1}+\alpha_{p+q} \}.
\]
and
\[
\Delta(G/P_{\ref{sym7a}}) = \{ D_1,D_2,\ldots, D_{2p}\}
\]
with Cartan pairing $\rho(D_i)=\alpha_i^\vee|_{\Xi(G/P_{\ref{sym7a}})}$ for all $i$. 

To prove the claim, let us denote resp.\ by $\Sigma$ and $\Delta$ the sets at the right hand side of the above equalities. By the classification of spherical homogeneous spaces, there exists a spherical homogeneous space $G/K_{\ref{sym7a}}$ such that $\Sigma=\Sigma(G/K_{\ref{sym7a}})$, $\Delta=\Delta(G/K_{\ref{sym7a}})$, $\ZZ\Sigma=\Xi(G/K_{\ref{sym7a}})$, and the Cartan pairing is as specified. By Corollary~\ref{cor:morphisms}, the subgroup $K_{\ref{sym7a}}$ is conjugated in $G$ to a subgroup of $H$, so we may assume $K_{\ref{sym7a}}\subseteq H$.

It is elementary to check the hypotheses of Proposition~\ref{prop:PinH} in this case, and conclude that $K_{\ref{sym7a}}$ is a parabolic subgroup of $H$. Also, one checks that it is a maximal proper subgroup of $H$, again by Corollary~\ref{cor:morphisms}. Hence $K_{\ref{sym7a}}$ must be conjugated in $G$ to $P_{\ref{sym7a}}$, $P_{\ref{sym7c}}$, or $P_{\ref{sym7e}}$ which are the maximal proper subgroups of $H$ in our list.

One checks that there is no parabolic subset of $\Delta$ that could correspond to a $G$-equivariant morphism $G/K_{\ref{sym7a}}\to G/Q_{\ref{sym7c}}$ nor to a $G$-equivariant morphism $G/K_{\ref{sym7a}}\to G/Q_{\ref{sym7e}}$. Hence $K_{\ref{sym7a}}$ is conjugated to $P_{\ref{sym7a}}$ in $G$.

There is only one color of $G/P_{\ref{sym7a}}$ mapping not dominantly to $G/Q_{\ref{sym7a}}$, namely $D_1$, yielding
\[
\chi_{D_1} = -\omega_1^{P_{\ref{sym7a}}} = -\varpi_1.
\]
The equalities~(\ref{eq:sphroot}) of Lemma~\ref{lemma:sphroot} yield
\[
\begin{array}{lclclclcl}
\chi_{D_2} & = & \chi_{D_4} & = & \ldots & = & \chi_{D_{2q}}  & = & 0,\\
\chi_{D_3} & = & \chi_{D_5} & = & \ldots & = & \chi_{D_{2q+1}}  & = & -\varpi_1.
\end{array}
\]
if $p>q$, and
\[
\begin{array}{lclclclcl}
\chi_{D_2} & = & \chi_{D_4} & = & \ldots & = & \chi_{D_{2p}}  & = & 0,\\
\chi_{D_3} & = & \chi_{D_5} & = & \ldots & = & \chi_{D_{2p-1}}  & = & -\varpi_1.
\end{array}
\]
if $p\leq q$.

\subsubsection{$I=\{\beta_p\}$ assuming $p\geq 2$, and we have $p\leq 3$ or $q\leq 2$}\label{sym7cS}
We claim that $G/P$ has spherical roots and colors as described below, where all possible values of $p$ and $q$ are discussed. In each case, the proof of this claim goes as before: the objects we indicate correspond in each case to a maximal proper parabolic subgroup of $H$, and there is no $G$-equivariant morphism from the homogeneous space to $G/Q_{\ref{sym7a}}$ or to $G/Q_{\ref{sym7e}}$. The claim follows.

For $p=3$ and $q\geq 3$:
\[
\Sigma(G/P_{\ref{sym7c}})= \{ \alpha_1, \ldots, \alpha_5, \sigma=\alpha_5+2\alpha_{6,q+2}+\alpha_{q+3} \}
\]
and
\[
\Delta(G/P_{\ref{sym7c}}) =  \{ D_1^+ = D_4^+, D_1^- = D_3^- = D_5^-, D_2^+ = D_5^+, D_2^-, D_3^+, D_4^-, D_6 \}
\]
with Cartan pairing
\begin{scriptsize}
\[
\begin{array}{c|ccccccc}
& \alpha_1 & \alpha_2 & \alpha_3 & \alpha_4 & \alpha_{5} & \sigma \\
\hline
D_1^+ & 1 & 0 & -1 & 1 & -1 & 0 \\
D_1^- & 1 & -1 & 1 & -1 & 1 & 0 \\
D_2^+ & -1 & 1 & -1 & 0 & 1 & 0 \\
D_2^- & 0 & 1 & 0 & 0 & -1 & 0 \\
D_3^+ & -1 & 0 & 1 & 0 & -1 & 0 \\
D_4^- & -1 & 0 & 0 & 1 & 0 & -1 \\
D_6   & 0 & 0 & 0 & 0 & -1 & 1
\end{array}
\]
\end{scriptsize}

There is only one color of $G/P_{\ref{sym7c}}$ mapping not dominantly to $G/Q_{\ref{sym7c}}$, namely $D_3^+$, yielding
\[
\chi_{D_3^+} = -\omega_3^{P_{\ref{sym7c}}} = -\varpi_3.
\]
The equalities~(\ref{eq:sphroot}) of Lemma~\ref{lemma:sphroot} yield
\[
\begin{array}{lclclcl}
\chi_{D_1^+} & = & \chi_{D_1^-} & = & \chi_{D_2^+}  & = & -\varpi_3,\\
\chi_{D_2^-} & = & \chi_{D_4^-} & = & \chi_{D_6}  & = & 0.
\end{array}
\]

For $p=3$ and $q=2$ we have
\[
\Sigma(G/P_{\ref{sym7c}})= S
\]
and
\[
\Delta(G/P_{\ref{sym7c}}) =  \{ D_1^+ = D_{4}^+, D_1^- = D_3^- = D_{5}^-, D_2^+ = D_{5}^+, D_2^-, D_3^+, D_4^-\}
\]
with Cartan pairing
\begin{scriptsize}
\[
\begin{array}{c|ccccc}
& \alpha_1 & \alpha_2 & \alpha_3 & \alpha_{4} & \alpha_{5} \\
\hline
D_1^+ & 1 & 0 & -1 & 1 & -1 \\
D_1^- & 1 & -1 & 1 & -1 & 1 \\
D_2^+ & -1 & 1 & -1 & 0 & 1 \\
D_2^- & 0 & 1 & 0 & 0 & -1 \\
D_3^+ & -1 & 0 & 1 &  0 & -1 \\
D_4^- & -1 & 0 & 0 & 1 & -1
\end{array}
\]
\end{scriptsize}
There is only one color of $G/P_{\ref{sym7c}}$ mapping not dominantly to $G/Q_{\ref{sym7c}}$, namely $D_3^+$. This yields
\[
\chi_{D_3^+} = -\omega_3^{P_{\ref{sym7c}}} = -\varpi_3.
\]
The equalities~(\ref{eq:sphroot}) of Lemma~\ref{lemma:sphroot} yield
\[
\begin{array}{lclclcl}
\chi_{D_1^+} & = & \chi_{D_1^-} &=& \chi_{D_2^+} &=& -\varpi_3,\\
      & &     \chi_{D_2^-} & = & \chi_{D_4^-} & = & 0.     
\end{array}
\]

For $p=2$ and $q\geq 2$ it holds
\[
\Sigma(G/P_{\ref{sym7c}})= \{ \alpha_1+\alpha_3, \alpha_2, \sigma=\alpha_3+2\alpha_{4,q+1}+\alpha_{q+2} \}
\]
and
\[
\Delta(G/P_{\ref{sym7c}}) =  \{ D_1 = D_3, D_2^+, D_2^-, D_4 \}
\]
with Cartan pairing
\[
\begin{array}{c|ccc}
& \alpha_1+\alpha_3 & \alpha_2 & \sigma \\
\hline
D_1 & 2 & -1 & 0  \\
D_2^+ & 0 & 1 & -1 \\
D_2^- & -2 & 1 & 0 \\
D_4 & -1 & 0 & 1 \\
\end{array}
\]

There is only one color of $G/P_{\ref{sym7c}}$ mapping not dominantly to $G/Q_{\ref{sym7c}}$, namely $D_2^-$. The only other possibility would be $D_2^+$, excluded by the fact that the other colors do not form a parabolic subset. This yields
\[
\chi_{D_2^-} = -\omega_2^{P_{\ref{sym7c}}} = -\varpi_2.
\]
The equalities~(\ref{eq:sphroot}) of Lemma~\ref{lemma:sphroot} yield
\[
\begin{array}{lclcl}
\chi_{D_2^+} & = & \chi_{D_4} & = & 0,\\
& & \chi_{D_1} & = & -\varpi_2.
\end{array}
\]

For $q=2$ and $p\geq 4$ we have
\[
\Sigma(G/P_{\ref{sym7c}})= \{ \alpha_1, \alpha_2, \alpha_3, \sigma=\alpha_{4,p}, \alpha_{p+1}, \alpha_{p+2}\}
\]
and
\[
\Delta(G/P_{\ref{sym7c}}) =  \{ D_1^+ = D_{q+1}^+, D_1^- = D_3^- = D_{p+2}^-, D_2^+ = D_{p+2}^+, D_2^-, D_3^+=D_{p+1}^-, D_4, E_{p}\}
\]
with Cartan pairing
\begin{scriptsize}
\[
\begin{array}{c|cccccc}
& \alpha_1 & \alpha_2 & \alpha_3 & \sigma & \alpha_{p+1} & \alpha_{p+2} \\
\hline
D_1^+ & 1 & 0 & -1 & 0 & 1 & -1 \\
D_1^- & 1 & -1 & 1 & 0 & -1 & 1 \\
D_2^+ & -1 & 1 & -1 & 0 & 0 & 1 \\
D_2^- & 0 & 1 & 0 & 0 & 0 & -1 \\
D_3^+ & -1 & 0 & 1 & -1 & 1 & -1 \\
D_4   & 0 & 0 & -1 & 1 & 0 & 0 \\
E_p   & 0 & 0 & 0 & 1 & -1 & 0
\end{array}
\]
\end{scriptsize}

There is only one color of $G/P_{\ref{sym7c}}$ mapping not dominantly to $G/Q_{\ref{sym7c}}$, namely $E_p$. If $p=4$ we would have the other possibility $D_4$, excluded by the fact that the other colors do not form a parabolic subset. This yields
\[
\chi_{E_p} = -\omega_p^{P_{\ref{sym7c}}}=-\varpi_p.
\]
The equalities~(\ref{eq:sphroot}) of Lemma~\ref{lemma:sphroot} yield
\[
\begin{array}{lclclclcl}
\chi_{D_1^+} & = & \chi_{D_1^-} &=& \chi_{D_2^+} &=& \chi_{D_3^+} &=& -\varpi_p,\\
      & &              & &          \chi_{D_2^-} & = & \chi_{D_4} & = & 0.     
\end{array}
\]

For $q=1$ and $p\geq 2$, the subgroup $P_{\ref{sym7c}}$ is found in~\cite{MR1424449} as case 4' of Table C. After loc.cit.\ we have
\[
\Sigma(G/P_{\ref{sym7c}}) = \{ \alpha_1+\alpha_{p+1}, \alpha_{2,p} \}
\]
and
\[
\Delta(G/P_{\ref{sym7c}}) = \{ D_1=D_{p+1}, D_2,E_p \}
\]
with Cartan pairing
\[
\begin{array}{c|cc}
& \alpha_1+\alpha_{p+1} & \alpha_{2,p} \\
\hline
D_1 & 2 & -1 \\
D_2 & -1 & 1 \\ 
E_p & -2 & 1 
\end{array}
\]
There is only one color of $G/P_{\ref{sym7c}}$ mapping not dominantly to $G/Q_{\ref{sym7c}}$, namely $E_p$. For $p=2$ we would have the other possibility $D_2$, excluded by the fact that the other colors do not form a parabolic subset. This yields
\[
\chi_{E_p} = -\omega_p^{P_{\ref{sym7c}}} = -\varpi_p.
\]
The equalities~(\ref{eq:sphroot}) of Lemma~\ref{lemma:sphroot} yield
\[
\begin{array}{lcl}
\chi_{D_1} & = & -\varpi_p,\\
\chi_{D_2} & = & 0.     
\end{array}
\]

\subsubsection{$I=\{\beta_{i}\}$ with $p\geq 3$, $q=1$, and $i\in\{2,\ldots,p-1\}$}\label{sym7eS}
Using the same argument of the previous two cases, one shows that
\[
\Sigma(G/P_{\ref{sym7e}}) = \{ \sigma=\alpha_1+\alpha_{i+1},\alpha_{2,i}, \gamma=\alpha_{i+1}+2\alpha_{i+2,p+1}+\alpha_{p+1} \}
\]
and
\[
\Delta(G/P_{\ref{sym7e}}) = \{ D_1=D_{i+1},D_2, E_i, D_{i+2} \}
\]
with Cartan pairing
\[
\begin{array}{c|ccc}
& \sigma & \alpha_{2,i} & \gamma \\
\hline
D_1 & 2 & -1 & 0 \\
D_2 & -1 & 1 & 0 \\
E_i & -1 & 1 & -1 \\
D_{i+2} & -1 & 0 & 1
\end{array}
\]

The only color of $G/P_{\ref{sym7e}}$ not mapped dominantly to $G/Q_{\ref{sym7e}}$ is $E_i$. For $i=2$ we would have the other possibility of $D_2$, excluded by the fact that the other colors do not form a parabolic subset. Therefore we have
\[
\chi_{E_i} = -\omega_i^{P_{\ref{sym7e}}}=-\varpi_i.
\]
The equalities~(\ref{eq:sphroot}) of Lemma~\ref{lemma:sphroot} yield
\[
\begin{array}{lclcl}
& & \chi_{D_2} & = & 0, \\
\chi_{D_{i+2}} & = & \chi_{D_1} & = & -\varpi_i.
\end{array}
\]

\subsubsection{$I=\{\beta_{1},\beta_{2}\}$ with $p=2$ and $q\geq 2$}\label{sym7fS}
We claim that
\[
\Sigma(G/P_{\ref{sym7f}}) = \{ \alpha_1,\alpha_2,\alpha_3,\sigma=\alpha_3+2\alpha_{4,q+1}+\alpha_{q+2} \}
\]
and
\[
\Delta(G/P_{\ref{sym7f}}) = \{ D_1^+=D_3^+, D_1^-, D_2^+, D_2^-, D_3^-, D_4 \}
\]
with Cartan pairing
\[
\begin{array}{c|cccc}
& \alpha_1 & \alpha_2 & \alpha_3 & \sigma \\
\hline
D_1^+ & 1 & -1 & 1 & 0  \\
D_1^- & 1 & 0 & -1 & 0 \\
D_2^+ & -1 & 1 & -1 & 0 \\
D_2^- & 0 & 1 & 0 & -1 \\
D_3^- & -1 & 0 & 1 & 0 \\
D_4  & 0 & 0 & -1 & 1 
\end{array}
\]

To show the claim, we define as usual $K_{\ref{sym7f}}$ to be a subgroup of $G$ having the above spherical roots and colors. With the same techniques used before, one shows that we can take $K_{\ref{sym7f}}$ inside $H$, in which case it is a non-maximal proper parabolic subgroup of $H$. Moreover, using the same arguments, one shows that $G/K_{\ref{sym7f}}$ admits $G$-equivariant morphisms to $G/P_{\ref{sym7a}}$ and to $G/P_{\ref{sym7c}}$, but not to $G/P_{\ref{sym7e}}(i)$ for any $i$. This proves the claim.

The colors $D_1^-$, $D_2^+$ are not mapped dominantly to $G/Q$ (the other possibility give by $D_1^-$, $D_2^-$ is excluded because the other colors do not form a parabolic subset).

This gives
\[
\begin{array}{lclcl}
\chi_{D_1^-} & = & -\omega_1^{P} &=&-\varpi_1,\\
\chi_{D_2^+} & = & -\omega_2^{P} &=&-\varpi_2.
\end{array}
\]
The equalities~(\ref{eq:sphroot}) of Lemma~\ref{lemma:sphroot} yield
\[
\begin{array}{lclcl}
& & \chi_{D_1^+} & = & -\varpi_2, \\
& & \chi_{D_3^-} & = & -\varpi_1, \\
\chi_{D_2^-} & = & \chi_{D_4} & = & 0.
\end{array}
\]

\subsubsection{$I=\{\beta_{i},\beta_{j}\}$ with $q=1$ and $1\le i<j\le p$}\label{sym7hS}
We first assume $1<i<j-1<p-1$, we will discuss separately the other possibilities.

We claim that, with this condition on $i$ and $j$, we have
\[
\Sigma(G/P_{\ref{sym7h}}) = \{ \alpha_1,\sigma = \alpha_{2,i}, \alpha_{i+1}, \gamma=\alpha_{i+2,j}, \alpha_{j+1},
\eta = \alpha_{j+1}+2\alpha_{j+2,p}+\alpha_{p+1} \}
\]
and
\[
\Delta(G/P_{\ref{sym7h}}) = \{ D_1^+=D_{i+1}^+, D_1^-=D_{j+1}^+, D_2, E_i, D_{i+1}^-=D_{j+1}^-, D_{i+2},E_{j}, D_{j+2} \}
\]
with Cartan pairing
\begin{scriptsize}
\[
\begin{array}{c|cccccc}
& \alpha_1 & \sigma &\alpha_{i+1} & \gamma & \alpha_{j+1} & \eta \\
\hline
D_1^+ & 1 & -1 & 1 & 0 & -1 & 0 \\
D_1^- & 1 & 0 & -1 & 0 & 1 & 0 \\
D_2 & -1 & 1 & 0 & 0 & 0 & 0 \\
E_i & 0 & 1 & -1 & 0 & 0 & 0 \\
D_{i+1}^- & -1 & 0 & 1 & -1 & 1 & 0 \\
D_{i+2}  & 0 & 0 & -1 & 1 & 0 & 0 \\
E_{j}  & 0 & 0 & 0 & 1 & -1 & -1 \\
D_{j+2}  & 0 & 0 & 0 & 0 & -1 & 1
\end{array}
\]
\end{scriptsize}

The claim is shown as usual: we denote by $K_{\ref{sym7h}}(i,j)$ a subgroup of $G$ corresponding to the above spherical roots and colors. The standard technique shows that $K_{\ref{sym7h}}(i,j)$ can be taken to be a non-maximal proper parabolic subgroup of $H$, and that it is conjugated in $G$ to a subgroup of $P_{\ref{sym7e}}(k)$ if and only if $k$ is equal to $i$ or $j$. This shows that $K_{\ref{sym7h}}(i,j)$ is conjugated to $P_{\ref{sym7h}}=P_{\ref{sym7h}}(i,j)$.

As usual one checks that $E_i$ and $E_j$ are the colors of $G/P_{\ref{sym7h}}$ not mapped dominantly to $G/Q_{\ref{sym7h}}$, yielding
\[
\begin{array}{lclcl}
\chi_{E_i} & = & -\omega_i^{P_{\ref{sym7h}}} &=& -\varpi_i,\\
\chi_{E_j} & = & -\omega_j^{P_{\ref{sym7h}}} &=& -\varpi_j.
\end{array}
\]
The equalities~(\ref{eq:sphroot}) of Lemma~\ref{lemma:sphroot} yield
\[
\begin{array}{lclcl}
\chi_{D_1^+} & = & \chi_{D_{i+2}} & = & -\varpi_i, \\
\chi_{D_1^-} & = & \chi_{D_{j+2}} & = & -\varpi_j, \\
& & \chi_{D_2} & = & 0, \\
& & \chi_{D_{i+1}^-} & = & -\varpi_i-\varpi_j.
\end{array}
\]
We continue with case~(\ref{sym7h}) and make now the assumption $1=i<j-1<p-1$. We claim that then
\[
\Sigma(G/P_{\ref{sym7h}}) = \{ \alpha_1, \alpha_{2}, \gamma=\alpha_{3,j}, \alpha_{j+1},
\eta = \alpha_{j+1}+2\alpha_{j+2,p}+\alpha_{p+1} \}
\]
and
\[
\Delta(G/P_{\ref{sym7h}}) = \{ D_1^+=D_{j+1}^+, D_1^-, D_2^+, D_{2}^-=D_{j+1}^-, D_{3},E_{j}, D_{j+2} \}
\]
with Cartan pairing
\begin{scriptsize}
\[
\begin{array}{c|ccccc}
& \alpha_1 & \alpha_{2} & \gamma & \alpha_{j+1} & \eta \\
\hline
D_1^+ & 1 & -1 & 0 & 1 & 0 \\
D_1^- & 1 & 0 & 0 & -1 & 0  \\
D_{2}^+ & 0 & 1 & 0 & -1 & 0 \\
D_{2}^- & -1 & 1 & -1 & 1 & 0 \\
D_{3}  & 0 & -1 & 1 & 0 & 0 \\
E_{j}  & 0 & 0 & 1 & -1 & -1 \\
D_{j+2}  & 0 & 0 & 0 & -1 & 1 
\end{array}
\]
\end{scriptsize}

The claim is shown exactly as above, where we have discussed the assumption $1<i<j-1<p-1$. Here $D_1^-$ and $E_j$ are the colors of $G/P_{\ref{sym7h}}$ not mapped dominantly to $G/Q_{\ref{sym7h}}$.

For $j=3$ there is another possibility for these two colors, namely $D_1^-$ and $D_3$. But $P_{\ref{sym7h}}(1,3)$ can be chosen in such a way that $P_{\ref{sym7h}}(1,3)\subseteq P_{\ref{sym7e}}(3)\subseteq Q_{\ref{sym7e}}(3)$, and with $P_{\ref{sym7h}}(1,3)$ containing $P_{\ref{sym7e}}(3)\cap B_-$. In this case we have the commutative diagram
\begin{equation}\label{eq:sp-diag}
\begin{array}{ccc}
G/P_{\ref{sym7h}}(1,3) & \to & G/P_{\ref{sym7e}}(3) \\
\downarrow & & \downarrow \\
G/Q_{\ref{sym7h}}(1,3) &\to & G/Q_{\ref{sym7e}}(3)
\end{array}
\end{equation}
where the morphisms are induced by the above inclusions of subgroups, and it shows that $E_3$ cannot be mapped dominantly to $G/Q_{\ref{sym7h}}(1,3)$ because it is not mapped dominantly to $G/Q_{\ref{sym7e}}(3)$.

So we have
\[
\begin{array}{lclcl}
\chi_{D_1^-} & = & -\omega_1^{P_{\ref{sym7h}}} &=& -\varpi_1,\\
\chi_{E_j} & = & -\omega_j^{P_{\ref{sym7h}}}&=& -\varpi_j.
\end{array}
\]
The equalities~(\ref{eq:sphroot}) of Lemma~\ref{lemma:sphroot} yield
\[
\begin{array}{lclcl}
\chi_{D_1^+} & = & \chi_{D_{j+2}} & = & -\varpi_j, \\
& &  \chi_{D_{3}} & = & -\varpi_1, \\
& & \chi_{D_2^+} & = & 0, \\
& & \chi_{D_{2}^-} & = & -\varpi_1-\varpi_j.
\end{array}
\]
We continue with case~(\ref{sym7h}), with the assumption $1<i=j-1<p-1$. We claim that then
\[
\Sigma(G/P_{\ref{sym7h}}) = \{ \alpha_1, \gamma=\alpha_{2,i}, \alpha_{j},\alpha_{j+1},
\eta = \alpha_{j+1}+2\alpha_{j+2,p}+\alpha_{p+1} \}
\]
and
\[
\Delta(G/P_{\ref{sym7h}}) = \{ D_1^+=D_{j+1}^+, D_1^-=D_j^-, D_2,E_{i}, D_{j}^+,D_{j+1}^-,D_{j+2} \}
\]
with Cartan pairing
\begin{scriptsize}
\[
\begin{array}{c|ccccc}
& \alpha_1 & \gamma & \alpha_{j} & \alpha_{j+1} & \eta \\
\hline
D_1^+ & 1 & 0 & -1 & 1 & 0 \\
D_1^- & 1 & -1 & 1 & -1 & 0  \\
D_{2} & -1 & 1 & 0 & 0 & 0 \\
E_i & 0 & 1 & -1 & 0 & 0 \\
D_{j}^+  & -1 & 0 & 1 & 0 & -1 \\
D_{j+1}^-  & -1 & 0 & 0 & 1 & 0 \\
D_{j+2}  & 0 & 0 & 0 & -1 & 1 
\end{array}
\]
\end{scriptsize}

The claim is shown as above. Here $E_i$ and $D_j^+$ are the colors of $G/P_{\ref{sym7h}}$ not mapped dominantly to $G/Q_{\ref{sym7h}}$.

For $i=2$ there is another possibility for these two colors, namely $D_2$ and $D_j^+$, excluded by the fact that the other colors do not form a parabolic subset.

So we have
\[
\begin{array}{lclcl}
\chi_{E_i} & = & -\omega_i^{P_{\ref{sym7h}}} &=& -\varpi_i,\\
\chi_{D_j^+} & = & -\omega_j^{P_{\ref{sym7h}}}&=& -\varpi_j.
\end{array}
\]
The equalities~(\ref{eq:sphroot}) of Lemma~\ref{lemma:sphroot} yield
\[
\begin{array}{lclcl}
\chi_{D_1^+} & = & \chi_{D_{j+2}} & = & -\varpi_j, \\
\chi_{D_{1}^-} & = & \chi_{D_{j+1}^-} & = & -\varpi_i, \\
& & \chi_{D_2^+} & = & 0.
\end{array}
\]
We continue with case~(\ref{sym7h}), with the assumption $1<i<j-1=p-1$. We claim that then
\[
\Sigma(G/P_{\ref{sym7h}}) = \{ \alpha_1, \sigma=\alpha_{2,i}, \alpha_{i+1},\gamma=\alpha_{i+2,j},\alpha_{p+1} \}
\]
and
\[
\Delta(G/P_{\ref{sym7h}}) = \{ D_1^+=D_{i+1}^+, D_1^-=D_{p+1}^+, D_2,E_{i}, D_{i+1}^-=D_{p+1}^-,D_{i+2},E_p \}
\]
with Cartan pairing
\begin{scriptsize}
\[
\begin{array}{c|ccccc}
& \alpha_1 & \sigma & \alpha_{i+1} & \gamma & \alpha_{p+1} \\
\hline
D_1^+ & 1 & -1 & 1 & 0 & -1 \\
D_1^- & 1 & 0 & -1 & 0 & 1 \\
D_{2} & -1 & 1 & 0 & 0 & 0 \\
E_i & 0 & 1 & -1 & 0 & 0\\
D_{i+1}^-  & -1 & 0 & 1 & -1 & 1 \\
D_{i+2}  & 0 & 0 & -1 & 1 & 0 \\
E_p  & 0 & 0 & 0 & 1 & -2 
\end{array}
\]
\end{scriptsize}

The claim is shown as above. With the same techniques as above, one shows that $E_i$ and $E_p$ are the colors of $G/P_{\ref{sym7h}}$ not mapped dominantly to $G/Q_{\ref{sym7h}}$. 

So we have
\[
\begin{array}{lclcl}
\chi_{E_i} & = & -\omega_i^{P_{\ref{sym7h}}} &=& -\varpi_i,\\
\chi_{E_p} & = & -\omega_p^{P_{\ref{sym7h}}} &=& -\varpi_p.
\end{array}
\]
The equalities~(\ref{eq:sphroot}) of Lemma~\ref{lemma:sphroot} yield
\[
\begin{array}{lclcl}
\chi_{D_1^+} & = & \chi_{D_{i+2}} & = & -\varpi_i, \\
& & \chi_{D_{1}^-} & = & -\varpi_p, \\
& & \chi_{D_2} & = & 0,\\
& & \chi_{D_{i+1}^-} & = & -\varpi_i-\varpi_p.
\end{array}
\]
We continue with case~(\ref{sym7h}), with the assumption $1=i=j-1<p-1$. We claim that then
\[
\Sigma(G/P) = \{ \alpha_1,\alpha_2,\alpha_3,\sigma=\alpha_3+2\alpha_{4,p+q-1}+\alpha_{p+q} \}
\]
and
\[
\Delta(G/P) = \{ D_1^+=D_3^+, D_1^-, D_2^+, D_2^-, D_3^-, D_4 \}
\]
with Cartan pairing
\[
\begin{array}{c|cccc}
& \alpha_1 & \alpha_2 & \alpha_3 & \sigma \\
\hline
D_1^+ & 1 & -1 & 1 & 0  \\
D_1^- & 1 & 0 & -1 & 0  \\
D_2^+ & 0 & 1 & -1 & 0  \\
D_2^- & -1 & 1 & 0 & -1 \\
D_3^- & -1 & 0 & 1 & 0  \\
D_4  & 0 & 0 & -1 & 1 
\end{array}
\]
The claim is shown as above. With the same techniques as above, one shows that $D_1^-$ and $D_2^-$ are the colors of $G/P_{\ref{sym7h}}$ not mapped dominantly to $G/Q_{\ref{sym7h}}$.

So we have
\[
\begin{array}{lclcl}
\chi_{D_1^-} & = & -\omega_1^{P_{\ref{sym7h}}} &=& -\varpi_1,\\
\chi_{D_2^-} & = & -\omega_2^{P_{\ref{sym7h}}} &=& -\varpi_2.
\end{array}
\]
The equalities~(\ref{eq:sphroot}) of Lemma~\ref{lemma:sphroot} yield
\[
\begin{array}{lclcl}
\chi_{D_1^+} & = & \chi_{D_4} & = & -\varpi_2, \\
& & \chi_{D_2^+} & = & 0,\\
& & \chi_{D_{3}^-} & = & -\varpi_1.
\end{array}
\]
We continue with case~(\ref{sym7h}), with the assumption $1<i=j-1=p-1$. We claim that then
\[
\Sigma(G/P) = \{ \alpha_1,\sigma=\alpha_{2,p-1},\alpha_p,\alpha_{p+1} \}
\]
and
\[
\Delta(G/P) = \{ D_1^+=D_p^+, D_1^-=D_{p+1}^-, D_2, E_{p-1}, D_p^-, D_{p+1}^+ \}
\]
with Cartan pairing
\[
\begin{array}{c|cccc}
& \alpha_1 & \sigma & \alpha_p & \alpha_{p+1} \\
\hline
D_1^+ & 1 & -1 & 1 & -1  \\
D_1^- & 1 & 0 & -1 & 1  \\
D_2 & -1 & 1 & 0 & 0  \\
E_{p-1} & 0 & 1 & -1 & 0 \\
D_p^- & -1 & 0 & 1 & -1  \\
D_{p+1}^+ & -1 & 0 & 0 & 1 
\end{array}
\]
The claim is shown as above. With the same techniques as above, one shows that $E_{p-1}$ and $D_p^-$ are the colors of $G/P_{\ref{sym7h}}$ not mapped dominantly to $G/Q_{\ref{sym7h}}$.

So we have
\[
\begin{array}{lclcl}
\chi_{E_{p-1}} & = & -\omega_{p-1}^{P_{\ref{sym7h}}} &=& -\varpi_{p-1},\\[5pt]
\chi_{D_p^-} & = & -\omega_p^{P_{\ref{sym7h}}}&=& -\varpi_{p}.
\end{array}
\]
The equalities~(\ref{eq:sphroot}) of Lemma~\ref{lemma:sphroot} yield
\[
\begin{array}{lclcl}
\chi_{D_1^+} & = & \chi_{D_{p+1}^+} & = & -\varpi_{p-1}, \\
& & \chi_{D_2} & = & 0,\\
& & \chi_{D_{1}^-} & = & -\varpi_p.
\end{array}
\]
We finish case~(\ref{sym7h}), discussing the last remaining assumption $1=i=j-1=p-1$. We claim that then
\[
\Sigma(G/P) = S
\]
and
\[
\Delta(G/P) = \{ D_1^+=D_3^+, D_1^-, D_2^+, D_2^-, D_3^- \}
\]
with Cartan pairing
\[
\begin{array}{c|ccc}
& \alpha_1 & \alpha_2 & \alpha_{3} \\
\hline
D_1^+ & 1 & -1 & 1  \\
D_1^- & 1 & 0 & -1  \\
D_2^+ & -1 & 1 & -1  \\
D_2^- & 0 & 1 & -1 \\
D_3^- & -1 & 0 & 1
\end{array}
\]

The claim is shown as above. With the same techniques as above, one shows that $D_1^-$ and $D_2^+$ are the colors of $G/P_{\ref{sym7h}}$ not mapped dominantly to $G/Q_{\ref{sym7h}}$.

So we have
\[
\begin{array}{lclcl}
\chi_{D_1^-} & = & -\omega_1^{P_{\ref{sym7h}}} &=& -\varpi_1,\\
\chi_{D_2^+} & = & -\omega_2^{P_{\ref{sym7h}}} &=& -\varpi_2.
\end{array}
\]
The equalities~(\ref{eq:sphroot}) of Lemma~\ref{lemma:sphroot} yield
\[
\begin{array}{lcl}
\chi_{D_1^+} & = & -\varpi_2, \\
\chi_{D_2^-} & = & 0,\\
\chi_{D_3^-} & = & -\varpi_1.
\end{array}
\]

\subsubsection{$I=\{\beta_{1},\beta'_{i}\}$ for any $i\in \{1,\ldots,q\}$, with $p=1$}\label{sym7iS}
Let us first suppose $1<i<q$, we will deal later with the other possibilities.
We claim that, with this condition on $i$, we have
\[
\Sigma(G/P_{\ref{sym7i}}) = \{ \alpha_1,\sigma = \alpha_{2,i}, \alpha_{i+1}, \gamma=\alpha_{i+1}+2\alpha_{i+2,q}+\alpha_{q+1} \}
\]
and
\[
\Delta(G/P_{\ref{sym7i}}) = \{ D_1^+=D_{i+1}^+, D_1^-, D_2, E_i, D_{i+1}^-, D_{i+2} \}
\]
with Cartan pairing
\[
\begin{array}{c|cccc}
& \alpha_1 & \sigma &\alpha_{i+1} & \gamma \\
\hline
D_1^+ & 1 & -1 & 1 & 0 \\
D_1^- & 1 & 0 & -1 & 0 \\
D_2   & -1 & 1 & 0 & 0 \\
E_i   & 0 & 1 & -1 & -1 \\
D_{i+1}^- & -1 & 0 & 1 & 0 \\
D_{i+2}   & 0 & 0 & -1 & 1 \\
\end{array}
\]

The claim is shown as for the previous cases. With the same techniques as above, one shows that $D_1^-$ and $D_{i+1}^-$ are the colors of $G/P_{\ref{sym7i}}$ not mapped dominantly to $G/Q_{\ref{sym7i}}$.

So we have
\[
\begin{array}{lclcl}
\chi_{D_1^-} & = & -\omega_1^{P_{\ref{sym7i}}} &=& -\varpi_1,\\
\chi_{D_{i+1}^-} & = & -\omega_{i+1}^{P_{\ref{sym7i}}} &=& -\varpi'_{i}-\varpi_1.
\end{array}
\]
The equalities~(\ref{eq:sphroot}) of Lemma~\ref{lemma:sphroot} yield
\[
\begin{array}{lclclcl}
\chi_{D_1^+} & = & \chi_{E_i} & = & \chi_{D_{i+2}} & = & -\varpi'_{i}, \\
& & & & \chi_{D_2} & = & 0.
\end{array}
\]
We continue with case~(\ref{sym7i}), with the assumption $1=i<q$. We claim that then
\[
\Sigma(G/P_{\ref{sym7i}}) = \{ \alpha_1, \alpha_{2}, \gamma=\alpha_{2}+2\alpha_{3,q}+\alpha_{q+1} \}
\]
and
\[
\Delta(G/P_{\ref{sym7i}}) = \{ D_1^+, D_1^-, D_2^+, D_2^- D_3 \}
\]
with Cartan pairing
\[
\begin{array}{c|ccc}
& \alpha_1 & \alpha_2 & \gamma \\
\hline
D_1^+ & 1 & -1 & 0 \\
D_1^- & 1 & 0 & -1 \\
D_2^+  & 0 & 1 & 0 \\
D_2^-  & -1 & 1 & 0 \\
D_3 & 0 & -1 & 1
\end{array}
\]
The claim is shown as for the previous cases. We can take $Q_{\ref{sym7i}}$ so that $\alpha_1$ and $\alpha_2$ are the only simple roots that are not simple roots of $L_{Q_{\ref{sym7i}}}$. With the same techniques as above, one shows that $D_1^+$ and $D_{2}^-$ are the colors of $G/P_{\ref{sym7i}}$ not mapped dominantly to $G/Q_{\ref{sym7i}}$. To exclude other possibilities, one uses the inclusions $P_{\ref{sym7i}}\subseteq P_{\ref{sym7a}}\subseteq Q_{\ref{sym7a}}$ and the corresponding diagram similar to~(\ref{eq:sp-diag}).

So we have
\[
\begin{array}{lclcl}
\chi_{D_1^+} & = & -\omega_1^{P_{\ref{sym7i}}} &=& -\varpi_1,\\
\chi_{D_{2}^-} & = & -\omega_{2}^{P_{\ref{sym7i}}}&=& -\varpi'_1-\varpi_1.
\end{array}
\]
The equalities~(\ref{eq:sphroot}) of Lemma~\ref{lemma:sphroot} yield
\[
\begin{array}{lclcl}
\chi_{D_1^-} & = & \chi_{D_3} & = & -\varpi'_1, \\
 & & \chi_{D_2^+} & = & 0.
\end{array}
\]
We continue with case~(\ref{sym7i}), with the assumption $1<i=q$. We claim that then
\[
\Sigma(G/P_{\ref{sym7i}}) = \{ \alpha_1, \sigma=\alpha_{2,q}, \alpha_{q+1} \}
\]
and
\[
\Delta(G/P_{\ref{sym7i}}) = \{ D_1^+=D_{p+1}^+, D_1^-, D_2, E_p, D_{q+1}^- \}
\]
with Cartan pairing
\[
\begin{array}{c|ccc}
& \alpha_1 & \sigma & \alpha_{q+1} \\
\hline
D_1^+ & 1 & -1 & 1 \\
D_1^- & 1 & 0 & -1 \\
D_2  & -1 & 1 & 0 \\
E_p  & 0 & 1 & -2 \\
D_{p+1}^- & -1 & 0 & 1
\end{array}
\]
The claim is shown as for the previous cases. With the same techniques as above, one shows that $D_1^-$ and $D_{q+1}^-$ are the colors of $G/P_{\ref{sym7i}}$ not mapped dominantly to $G/Q_{\ref{sym7i}}$.

So we have
\[
\begin{array}{lclcl}
\chi_{D_1^-} & = & -\omega_1^{P_{\ref{sym7i}}} &=& -\varpi_1,\\
\chi_{D_{q+1}^-} & = & -\omega_{q+1}^{P_{\ref{sym7i}}} &=& -\varpi'_q-\varpi_1.
\end{array}
\]
The equalities~(\ref{eq:sphroot}) of Lemma~\ref{lemma:sphroot} yield
\[
\begin{array}{lclcl}
\chi_{D_1^+} & = & \chi_{E_p} & = & -\varpi'_{q}, \\
 & & \chi_{D_2} & = & 0.
\end{array}
\]
We finish case~(\ref{sym7i}), discussing the assumption $1=i=q$. The subgroup $P_{\ref{sym7i}}$ of $\Sp(4)$ appears then as the first occurrence of case~8 of \cite[Table~C]{MR1424449}, which gives
\[
\Sigma(G/P_{\ref{sym7i}}) = S
\]
and
\[
\Delta(G/P_{\ref{sym7i}}) = \{ D_1^+, D_1^-, D_2^+, D_2^- \}
\]
with Cartan pairing
\[
\begin{array}{c|cc}
& \alpha_1 & \alpha_2 \\
\hline
D_1^+ & 1 & -1 \\
D_1^- & 1 & -1 \\
D_2^+ & 0 & 1 \\
D_2^- & -1 & 1
\end{array}
\]
Here there are two possible choices of colors of $G/P_{\ref{sym7i}}$ not mapped dominantly to $G/Q_{\ref{sym7i}}$, namely $\{D_1^+, D_{2}^-\}$ and $\{D_1^-,D_{2}^-\}$. They are exchanged by the non-trivial $G$-equivariant automorphism of $G/P_{\ref{sym7i}}$, so (as one can easily check directly) they produce the same computation for the extended weight monoid, except for the labelling of the generators by the colors. 

Let us choose $\{D_1^+, D_{2}^-\}$, which yields
\[
\begin{array}{lclcl}
\chi_{D_1^+} & = & -\omega_1^{P_{\ref{sym7i}}} &=& -\varpi_1,\\
\chi_{D_{2}^-} & = & -\omega_{2}^{P_{\ref{sym7i}}} &=& -\varpi'_1-\varpi_1.
\end{array}
\]
The equalities~(\ref{eq:sphroot}) of Lemma~\ref{lemma:sphroot} yield
\[
\begin{array}{lcl}
\chi_{D_1^-} & = & -\varpi'_1, \\
\chi_{D_2^+} & = & 0.
\end{array}
\]


\subsection{$\mathsf F_4/\Spin(9)$}
Let $G$ denote the connected semisimple group of Dynkin type $\mathsf{F}_{4}$. The subgroup $H\subseteq\mathsf{F}_{4}$ has maximal torus $T$ and simple roots $\beta_{1}=\eps_{1}-\eps_{2}=2\alpha_{4}+\alpha_{2}+2\alpha_{3}, \beta_{2}=\alpha_{1},\beta_{3}=\alpha_{2}$ and $\beta_{4}=\alpha_{3}$ and is isomorphic to $\Spin(9)$. With this choice, we have $\omega_1=\varpi_2$, $\omega_2=\varpi_1+\varpi_3$, $\omega_3=\varpi_1+\varpi_4$, $\omega_4=\varpi_1$.

Assuming $I$ is minimal, we have the following possibilities:
\begin{enumerate}
\item\label{sym8a} $I=\{\beta_{1},\beta_{2}\}$,
\item\label{sym8b} $I=\{\beta_{3}\}$,
\item\label{sym8c} $I=\{\beta_{4}\}$.
\end{enumerate}

Note that case (\ref{sym8a}) gives rise to the two subcases $I=\{\beta_{1}\}$ and $I=\{\beta_{2}\}$. Let us denote the corresponding parabolic subgroups by $P_{12},P_{1},P_{2},P_{3}$ and $P_{4}$. These subgroups turn out to be wonderful and their Luna diagrams can be found in \cite[\S3.2]{MR2769314}. Additionally, one can check that the spherical roots and the colors given below are correct as usual: applying Corollary~\ref{cor:morphisms} and Proposition~\ref{prop:PinH} to check that they correspond to minimal parabolic subgroups of $H$ that are spherical in $G$, and using dimensions to check that the given data correspond to the correct parabolic subgroup.

\subsubsection{$I=\{\beta_{1},\beta_{2}\}$}\label{sym8aS}
We have
$$\Sigma(G/P_{12})=\{\sigma_{1}=\alpha_{1}+\alpha_{2}, \sigma_{2}=\alpha_{2}+\alpha_{3},\sigma_{3}=\alpha_{3},\sigma_{4}=\alpha_{4}\}$$
and
$$\Delta(G/P_{12})=\{D_{1},D_{2},D_{3}^{\pm},D_{4}^{\pm}\}$$
with Cartan pairing
$$\begin{array}{r|rrrr}&\sigma_1&\sigma_2&\sigma_{3}&\sigma_{4}\\
\hline
D_{1}&1&-1&0&0\\
D_{2}&1&1&-1&0\\
D_{3}^{+}&0&0&1&-1\\
D_{3}^{-}&-2&0&1&0\\
D_{4}^{+}&0&0&-1&1\\
D_{4}^{-}&0&-1&0&1
\end{array}
$$
Let $Q_{12}$ be the parabolic subgroup so that $\alpha_{2}$ and $\alpha_{3}$ are the simple roots of $L_{Q_{12}}$. Then $Q_{12}$ is minimal for containing $P_{12}$. The pull-backs of the two colors of $G/Q_{12}$ to $G/P_{12}$ are $D_{1}$ and one of $\{D_{4}^{\pm}\}$. Since $\{D_{2},D_{3}^{\pm},D_{4}^{-}\}$ is parabolic and $\{D_{2},D_{3}^{\pm},D_{4}^{+}\}$ is not, it follows that $D_{4}^{+}$ is pulled back from $G/Q_{12}$. Hence
\[
\begin{array}{lclcl}
\chi_{D_1} & = & -\omega_1^{P_{12}} &=& -\varpi_2,\\
\chi_{D_{4}^-} & = & -\omega_{4}^{P_{12}} &=& -\varpi_1.
\end{array}
\]
We recover as usual the other weights:
\[
\begin{array}{lcl}
\chi_{D_{2}}&=&-\varpi_{2},\\
\chi_{D_{3}^{+}}&=&-\varpi_{1},\\
\chi_{D_{3}^{-}}&=&-\varpi_{2},\\
\chi_{D_{4}^{-}}&=&0.
\end{array}
\]

\textbf{Subcase:} $I=\{\beta_{1}\}$. Here we have $\Sigma(G/P_{1})=\{\sigma_{1}=\alpha_{1}+2\alpha_{2}+3\alpha_{3},\sigma_{2}=\alpha_{4}\}$ and $\Delta(G/P_{1})=\{D_{3},D_{4}^{\pm}\}$. The inverse images of the colors $D_{3},D_{4}^{+},D_{4}^{-}$ of $G/P_{1}$ under the natural projection $G/P_{12}\to G/P_{1}$ are the colors $D_{3}^{+},D_{4}^{-}$ and $D_{4}^{+}$ of $G/P_{12}$ respectively.

\textbf{Subcase:} $I=\{\beta_{2}\}$. Here we have $\Sigma(G/P_{1})=\{\sigma_{1}=\alpha_{1}+\alpha_{2}, \sigma_{2}=\alpha_{2}+\alpha_{3},\sigma_{3}=\alpha_{3}+\alpha_{4}\}$ and $\Delta(G/P_{1})=\{D_{1},D_{2},D_{3},D_{4}\}$. The inverse images of the colors $D_{1},D_{2},D_{3},D_{4}$ of $G/P_{2}$ under the natural projection $G/P_{12}\to G/P_{2}$ are the colors $D_{1},D_{2}, D_{3}^{-}$ and $D_{4}^{-}$ of $G/P_{12}$ respectively.

\subsubsection{$I=\{\beta_{3}\}$}\label{sym8bS}
We have
$$\Sigma(G/P_{3})=\{\sigma_{1}=\alpha_{1},\sigma_{2}=\alpha_{2}+\alpha_{3},\sigma_{3}=\alpha_{3}, \sigma_{4}=\alpha_{4}\}$$
and
$$\Delta(G/P_{3})=\{D_{1}^{+}=D_{4}^{+}, D_{1}^{-}=D_{3}^{-},D_{2}, D_{3}^{+}, D_{4}^{-}\}$$
with Cartan pairing
$$\begin{array}{r|rrrr}&\sigma_1&\sigma_2&\sigma_{3}&\sigma_{4}\\
\hline
D_{1}^{+}&1&-1&-1&1\\
D_{1}^{-}&1&0&1&-1\\
D_{2}&-1&1&-1&0\\
D_{3}^{+}&-1&0&1&0\\
D_{4}^{-}&-1&0&0&1
\end{array}
$$

A parabolic subset must contain an element of each of the sets $\{D_{1}^{+},D_{1}^{-}\},\{D_{2}\},\{D_{1}^{-},D_{3}\}$ and $\{D_{1}^{+},D_{4}^{-}\}$. Moreover, the colors that are moved by more than one root have to be in the parabolic subset, since colors on a (partial) flag variety are moved by at most one root. It follows that a parabolic subset  must contain one of the sets $\{D_{1}^{+},D_{1}^{-},D_{2}\}$ as a subset. Suppose that $a_{1}^{+}D_{1}^{+}+a_{1}^{-}D_{1}^{-}+a_{2}D_{2}$ is strictly positive on every spherical root. Then $a_{1}^{+}+a_{2}<a_{1}^{-}$ and $a_{1}^{-}<a_{1}^{+}$, which implies $a_{2}<0$, and we conclude that $\{D_{1}^{+},D_{1}^{-},D_{2}\}$ itself is not parabolic.

Suppose that $a_{1}^{+}D_{1}^{+}+a_{1}^{-}D_{1}^{-}+a_{2}D_{2}+a_{3}^{+}D_{3}^{+}$ is strictly positive on all spherical roots. Then $a_{1}^{+}+a_{1}^{-}-a_{2}-a_{3}^{+}>0$, $a_{1}^{+}<a_{2}$, $-a_{1}^{+}+a_{1}^{-}-a_{2}+a_{3}^{+}>0$ and $a_{1}^{+}>a_{1}^{-}$, from which $a_{1}^{-}>a_{2}$ and $a_{1}^{-}<a_{2}$, a contradiction.

Note that $4D_{1}^{+}+14D_{1}^{-}+6D_{2}+11D_{4}^{-}$ is strictly positive on each spherical root. We conclude that
$\{D_{1}^{+},D_{1}^{-},D_{2},D_{4}^{-}\}$ is the minimal parabolic subset of $\Delta(G/P_{3})$. 

First we choose another system of positive roots for $G$: $\alpha_{1}'=\eps_{2}-\eps_{3},\alpha_{2}'=\eps_{1}-\eps_{2},\alpha_{3}'=-\frac{1}{2}(\eps_{1}-\eps_{2}-\eps_{3}+\eps_{4}), \alpha_{4}'=\eps_{4}$. Note that  $\eps_{3}-\eps_{4}=2\alpha_{3}'+\alpha_{2}'$, which implies that this choice of positive roots is compatible with the standard choice of positive roots of $\laso_{9}$. Also note that $s_{\eps_{4}}\circ s_{\frac{1}{2}(-\eps_{1}+\eps_{2}+\eps_{3}-\eps_{4})}(\alpha_{i})=\alpha_{i}'$ for $i=1,2,3,4$.

It is clear that $P_{3}\subseteq Q_{\{\alpha_{1}',\alpha_{2}',\alpha_{4}'\}}$ and that $Q_{\{\alpha_{1}',\alpha_{2}',\alpha_{4}'\}}$ is minimal for this inclusion. Hence we found the conjugate $\widetilde{P}_{3}$ that is contained in $Q_{3}$. It follows that the $P$-weight of $D_{3}^{+}$ is $(-\omega_{3}')^{P}=-\varpi_{3}$.
Hence
\[
\chi_{D_{3}^{+}}=-\varpi_{3}
\]
and the equations
\begin{eqnarray*}
\chi_{D_{1}^{+}}+\chi_{D_{1}^{-}}-\chi_{D_{2}}+\chi_{D_{3}^{+}}-\chi_{D_{4}^{-}}=0\nonumber\\
-\chi_{D_{1}^{+}}+\chi_{D_{2}}=0\nonumber\\
-\chi_{D_{1}^{+}}+\chi_{D_{1}^{-}}-\chi_{D_{2}}+\chi_{D_{3}^{+}}=0\nonumber\\
\chi_{D_{1}^{+}}-\chi_{D_{1}^{-}}+\chi_{D_{4}^{-}}=0
\end{eqnarray*}
yield
\[
\begin{array}{lclclcl}
\chi_{D_{1}^{+}}&=&\chi_{D_{1}^{-}}&=&\chi_{D_{2}}&=&-\varpi_{3},\\
 & & & & \chi_{D_{4}^{-}} &=& 0.
\end{array}
\]

\subsubsection{$I=\{\beta_{4}\}$}\label{sym8cS}
We have
$$\Sigma(G/P_{4})=\{\sigma_{1}=\alpha_{1}+\alpha_{2}+\alpha_{3}, \sigma_{2}=\alpha_{2}+2\alpha_{3}+\alpha_{4},\sigma_{3}=\alpha_{4}\}$$
and
$$\Delta(G/P_{3})=\{D_{1},D_{3},D_{4}^{+},D_{4}^{-}\}$$
with Cartan pairing
$$\begin{array}{r|rrr}&\sigma_1&\sigma_2&\sigma_{3}\\
\hline
D_{1}&1&-1&0\\
D_{3}&0&1&-1\\
D_{4}^{+}&0&0&1\\
D_{4}^{-}&-1&0&1
\end{array}
$$

A parabolic subset must contain $D_{1},D_{3}$ and an element of $\{D_{4}^{+},D_{4}^{-}\}$. Suppose that $a_{1}D_{1}+a_{3}D_{3}+a_{4}^{-}D_{4}^{-}$ is strictly positive on the spherical roots. This implies $a_{1}>a_{4}^{-}$, $a_{1}<a_{3}$ and $a_{3}<a_{4}^{-}$, from which $a_{4}^{-}<a_{4}^{-}$, a contradiction.

Since $D_{1}+2D_{3}+3D_{4}^{+}$ is strictly positive on each spherical root, the minimal parabolic subset is given by $\{D_{1},D_{3},D_{4}^{+}\}$. Let $Q_{4}$ denote the parabolic subgroup such that only $\alpha_{4}$ is not a simple root of $L_{Q_{4}}$. Then $Q_{4}$ is minimal for containing a $G$-conjugate $\widetilde{P}_{4}$ of $P_{4}$. To find this conjugate we argue as follows.

First we look at a conjugate of $B_{4}$ which has the following system of positive roots: $\beta_{1}'=\eps_{3}+\eps_{4},\beta_{2}=\eps_{2}-\eps_{3},\beta_{3}'=\eps_{3}-\eps_{4},\beta_{4}'=\frac{1}{2}(\eps_{1}-\eps_{2}-\eps_{3}+\eps_{4})$. Note that this system is compatible with our choice of positive system of $\mathfrak{f}_{4}$. We claim that $P_{\{\beta_{1}',\beta_{2}',\beta_{3}'\}}\subseteq Q_{4}$ regularly. The Levi of $P$ has Lie algebra $\laso_{6}+\bbC$ which is embedded in the Lie algebra of the Levi of $Q$, which is $\laso_{7}+\bbC$. A weight of the nilpotent radical of $P_{\{\beta_{1}',\beta_{2}',\beta_{3}'\}}$ has the short root $\beta_{4}'$ in its support. Hence it has $\alpha_{4}$ in its support, which implies that it is a weight of the nilpotent radical of $Q$. Since $(\beta_{4}')^{\vee}=\alpha_{4}^{\vee}+\alpha_{3}^{\vee}$, we see that $\omega_{4}((\beta_{i}')^{\vee})=\delta_{4i}$. Hence $\omega_{4}^{P}=\varpi_{4}'$. It follows that 
\[
\chi_{D_{4}^{-}}=-\varpi_{4}.
\]

The equations
$$\chi_{D_{1}}-\chi_{D_{4}^{-}}=0,\quad -\chi_{D_{1}}+\chi_{D_{2}}=0,\quad -\chi_{D_{3}}+\chi_{D_{4}^{+}}+\chi_{D_{4}^{-}}=0$$
imply
\[
\begin{array}{lclcl}
\chi_{D_{1}}&=&\chi_{D_{3}}&=&-\varpi_{4},\\
& & \chi_{D_{4}^{+}}& =& 0.
\end{array}
\]


\subsection{$\mathsf E_6/(\Spin(10)\times \bbC^{\times})$}\label{sym8S} Let $G$ denote the connected and simply connected semisimple group of Dynkin type $\mathsf{E}_{6}$. Let $H\subseteq G$ be the Levi component with simple roots
$$\beta_{1}=\alpha_{6},\beta_{2}=\alpha_{5},\beta_{3}=\alpha_{4},\beta_{4}=\alpha_{3},\beta_{5}=\alpha_{2}.$$
Then $H$ is isogenous to $\bbC^{\times}\times\Spin(10)$ and we have
\[
\begin{array}{lcl}
\varpi_1&=& -\frac12\omega_1+\omega_6,\\[5pt]
\varpi_2&=& -\omega_1+\omega_5,\\[5pt]
\varpi_3&=& -\frac32\omega_1+\omega_4,\\[5pt]
\varpi_4&=& -\frac54\omega_1+\omega_3,\\[5pt]
\varpi_5&=& -\frac34\omega_1+\omega_2.\\[5pt]
\end{array}
\]
We denote by $\epsilon$ the extension of a generator of $\Chi(\CC^\times)$ to a character of the universal cover of $H$, chosen in such a way that $\omega_1=4\epsilon$, and we have
\[
\begin{array}{lcl}
\omega_1&=& 4\epsilon,\\
\omega_2&=&\varpi_5+3\epsilon,\\
\omega_3&=&\varpi_4+5\epsilon,\\
\omega_4&=&\varpi_3+6\epsilon,\\
\omega_5&=&\varpi_2+4\epsilon,\\
\omega_6&=&\varpi_1+2\epsilon.
\end{array}
\]

There is one possibility for $I$, namely $I=\{\beta_{1}\}$. Let $P\subseteq H$ denote the corresponding parabolic subgroup, i.e.~$P=P_{\{\beta_{2},\beta_{3},\beta_{4},\beta_{5}\}}$. Then $L_{P}$ is isogenous to $\bbC^{\times}\times\Spin(8)\times\bbC^{\times}$. Consider the Luna diagram for that appears as number 42 in \cite{MR2346359} and denote its corresponding subgroup with $P'\subseteq G$. We have
$$\Delta(G/P')=\{D_{1}^{+},D_{1}^{-},D_{3},D_{2},D_{5},D_{6}^{+},D_{6}^{-}\}$$
and%
\footnote{Here we denote by $\alpha_{ijk}$ the sum $\alpha_i+\alpha_j+\alpha_k$. Similar notations will be used throughout this case.}
$$\Sigma(G/P')=\{\alpha_{1},\alpha_{234},\alpha_{345},\alpha_{245},\alpha_{6}\}$$
with Cartan pairing
$$\begin{array}{r|rrrrr}&\sigma_1&\sigma_2&\sigma_{3}&\sigma_{4}&\sigma_{5}\\
\hline
D_{1}^{+}&1&0&-1&0&0\\
D_{1}^{-}&1&-1&0&0&0\\
D_{3}&-1&1&1&-1&0\\
D_{2}&0&1&-1&1&0\\
D_{5}&0&-1&1&1&-1\\
D_{6}^{+}&0&0&-1&0&1\\
D_{6}^{-}&0&0&0&-1&1
\end{array}
$$
Note that $\Sigma(G/H)=\{\alpha_{13456},2\alpha_{2}+2\alpha_{4}+\alpha_{3}+\alpha_{5}\}$, see e.g.~\cite[Case 18]{MR3345829}. The subset $\{D_{1}^{+},D_{3},D_{5},D_{6}^{+}\}\subseteq\Sigma(G/P')$ is distinguished and it gives a map to $G/H$.
It follows that $P'$ is conjugate to a subgroup of $H$. An application of Proposition \ref{prop:PinH} readily implies that $P'$ is conjugate to a parabolic subgroup of $H$. It follows from the classification that $P$ and $P'$ are conjugates.

Let $Q\subseteq G$ be the parabolic subgroup for which $L_{Q}$ has simple roots $\{\alpha_{1},\alpha_{6}\}$. Then $\{D_{1}^{+},D_{3},D_{2},D_{5},D_{6}^{-}\}\subseteq\Delta(G/P)$ is a distinguished subset that gives a map $G/P\to G/Q$.
This means that $D_{1}^{-}$ and $D_{6}^{+}$ come from $G/Q$. Hence
\[
\begin{array}{lclcl}
\chi_{D_{1}^{-}}&=&-\omega_{1}^{P}&=&-4\epsilon,\\
\chi_{D_{6}^{+}}&=&-\omega_{6}^{P}&=&-\varpi_1-2\epsilon.
\end{array}
\]

We find
\[
\begin{array}{lclcl}
\chi_{D_{1}^{+}}&=&\chi_{D_{5}}&=&-\varpi_{1}+2\epsilon,\\
& & \chi_{D_{2}}&=&0,\\
& & \chi_{D_{3}}&=&-\varpi_{1}-2\epsilon,\\
& & \chi_{D_{6}^{-}}&=&4\epsilon
\end{array}
\]


\subsection{$\mathsf E_6/\mathsf F_4$}\label{sym10S}
Let $G$ denote the connected semisimple group of Dynkin type $\mathsf{E}_{6}$. Let $H\subseteq G$ be the connected subgroup with simple roots$$\beta_{1}=\alpha_{2},\beta_{2}=\alpha_{4},\beta_{3}=\frac{1}{2}(\alpha_{3}+\alpha_{5}),\beta_{4}=\frac{1}{2}(\alpha_{1}+\alpha_{6}).$$
Then $H\subseteq G$ is a subgroup of Dynkin type $\mathsf{F}_{4}$, which is unique up to conjugacy \cite[Thm.~11.1]{dynkin}. Moreover, we have $\omega_{2}=\varpi_{1}$ which follows from the expression of the fundamental weights in terms of the simple roots.

There is one possibility for $I$, $I=\{\beta_{1}\}$. Let $P\subseteq H$ denote the corresponding parabolic subgroup, i.e.~$P=P_{\{\beta_{2},\beta_{3},\beta_{4}\}}$. Then $L_{P}=\Sp(6)\times\bbC^{\times}$ and $\lap^{u}=\bbC^{15}$. Let $Q\subseteq G$ be the parabolic subgroup for which $L_{Q}$ has simple roots $S\smallsetminus \{\alpha_{2}\}$. It is clear that $L_{P}\subseteq L_{Q}$ and $P^{u}\subseteq Q^{u}$.
These observations suggest that the Luna diagram of $G/P$ is given by \cite[Case 7]{MR2346359}. One can also verify this by applying as usual Corollary~\ref{cor:morphisms} and Proposition~\ref{prop:PinH}.
We have
$$\Sigma(G/P)=\{\sigma_{1}=\alpha_{1}+\alpha_{3}, \sigma_{2}=\alpha_{2}+\alpha_{4},\sigma_{3}=\alpha_{3}+\alpha_{4},\sigma_{4}=\alpha_{4}+\alpha_{5},\sigma_{5}=\alpha_{5}+\alpha_{6}\}$$
and
$$\Delta(G/P)=\{D_{1},D_{2},D_{3},D_{4},D_{5},D_{6}\}$$
with Cartan matrix 
$$\begin{array}{r|rrrrr}&\sigma_1&\sigma_2&\sigma_{3}&\sigma_{4}&\sigma_{5}\\
\hline
D_{1}&1&0&-1&0&0\\
D_{2}&0&1&-1&-1&0\\
D_{3}&1&-1&1&-1&0\\
D_{4}&-1&1&1&1&-1\\
D_{5}&0&-1&-1&1&1\\
D_{6}&0&0&0&-1&1
\end{array}
$$
Our discussion above implies that
\[
\chi_{D_{2}}=-\omega_{2}^{P}=-\varpi_1.
\]
We find
\[
\begin{array}{lclclclcl}
\chi_{D_{2}}&=&\chi_{D_{3}}&=&\chi_{D_{4}}&=&\chi_{D_{5}}&=&-\varpi_{1},\\
& & & & \chi_{D_{1}}&=&\chi_{D_{6}}&=&0.
\end{array}
\]

\subsection{$\SL(n)\times\SL(n)/\diag(\SL(n))$}\label{sym11S} There are two possibilities: $I=\{\beta_{1}\}$ or $I=\{\beta_{n-1}\}$. The generators for the case $I=\{\beta_{1}\}$ have been calculated in \cite[Lemma 5.2]{MR4053617}. The second case is related to the first by Remark \ref{remark:subgroups}.


\section{Non-symmetric cases}\label{s:sph}\label{s:nonsym}

\subsection{$\SL(p+q+2)/(\SL(p+1)\times \SL(q+1))$ with $p>q\geq 0$}\label{sph1S} There are four different cases, listed below. In each case one can derive the generators immediately from from the indicated sub-cases of \ref{sym2} by Remark~\ref{remark:H'}.
\begin{enumerate}
\item $2\le p<q$, $I=\{\beta_{1}\}$ or $I=\{\beta_{p}\}$, use \ref{sym2-2S}, 
\item $p=1<q$, $I=\{\beta\}$, use \ref{sym2-1S},
\item $p=1,q\ge4$, $I=\{\beta_{j}'\}$, $2\leq j \leq q-2$, use \ref{sym2-6S},
\item $p=0,q\ge1$, $I=S_{H}\smallsetminus\{\beta_{j}\}$, $1\leq j\leq q$, use \ref{sym2-7S}.
\end{enumerate}

\subsection{$\SO(4n+2)/\SL(2n+1)$ with $n\geq 1$}\label{sph2S}
We have the following possibilities for $I$: $I=\{\beta_{1}\}$ or $I=\{\beta_{2n}\}$. This case derives immediately from case~(\ref{sym6S}), by Remark~\ref{remark:H'}.

\subsection{$\Spin(9)/\Spin(7)$}\label{sph3S}
Let $G=\Spin(9)$ and let $H$ be the connected semisimple subgroup whose simple roots are $\beta_{1}=\eps_{3}+\eps_{4},\beta_{2}=\alpha_{2}$ and $\beta_{3}=\frac{1}{2}(\alpha_{1}+\alpha_{3})$. 
Then $H$ is isomorphic to $\Spin(7)$ and the isomorphism is given by the representation $\varpi_{3}+1$.

There is only one possibility for $I$, namely $I=\{\beta_{1}\}$. Using as usual Corollary~\ref{cor:morphisms} and Proposition~\ref{prop:PinH}, one checks that
$$\Sigma(G/P)=\{\sigma_{1}=\alpha_{1}+\alpha_{2},\sigma_{2}=\alpha_{2}+\alpha_{3},\sigma_{3}=\alpha_{3}+\alpha_{4},\sigma_{4}=\alpha_{4}\}$$
and
$$\Delta(G/P)=\{D_{1},D_{2},D_{3},D_{4}^{+},D_{4}^{-}\}$$
with Cartan pairing
$$\begin{array}{r|rrrr}&\sigma_1&\sigma_2&\sigma_{3}&\sigma_{4}\\
\hline
D_{1}&1&-1&0&0\\
D_{2}&1&1&-1&0\\
D_{3}&-1&1&1&-1\\
D_{4}^{+}&0&0&0&1\\
D_{4}^{-}&0&-2&0&1
\end{array}
$$

Consider the parabolic subgroup $Q=Q_{\{\alpha_{1},\alpha_{2},\alpha_{3}\}}$ of $G$. We have $P\subseteq Q$ and $Q$ is minimal for this inclusion. Indeed, the corresponding set of colors corresponding to the map $G/P\to G/Q$ is either $\{D_{1},D_{2},D_{3},D_{4}^{+}\}$ or $\{D_{1},D_{2},D_{3},D_{4}^{-}\}$. We note that the latter is not a parabolic subset of $\Delta(G/P)$, that the former is, and in fact that it is minimal.

Using that $\omega_{4}=\beta_{1}+\beta_{2}+\beta_{3}=\varpi_{1}$, we get
\[
\chi_{D_{4}^{-}}=-\omega^{P}_{4}=-\varpi_1.
\]
and we obtain
\[
\begin{array}{lclcl}
\chi_{D_{1}}&=&\chi_{D_{4}^{+}}&=&0,\\
\chi_{D_{2}}&=&\chi_{D_{3}}&=&-\varpi_{1}.
\end{array}
\]


\subsection{$\Spin(7)/\mathsf G_2$}
Let $G=\Spin(7)$ and let $H\subseteq G$ be the connected subgroup with simple roots $\beta_{1}=\frac{1}{3}(\alpha_{1}+2\alpha_{3})$ and $\beta_{2}=\alpha_{2}$. Then $H$ is simply connected of Dynkin type $\mathsf{G}_{2}$. In fact, $H$ has an irreducible $7$-dimensional representation $H\to\SO(7)$ of highest weight $\varpi_{1}$ and its lift $H\to\Spin(7)$ is the embedding we described above.

We have the following possibilities for $I$:
\begin{enumerate}
\item\label{sph4a} $I=\{\beta_{1}\}$,
\item\label{sph4b} $I=\{\beta_{2}\}$.
\end{enumerate}

The Luna diagrams for the respective homogeneous spaces, denoted here by $G/P_{1}$ and $G/P_{2}$, appear in \cite[\S3.4]{MR2769314}. We will prove in a moment which diagram corresponds to which case, and we will use the observation that $Q_{\{\alpha_1,\alpha_3 \}}$ has Levi subgroup of semisimple type $\mathsf A_1$, and the unipotent radical $Q^u_{\{\alpha_1,\alpha_3 \}}$ does not contain the irreducible $\SL(2)$-module of dimension $4$.

\subsubsection{$I=\{\beta_{1}\}$}\label{sph4aS}
We have 
$$\Sigma(G/P_{1})=\{\sigma_{1}=\alpha_{1}+\alpha_{2},\sigma_{2}=\alpha_{2}+\alpha_{3},\sigma_{3}=\alpha_{3}\}$$
and 
$$\Delta(G/P_{1})=\{D_{1},D_{2},D_{3}^{+},D_{3}^{-}\}$$
with Cartan pairing
$$\begin{array}{r|rrr}&\sigma_1&\sigma_2&\sigma_{3}\\
\hline
D_{1}&1&-1&0\\
D_{2}&1&1&-1\\
D_{3}^{+}&0&0&1\\
D_{3}^{-}&-2&0&1
\end{array}
$$

We observe that $\{D_{2},D_{3}^{+}\}$ is the minimal parabolic subset, as there are no other parabolic subsets with two or less elements. This implies that the colors $D_{1},D_{3}^{-}$ are pull-backs of colors on $G/Q$ via $G/P_1\to G/Q$ which is given by the inclusion  $P_{1}=P_{\{\beta_{1}\}}\subseteq Q_{\{\alpha_1,\alpha_3\}}=Q$.

This also shows that these spherical roots and colors correspond to $P_1$ and not $P_2$. Indeed, the unipotent radical of $P_2$ contains the $4$-dimensional irreducible $\SL(2)$-module, whereas $Q_{\{\alpha_1,\alpha_2\}}$ doesn't.

We deduce
\[
\begin{array}{lclcl}
\chi_{D_{1}}&=&-\omega_{1}^{P}&=&-\varpi_{1},\\[5pt]
\chi_{D_{3}^{-}}&=&-\omega_{3}^{P}&=&-\varpi_{1},
\end{array}
\]
and this yields
\[
\begin{array}{lcl}
\chi_{D_{2}}&=&-\varpi_{1},\\
\chi_{D_{3}^{+}}&=&0.
\end{array}
\]

\subsubsection{$I=\{\beta_{2}\}$}\label{sph4bS}
We have 
$$\Sigma(G/P_{2})=\{\sigma_{1}=\alpha_{1},\sigma_{2}=\alpha_{2},\sigma_{3}=\alpha_{3}\}$$
and
$$\Delta(G/P_{2})=\{D_{1}^{+}=D_{2}^{+},D_{1}^{-}=D_{3}^{-},D_{2}^{-},D_{3}^{+}\}$$
with Cartan pairing
$$\begin{array}{r|rrr}&\sigma_1&\sigma_2&\sigma_{3}\\
\hline
D_{1}^{+}&1&1&-1\\
D_{1}^{-}&1&-2&1\\
D_{2}^{-}&-2&1&0\\
D_{3}^{+}&-1&0&1
\end{array}
$$

Since $D_{1}^{+}$ and $D_{1}^{-}$ are moved by two roots, they must be in the parabolic subset. Suppose that $\{D_{1}^{+},D_{1}^{-},D_{2}^{-}\}$ is parabolic. Then there exist non-negative coefficients $a_{1}^{+},a_{1}^{-},a_{2}^{-}$ such that $a_{1}^{+}D_{1}^{+}+a_{1}^{-}D_{1}^{-}+a_{2}^{-}D_{2}^{-}$ is strictly positive on each spherical root. In particular, $a_{1}^{-}>a_{1}^{+}$. The first two spherical roots yield $a_{1}^{+}+a_{1}^{-}-2a_{2}^{-}>0$ and $a_{1}^{+}-2a_{1}^{-}+a_{2}^{-}>0$, which implies $a_{1}^{+}<a_{1}^{-}$, a contradiction.

Since $3D_{1}^{+}+D_{1}^{-}+3D_{3}^{+}$ is strictly positive on each spherical root, we see that $\{D_{1}^{+},D_{1}^{-},D_{3}^{+}\}$ is a minimal parabolic subset of colors. This means that $Q=Q_{\{\alpha_2\}}$ contains $P_2$ up to conjugation, and that $D_{2}^{-}$ comes from $G/Q$.

Hence
\[
\begin{array}{lclcl}
\chi_{D_{2}^{-}}&=&-\omega_{2}^{P}&=&-\varpi_{2},
\end{array}
\]
and we obtain
\[
\begin{array}{lclcl}
\chi_{D_{1}^{+}}&=&\chi_{D_{1}^{-}}&=&-\varpi_{2},\\
& & \chi_{D_{3}^{+}}&=&0.
\end{array}
\]


\subsection{$\mathsf G_2/\SL(3)$}\label{sph5S}
Let $G$ be the connected group of Dynkin type $\mathsf{G}_{2}$ and let $H\subseteq G$ be the connected subgroup whose roots are the long roots of $G$. Set $\beta_{2}=\alpha_{2}$ and $\beta_{1}=s_{\omega_{2}}(\beta_{1})$. Then $H=\SL(3)$. There are two possibilities for $I$,
\begin{enumerate}
\item\label{sph8a} $I=\{\beta_{1}\}$,
\item\label{sph8b} $I=\{\beta_{2}\}$.
\end{enumerate}
The corresponding subgroups $P_{1}$ and $P_{2}$ are conjugated by a simple reflection of $W_{G}$. The Luna diagram of $G/P_{1}$ occurs as case G3 in \cite{MR1424449}. Denoting by $\alpha_1$ the short simple root, and by $\alpha_2$ the long one, one checks with the usual argument that $$\Sigma(G/P_{1})=\{\sigma_{1}=\alpha_{1},\sigma_{2}=\alpha_{1}+\alpha_{2}\}$$
and
$$\Delta(G/P_{1})=\{D_{1}^{+}, D_{1}^{-},D_{2}\}$$
with Cartan pairing
$$\begin{array}{r|rr}&\sigma_1&\sigma_2\\
\hline
D_{1}^{+}&1&0\\
D_{1}^{-}&1&-1\\
D_{2}&-1&1
\end{array}
$$
The minimal parabolic subset is $\{D_{1}^{+},D_{2}\}$ which implies that $D_{1}^{-}$ is a pull-back from the color of $G/Q_{\{\alpha_{2}\}}$, where $P_{1}=P_{\{\beta_{2}\}}\subseteq Q_{\{\alpha_{2}\}}$ is regular. It follows that
\[
\chi_{D_{1}^{-}}=-\omega_{1}^{P_{1}}=-\varpi_{1}
\]
and the equations yield
\[
\begin{array}{lcl}
\chi_{D_{2}}&=&-\varpi_{1}, \\
\chi_{D_{1}^{+}}&=&0.
\end{array}
\]


\subsection{$(\Sp(2m)\times \Sp(2n))/(\Sp(2m-2)\times \SL(2)\times \Sp(2n-2))$ with $m,n\geq1$ and $\min\{m,n\}>1$ } We have the following possibilities for $I$:
\begin{enumerate}
\item\label{sph6a} $I=\{\beta_i\}$ with $m\geq 2$ and $i\in\{1,\ldots,m-1 \}$,
\item\label{sph6b} $I=\{\beta_1'\}$.
\end{enumerate}
There is also a third possibility $I=\{\beta''_j \}$ with $n>1$ and $j\in\{1,\ldots,n-1 \}$, but it can be skipped here because it falls under the analysis of the first case by just swapping $m$ with $n$.

We recall the following data for $G/H$, taken from~\cite[Case 42]{MR3345829}. If $n,m>1$ we have
\[
\Sigma(G/H)=\{ \alpha_1+\alpha'_1, \alpha_1+2\alpha_{2,m-1}+\alpha_m, \alpha'_1+2\alpha'_{2,n-1}+\alpha'_n \}
\]
and
\[
\Delta(G/H)=\{ D_1=D_{1'}, D_2, D_{2'}\}
\]
with Cartan pairing given by the restriction to $\Xi(G/H)$ of respectively $\alpha_1^\vee$, $\alpha_2^\vee$, $(\alpha'_2)^\vee$. If $m=1$ or $n=1$, then the second (resp.\ third) spherical root and the second (resp.\ third) color does not appear.

\subsubsection{$I=\{\beta_i\}$ with $m>1$ and $i\in\{1,\ldots,m-1 \}$}\label{sph6aS}
Let $G$ act naturally on $\CC^{2m}\oplus\CC^{2n}$, and let $V\cong\CC^2\oplus\CC^2$ be the subspace generated by the $m$-th and the $(m+1)$-th elements of the canonical basis of $\CC^{2m}$, and by the $n$-th and the $(n+1)$-th elements of the canonical basis of $\CC^{2n}$. We choose $H$ to be inside the stabilizer of $V$ in $G$, so that $H$ contains the diagonal subgroup of $\SL(2)\times\SL(2)$ (embedded in $\GL(V)$ in the obvious way).

We recall that, with this choice (and up to restricting weights from $T_G$ to $T_H$) we have $\omega_j=\varpi_j$ for all $j\in\{1,\ldots,m-1\}$, and $\omega_m=\varpi_{m-1}+\varpi'_1$.

As usual, we choose $P$ so that it contains the intersection $H\cap B_-$. We use notations similar to section~(\ref{sym7}): the parabolic subgroup $P$ will be denoted also by $P_{\ref{sph4a}}$ or $P_{\ref{sph4a}}(i)$, whereas that of case~(\ref{sph4b}) will be denoted also by $P_{\ref{sph4b}}$. With similar notation we consider the subgroup $Q_{\ref{sph4a}}(i)$, which is such that its Levi subgroup has all simple roots except for $\alpha_i$.

Let us discuss how one checks that $G/P_{\ref{sph4a}}(i)$ has the spherical roots and colors indicated below. The usual procedure assures that the given data correspond to parabolic subgroups of $H$. The inclusion of $P_{\ref{sph4a}}(i)$ in $Q_{\ref{sph4a}}(i)$ is then enough to conclude that the data we give for $P_{\ref{sph4a}}(i)$ does not correspond to $P_{\ref{sph4a}}(j)$ for any $j\neq i$.

It remains to exclude the possibility that the data given here corresponds to $P_{\ref{sph4b}}$. This is done by noticing that $P_{\ref{sph4b}}$ is contained in a proper parabolic subgroup of $G$ containing the first factor $\Sp(2m)$ of $G$, something that is not compatible with the data given here.

We analyze now all possible values of $i$, $m$, and $n$.

Suppose $i=m-1$, $m> 2$ and $n>1$. Then
\[
\Sigma(G/P_{\ref{sph4a}}(m-1))=\{\alpha_1, \sigma_1=\alpha_{2,m-1}, \alpha_m, \alpha_1', \sigma_2=\alpha'_{1}+2\alpha_{2,n-1}+\alpha_n\}
\]
and
\[
\Delta(G/P_{\ref{sph4a}}(m-1))=\{D_{1}^{+}=D_m^+, D_1^-=D_{1'}^-, D_{2},E_{m-1}, D_m^-=D_{1'}^+, D_{2'}\}
\]
with Cartan pairing
\[
\begin{array}{r|rrrrr|r}
&\alpha_1 & \sigma_1 & \alpha_m &\alpha_1' &\sigma_2\\
\hline
D_{1}^{+}& 1 & -1 & 1  & -1 & 0 \\
D_{1}^{-}& 1 & 0  & -1 & 1 &  0 \\
D_{2}    & -1 & 1 & 0 & 0 & 0 \\
E_{m-1}  & 0 & 1 & -2 & 0 & 0 \\
D_m^-    & -1 & 0 & 1 & 1 & 0 \\
D_{2'}   & 0 & 0 & 0 & -1 & 1 
\end{array}
\]
There is only one color not mapped dominantly to $G/Q_{\ref{sph4a}}(m-1)$, namely $E_{m-1}$. Indeed, if $m>3$, it's the only color moved by $\alpha_{m-1}$. If $m=3$ the other possibility given by $D_2$ does not correspond to a parabolic subset of colors. Thus
\[
\chi_{E_{m-1}} = -\omega_{m-1}^{P_{\ref{sph4a}}(m-1)} = -\varpi_{m-1},
\]
and this yields
\[
\begin{array}{lclclcl}
& & \chi_{D_{1}^+}&=&\chi_{D_{m}^-}&=&-\varpi_{m-1}, \\
\chi_{D_1^-}&=&\chi_{D_2}&=&\chi_{D_{2'}}&=&0.
\end{array}
\]

Suppose $i=m-1$, $m>2$, and $n=1$. Then
\[
\Sigma(G/P_{\ref{sph4a}}(m-1))=\{\alpha_1, \sigma=\alpha_{2,m-1}, \alpha_m, \alpha_1'\}
\]
and
\[
\Delta(G/P_{\ref{sph4a}}(m-1))=\{D_{1}^{+}=D_m^+, D_1^-=D_{1'}^+, D_{2},E_{m-1}, D_m^-=D_{1'}^-\}
\]
with Cartan pairing
\[
\begin{array}{r|rrrr}
&\alpha_1 & \sigma & \alpha_m &\alpha_1'\\
\hline
D_{1}^{+}& 1 & -1 & 1  & -1 \\
D_{1}^{-}& 1 & 0  & -1 & 1\\
D_{2}    & -1 & 1 & 0 & 0  \\
E_{m-1}  & 0 & 1 & -2 & 0  \\
D_m^-    & -1 & 0 & 1 & 1 
\end{array}
\]
We proceed exactly as in the previous case, obtaining
\[
\chi_{E_{m-1}} = -\omega_{m-1}^{P_{\ref{sph4a}}(m-1)} = -\varpi_{m-1},
\]
and
\[
\begin{array}{lclclcl}
& & \chi_{D_{1}^+}&=&\chi_{D_{m}^-}&=&-\varpi_{m-1}, \\
& & \chi_{D_1^-}&=&\chi_{D_2}&=&0.
\end{array}
\]

Suppose $i=1$, $m=2$ and $n\geq 2$. Then
\[
\Sigma(G/P_{\ref{sph4a}}(1))=\{\alpha_1,\alpha_2, \alpha_1', \sigma=\alpha'_{1}+2\alpha'_{2,n-1}+\alpha'_n\}
\]
and
\[
\Delta(G/P_{\ref{sph4a}}(1))=\{D_{1}^{+}, D_1^-=D_{1'}^-, D_{2}^+, D_2^-=D_{1'}^+, D_{2'}\}
\]
with Cartan pairing
\[
\begin{array}{r|rrrr}
&\alpha_1 & \alpha_2 &\alpha_1' &\sigma\\
\hline
D_{1}^{+}& 1 & -1  & -1 & 0 \\
D_{1}^{-}& 1 & -1 & 1 &  0 \\
D_{2}^+    & 0 & 1 & -1 & 0 \\
D_2^-    & -1 & 1 & 1 & 0 \\
D_{2'}   & 0 & 0 & -1 & 1 
\end{array}
\]
The only color not mapped dominantly to $G/Q_{\ref{sph4a}}(1)$ is $D_1^+$, which gives
\[
\chi_{D_{1}^+} = -\omega_{1}^{P_{\ref{sph4a}}(1)} = -\varpi_{1},
\]
and
\[
\begin{array}{lclclcl}
& & & & \chi_{D_2^-}&=&-\varpi_{1}, \\
\chi_{D_{1}^-}&=&\chi_{D_{2}^+}&=&\chi_{D_{2'}}&=&0.
\end{array}
\]

Suppose $i=1$, $m=2$, and $n=1$. Then
\[
\Sigma(G/P_{\ref{sph4a}}(1))=\{\alpha_1,\alpha_2, \alpha_1'\}
\]
and
\[
\Delta(G/P_{\ref{sph4a}}(1))=\{D_{1}^{+}, D_1^-=D_{1'}^-, D_{2}^+, D_2^-=D_{1'}^+\}
\]
with Cartan pairing
\[
\begin{array}{r|rrr}
&\alpha_1 & \alpha_2 &\alpha_1'\\
\hline
D_{1}^{+}& 1 & -1  & -1\\
D_{1}^{-}& 1 & -1 & 1\\
D_{2}^+    & 0 & 1 & -1\\
D_2^-    & -1 & 1 & 1
\end{array}
\]
We proceed exactly as in the previous case, obtaining
\[
\chi_{D_{1}^+} = -\omega_{1}^{P_{\ref{sph4a}}(1)} = -\varpi_{1},
\]
and
\[
\begin{array}{lclcl}
 & & \chi_{D_2^-}&=&-\varpi_{1}, \\
\chi_{D_{1}^-}&=&\chi_{D_{2}^+}&=&0.
\end{array}
\]

Suppose $1<i< m-1$ and $n>1$. Then
\[
\Sigma(G/P_{\ref{sph4a}}(i))=\{\alpha_1,\sigma_1=\alpha_{2,i}, \alpha_{i+1}, \sigma_2 = \alpha_{i+1}+2\alpha_{i+2,m-1}+ \alpha_m, \alpha_1',\sigma_3= \alpha'_{1}+2\alpha'_{2,n-1}+ \alpha'_n\}
\]
and
\[
\Delta(G/P_{\ref{sph4a}}(i))=\{D_{1}^{+}=D_{i+1}^+, D_1^-=D_{1'}^-, D_{2}, E_{i}, D_{i+1}^-=D_{1'}^+, D_{i+2}, D_{2'}\}
\]
with Cartan pairing
\[
\begin{array}{r|rrrrrr}
&\alpha_1 & \sigma_1 & \alpha_{i+1} &\sigma_2 &\alpha_1' & \sigma_3 \\
\hline
D_{1}^{+}& 1 & -1  & 1 & 0 & -1 & 0\\
D_{1}^{-}& 1 & 0 & -1 &  0 & 1 & 0\\
D_{2}    & -1 & 1 & 0 & 0 & 0 & 0\\
E_{i}  & 0 & 1 & -1 & -1 & 0 & 0\\
D_{i+1}^- & -1 & 0 & 1 & 0 & 1 & 0\\
D_{i+2}  & 0 & 0 & -1 & 1 & 0& 0 \\
D_{2'}  & 0 & 0 & 0 & 0 & -1& 1
\end{array}
\]

The only color not mapped dominantly to $G/Q_{\ref{sph4a}}(i)$ is $E_i$. This is obvious if $i>2$; if $i=2$ then the other possibility $D_2$ is excluded because the other colors do not form a parabolic subset.

We deduce
\[
\chi_{E_i} = -\omega_{i}^{P_{\ref{sph4a}}(i)} = -\varpi_{i},
\]
and
\[
\begin{array}{lclclclcl}
\chi_{D_1^+} &=& \chi_{D_{i+1}^-}&=&\chi_{D_{i+2}}&=&-\varpi_{i}, \\
\chi_{D_1^-} &=& \chi_{D_{2}}&=&\chi_{D_{2'}}&=&0.
\end{array}
\]

Suppose $1<i<m-1$ and $n=1$. Then
\[
\Sigma(G/P_{\ref{sph4a}}(i))=\{\alpha_1,\sigma_1=\alpha_{2,i}, \alpha_{i+1}, \sigma_2 = \alpha_{i+1}+2\alpha_{i+2,m-1}+ \alpha_m, \alpha_1' \}
\]
and
\[
\Delta(G/P_{\ref{sph4a}}(i))=\{D_{1}^{+}=D_{i+1}^+, D_1^-=D_{1'}^-, D_{2}, E_{i}, D_{i+1}^-=D_{1'}^+, D_{i+2}\}
\]
with Cartan pairing
\[
\begin{array}{r|rrrrr}
&\alpha_1 & \sigma_1 & \alpha_{i+1} &\sigma_2 &\alpha_1' \\
\hline
D_{1}^{+}& 1 & -1  & 1 & 0 & -1 \\
D_{1}^{-}& 1 & 0 & -1 &  0 & 1 \\
D_{2}    & -1 & 1 & 0 & 0 & 0 \\
E_{i}  & 0 & 1 & -1 & -1 & 0 \\
D_{i+1}^- & -1 & 0 & 1 & 0 & 1 \\
D_{i+2}  & 0 & 0 & -1 & 1 & 0
\end{array}
\]

We proceed exactly as in the previous case, obtaining
\[
\chi_{E_i} = -\omega_{i}^{P_{\ref{sph4a}}(i)} = -\varpi_{i},
\]
and
\[
\begin{array}{lclclclcl}
\chi_{D_1^+} &=& \chi_{D_{i+1}^-}&=&\chi_{D_{i+2}}&=&-\varpi_{i}, \\
& & \chi_{D_1^-} &=& \chi_{D_{2}}&=&0.
\end{array}
\]

Suppose $i=1$, $m\geq 3$, and $n>1$. Then
\[
\Sigma(G/P_{\ref{sph4a}}(i))=\{\alpha_1,\alpha_{2}, \sigma_1 = \alpha_{2}+2\alpha_{3,m-1}+ \alpha_m, \alpha_1', \sigma_2 = \alpha'_{1}+2\alpha'_{2,n-1}+ \alpha'_n\}
\]
and
\[
\Delta(G/P_{\ref{sph4a}}(i))=\{D_{1}^{+}=D_{1'}^+, D_1^-, D_{2}^+, D_{2}^-=D_{1'}^-,D_3, D_{2'}\}
\]
with Cartan pairing
\[
\begin{array}{r|rrrrr}
&\alpha_1 & \alpha_{2} & \sigma_1  &\alpha_1' &\sigma_2 \\
\hline
D_{1}^{+}& 1 & -1  & 0 & 1 & 0 \\
D_{1}^{-}& 1 & 0 & -1 &  -1 & 0\\
D_{2}^{+}& 0 & 1  & 0 & -1 & 0 \\
D_{2}^{-}& -1 & 1 & 0 & 1& 0 \\
D_{3} & 0 & -1 & 1 & 0& 0 \\
D_{2'} & 0 & 0 & 0 & -1& 1
\end{array}
\]

The only color not mapped dominantly to $G/Q_{\ref{sph4a}}(i)$ is $D_1^-$. This yields
\[
\chi_{D_1^-} = -\omega_{1}^{P_{\ref{sph4a}}(i)} = -\varpi_{1},
\]
and
\[
\begin{array}{lclclclcl}
& &  \chi_{D_{2}^-}&=&\chi_{D_{3}}&=&-\varpi_{1}, \\
\chi_{D_1^+} &=& \chi_{D_{2}^+}&=&\chi_{D_{2'}}&=&0.
\end{array}
\]

It remains the case $i=1$, $m\geq 3$, and $n=1$. Then
\[
\Sigma(G/P_{\ref{sph4a}}(i))=\{\alpha_1,\alpha_{2}, \sigma = \alpha_{2}+2\alpha_{3,m-1}+ \alpha_m, \alpha_1'\}
\]
and
\[
\Delta(G/P_{\ref{sph4a}}(i))=\{D_{1}^{+}=D_{1'}^+, D_1^-, D_{2}^+, D_{2}^-=D_{1'}^-,D_3\}
\]
with Cartan pairing
\[
\begin{array}{r|rrrr}
&\alpha_1 & \alpha_{2} & \sigma  &\alpha_1' \\
\hline
D_{1}^{+}& 1 & -1  & 0 & 1 \\
D_{1}^{-}& 1 & 0 & -1 &  -1\\
D_{2}^{+}& 0 & 1  & 0 & -1 \\
D_{2}^{-}& -1 & 1 & 0 & 1\\
D_{3} & 0 & -1 & 1 & 0
\end{array}
\]
We proceed exactly as in the previous case, obtaining
\[
\chi_{D_1^-} = -\omega_{1}^{P_{\ref{sph4a}}(i)} = -\varpi_{1},
\]
and
\[
\begin{array}{lclclclcl}
& &  \chi_{D_{2}^-}&=&\chi_{D_{3}}&=&-\varpi_{1}, \\
& & \chi_{D_1^+} &=& \chi_{D_{2}^+}&=&0.
\end{array}
\]
\subsubsection{$I=\{\beta_1'\}$}\label{sph6bS}
We choose now a different embedding of $H$ into $G$. Let $V'\cong\CC^2\oplus\CC^2$ be the subspace generated by the first and last elements of the canonical basis of $\CC^{2m}$, and by the first and last elements of the canonical basis of $\CC^{2n}$. We choose $H$ to be inside the stabilizer of $V'$ in $G$, so that $H$ contains the diagonal subgroup of $\SL(2)\times\SL(2)$ (embedded in $\GL(V')$ in the obvious way).

We recall that, with this choice and up to restricting weights from $T_G$ to $T_H$, we have $\omega_1=\omega'_1=\varpi'_1$, $\omega_j=\varpi'_1+\varpi_{j-1}$ for all $j\in\{2,\ldots,m\}$, and $\omega'_j=\varpi'_1+\varpi''_{j-1}$ for all $j\in\{2,\ldots,n\}$.

In this case the subgroup $Q_{\ref{sph4b}}$ is such that its Levi subgroup has all simple roots except for $\alpha_1$ and $\alpha'_1$. To check that $G/P_{\ref{sph4b}}$ has the spherical roots and colors indicated below, one proceeds as for $G/P_{\ref{sph4a}}(i)$.

Suppose that $m,n>1$. Then we have
\[
\Sigma(G/P)=\{\alpha_1,\sigma=\alpha_1+2\alpha_{2,m-1}+\alpha_m,\alpha'_1, \sigma'=\alpha'_1+2\alpha'_{2,n-1}+\alpha'_n \}
\]
and
\[
\Delta(G/P)=\{D^+_1=D^+_{1'}, D_1^-,D_{1'}^-, D_2,D_{2'}\}
\]
with Cartan pairing
\[
\begin{array}{r|rrrr}& \alpha_1 & \sigma & \alpha_1'& \sigma'\\
\hline
D_{1}^{+}&1&0 & 1 & 0\\
D_{1}^{-}&1&0& -1 & 0\\
D_{1'}^{-}&-1&0& 1 & 0\\
D_{2}&-1&1& 0 & 0\\
D_{2'}&0&0& -1 & 1
\end{array}
\]
The colors not mapped dominantly to $G/Q_{\ref{sph4b}}$ are $D_1^-$ and $D_{1'}^-$. Therefore
\[
\begin{array}{lclcl}
\chi_{D_1^-} &=& -\omega_{1}^{P_{\ref{sph4b}}} &=& -\varpi'_{1},\\
\chi_{D_{1'}^-} &=& -(\omega'_{1})^{P_{\ref{sph4b}}} &=& -\varpi'_{1},\\
\end{array}
\]
and
\[
\chi_{D_{1}^+}=\chi_{D_{2}}=\chi_{D_{2'}}=0.
\]

Suppose now $m>n=1$. Then we have
\[
\Sigma(G/P)=\{\alpha_1,\sigma=\alpha_1+2\alpha_{2,m-1}+\alpha_m,\alpha'_1 \}
\]
and
\[
\Delta(G/P)=\{D^+_1=D^+_{1'}, D_1^-,D_{1'}^-, D_2\}
\]
with Cartan pairing
\[
\begin{array}{r|rrr}& \alpha_1 & \sigma & \alpha_1'\\
\hline
D_{1}^{+}&1&0 & 1 \\
D_{1}^{-}&1&0& -1 \\
D_{1'}^{-}&-1&0& 1 \\
D_{2}&-1&1& 0
\end{array}
\]
We proceed exactly as in the previous case, obtaining
\[
\begin{array}{lclcl}
\chi_{D_1^-} &=& -\omega_{1}^{P_{\ref{sph4b}}} &=& -\varpi'_{1},\\
\chi_{D_{1'}^-} &=& -(\omega'_{1})^{P_{\ref{sph4b}}} &=& -\varpi'_{1},\\
\end{array}
\]
and
\[
\chi_{D_{1}^+}=\chi_{D_{2}}=0.
\]

If $n>m=1$, then the computation is the same as the previous one (with $n$ and $m$ swapped), and we get
\[
\Delta(G/P)=\{D^+_1=D^+_{1'}, D_1^-,D_{1'}^-, D_{2'}\}
\]
with
\[
\begin{array}{lclcl}
\chi_{D_1^-} &=& -\omega_{1}^{P_{\ref{sph4b}}} &=& -\varpi'_{1},\\
\chi_{D_{1'}^-} &=& -(\omega'_{1})^{P_{\ref{sph4b}}} &=& -\varpi'_{1},\\
\end{array}
\]
and
\[
\chi_{D_{1}^+}=\chi_{D_{2'}}=0.
\]

\bibliographystyle{plain}
\bibliography{bibfileMVP}{}

\end{document}